\documentclass[10pt,a4paper]{amsart}

\usepackage{fullpage}

\usepackage{setspace}
\usepackage{color}
\usepackage{quoting}
\usepackage{dutchcal}
\usepackage{amsmath}
\usepackage{leftidx}
\usepackage{amsthm}
\usepackage{amssymb}
\usepackage{algorithm}
\usepackage{algpseudocode}
\usepackage{mathtools}
\usepackage[breaklinks]{hyperref}
\usepackage{cleveref}
\usepackage{xstring}
\usepackage{pdfsync}
\usepackage{caption}
\usepackage{subcaption}
\usepackage{lipsum}
\usepackage{amsfonts}
\usepackage{graphicx}

\usepackage{etoolbox}
\usepackage[normalem]{ulem}

\usepackage[backend=biber, style=numeric, sorting=nyt]{biblatex}

\addbibresource{main.bib}

\DeclarePairedDelimiter\floor{\lfloor}{\rfloor}%
\DeclarePairedDelimiter\abs{\lvert}{\rvert}%
\makeatletter
\let\oldabs\abs
\def\abs{\@ifstar{\oldabs}{\oldabs*}}

\newtheorem{theorem}{Theorem}
\newtheorem{definition}{Definition}
\newtheorem{proposition}[definition]{Proposition}
\newtheorem{lemma}{Lemma}

\crefname{lemma}{Lemma}{Lemmas}
\crefname{proposition}{Proposition}{Propositions}

\numberwithin{equation}{section}
\numberwithin{lemma}{section}

\newcommand{\subtitle}[1]{%
  \posttitle{%
    \vskip3em
    \par\end{center}
    \begin{center}\LARGE#1\end{center}
    \vskip0.5em}%
}

\newcommand\newpar{\vspace{5mm}\newline}
\newcommand\normalizer{\frac{1}{2\pi}}
\newcommand\oyInZ{\oy\in\mathbb{Z}}
\newcommand\rgamgam{\mathcal{R}_{\Gamma|\gamma}}
\newcommand\h{\mathcal{H}}
\newcommand{\hl}[1][l]{\mathcal{H}_{#1}}
\newcommand\oy{\omega_{y}}
\newcommand\dxi{d_{\xi}}
\newcommand\doy{d_{\omega_{y}}}
\newcommand\psiOy{\psi_{\oy}}

\newcommand\phiOy{\phi_{\oy}}
\newcommand\gammaOy{\gamma_{\oy}}
\newcommand\mxi{M_{\xi}}

\newcommand\ckPsiOy{c_{k}(\psi_{\oy})}

\newcommand\Aoyl{A_{l}^{\psi}(\oy)}
\newcommand{\Aoylarg}[1][l]{A_{#1}^{\psi}(\oy)}
\newcommand\Vpsi{V_{\psi, l}}
\newcommand\PCoy{PC^{(d+1, 1)}_{\mathbb{T}}}

\newcommand{\gkma}[3][k]{g_{#1,#2,#3}(x)}
\newcommand\ox{\omega_{x}}
\newcommand\C{C^{(d+1)}_{\mathbb{T}}}

\newcommand\PC{PC^{(d+1, 1)}_{\mathbb{T}}}
\newcommand\Fx{F_{x}}
\newcommand\Admxi{A_{d,\mxi}}
\newcommand{\Al}[1][l]{A_{#1}(x)}
\newcommand\moyx{m_{\oy}(x)}
\newcommand\xix{\xi(x)}
\newcommand\yx{y_{x}}
\newcommand\epsOyx{\varepsilon_{\oy}(x)}
\newcommand\kappax{\kappa_{x}}
\newcommand\gammax{\Gamma_{x}}
\newcommand\Phidx{\Phi_{d,x}}
\newcommand\Moy{M_{\oy}}

\newcommand\cOyFx{c_{\oy}(\Fx)}

\newcommand\ddx{\frac{d}{dx}}
\newcommand\ppx{\frac{\partial}{\partial x}}
\newcommand{\ddxk}[1][k]{\frac{d^{#1}}{dx^{#1}}}
\newcommand{\ddyk}[1][k]{\frac{d^{#1}}{dy^{#1}}}
\newcommand\dkdxk{\frac{d^{k}}{dx^{k}}}
\newcommand\pkpxk{\frac{\partial^{k}}{\partial x^{k}}}

\newcommand\delOyx{\delta_{\oy}(x)}

\newcommand\mOxOy{m_{\ox}(\oy)}
\newcommand\delOxOy{\delta_{\ox}(\oy)}

\newcommand\cosoyxi{\cos(\oy\xix)}
\newcommand\sinoyxi{\sin(\oy\xix)}
\newcommand\xii[1][i]{\xi_{#1}}
\newcommand\oypm{\omega_{\pm,y}}
\newcommand\oyapm[1][a]{\omega_{\pm,y}^{#1}}

\newcommand\trivec[1][k]{\vec{\triangle}_{#1}}
\newcommand\omegavec[1][k]{\vec{\Omega}_{#1}}
\newcommand{\trilm}[2][l]{\triangle_{#1}^{(#2)}}

\newcommand{\ppxk}[1][k]{%
    \IfEqCase{#1}{%
            {1}{\ppx F(x,y) dy}%
            {k}{\pkpxk F(x,y) dy}}%
    [\frac{\partial^{#1}}{\partial x^{#1}}F(x,y) dy]}

\newcommand{\ik}[1][k]{I_{\oy, #1}(x)}
\newcommand\fcs{f_{c\vert s}(x)}
\newcommand\fcsk[1][k]{f_{c\vert s}^{(#1)}(x)}
\newcommand{\torus}[1][2]{%
    \IfEqCase{#1}{%
            {1}{\mathbb{T}}}%
    [\mathbb{T}^{#1}]}
\newcommand{\LIK}[1][k]{%
    \IfEqCase{#1}{%
            {0}{\int_{-\pi}^{\xi(x)}{e^{-\imath y \oy}F(x,y) dy}}}%
    [\int_{-\pi}^{\xi(x)}{e^{-\imath y \oy}\ppxk[#1]}]}
\newcommand{\RIK}[1][k]{%
    \IfEqCase{#1}{%
            {0}{\int_{\xi(x)}^{\pi}{e^{-\imath y \oy}F(x,y) dy}}}%
    [\int_{\xi(x)}^{\pi}{e^{-\imath y \oy}\ppxk[#1]}]}

\newcounter{parnum}

\begin{document}

\title{Algebraic Reconstruction of Piecewise-Smooth Functions of Two Variables from Fourier Data}

\author{Michael Levinov}
\address{Department of Applied Mathematics, Tel Aviv University, Tel Aviv, Israel.}

\author{Yosef Yomdin}
\address{Department of Mathematics, Weizmann Institute of Science, Rehovot, Israel.}

\author{Dmitry Batenkov}
\address{Department of Applied Mathematics, Tel Aviv University, Tel Aviv, Israel.}
\curraddr{Basis Research Institute, NYC, USA.}
\email{dima.batenkov@gmail.com}
\thanks{Corresponding author: D.~Batenkov}

\keywords{Fourier analysis, piecewise smooth functions, edge detection, super-resolution, inverse problems}

\subjclass[2010]{Primary 65T40; Secondary 65D15}

\begin{abstract}
    We investigate the problem of reconstructing a 2D piecewise smooth function from its bandlimited Fourier measurements.
    This is a well known and well studied problem with many real world implications, in particular in medical imaging.  While many techniques have been proposed over the years to solve the problem, very few consider the accurate reconstruction of the discontinuities themselves.

    In this work we develop an algebraic reconstruction technique for two-dimensional functions consisting of two continuity pieces with a smooth discontinuity curve. By extending our earlier one-dimensional method, we show that both the discontinuity curve and the function itself can be reconstructed with high accuracy from a finite number of Fourier measurements. The accuracy is commensurate with the smoothness of the pieces and the discontinuity curve. We also provide a numerical implementation of the method and demonstrate its performance on synthetic data.
\end{abstract}

\maketitle

\section{Introduction}\label{sec:introduction alt}
In this work we revisit the classical problem of approximating a piecewise regular function
$F:\torus\to\mathbb{R}$ from Fourier data:
\begin{gather}\label{eq:fourier-data}
    \widehat{F}(\ox,\oy)\coloneqq \frac{1}{4\pi^{2}}\int_{-\pi}^{\pi} \int_{-\pi}^{\pi} F(x,y) e^{-\imath \ox x} e^{-\imath \oy y}\,dx\,dy\quad, \abs{\ox}\leq M,\;\abs{\oy}\leq N,
\end{gather}
where $\torus[2]$ is the 2D periodic torus $[-\pi,\pi)^2$.
The presence of discontinuities hinders the otherwise fast convergence of the Fourier series of $F$, while also introducing the \textit{Gibbs phenomenon}~\cite{gibbs1898fourier}. Both issues have very serious implications, e.g. when using spectral methods to calculate solutions of PDEs with
shocks~\cite{gottlieb1977numerical} or when  approximating sharp edges, e.g. boundaries of tissues using Magnetic
Resonance (MR) images
obtained via the inverse Fourier transformation~\cite{veraart2016gibbs}.
Therefore, an important questions arises:
\begin{quoting}
    \textit{Can piecewise smooth functions be reconstructed from their Fourier measurements with accuracy comparable to
    their smooth counterparts?}
\end{quoting}

In the rest of the introduction, we first establish some notation (\cref{subsec:problem setting}), then present our main findings (\cref{subsec:summary of our results}), and finally compare our results with prior works (\cref{subsec:closely related work}).

\subsection{Problem setting}\label{subsec:problem setting}

In our model we will develop a super-resolving technique (from Fourier measurements) for a piecewise-smooth 2D function
$F:\torus\to\mathbb{R}$ with only two continuity pieces (as described in~\cref{fig:graph_for_algebraic reconstruction method_02})
such that the boundary curve can be represented by the graph of a smooth function $y = \xi(x)$
(as described in~\cref{fig:graph_for_algebraic reconstruction method_01}).
\begin{figure}[ht]
    \centering
    \begin{subfigure}{0.55\linewidth}
        \centering
        \includegraphics[width=.99\linewidth]{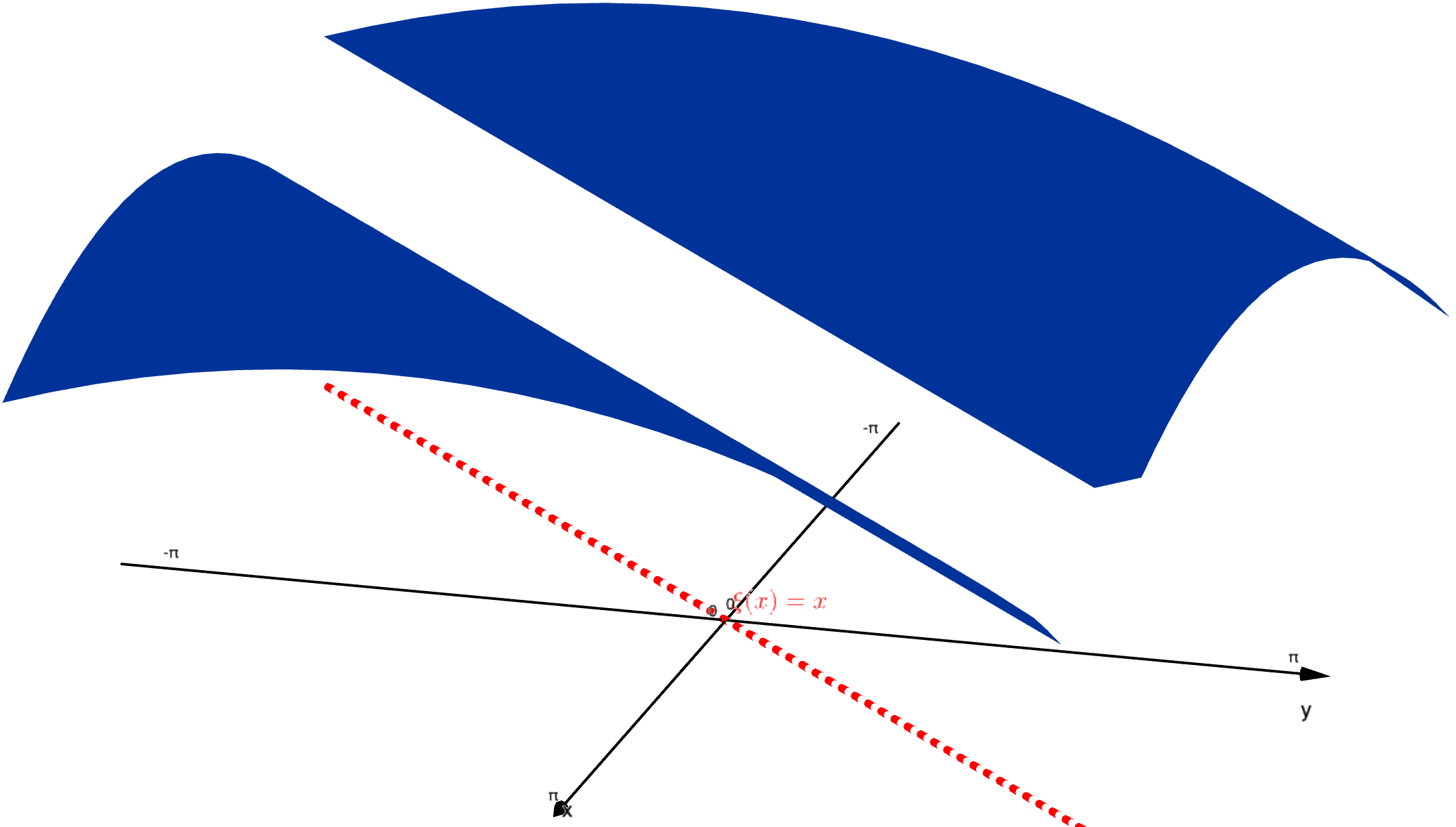}
        \caption{}
        \label{fig:graph_for_algebraic reconstruction method_02}
    \end{subfigure}
    \begin{subfigure}{.4\linewidth}
        \centering
        \includegraphics[width=.99\linewidth]{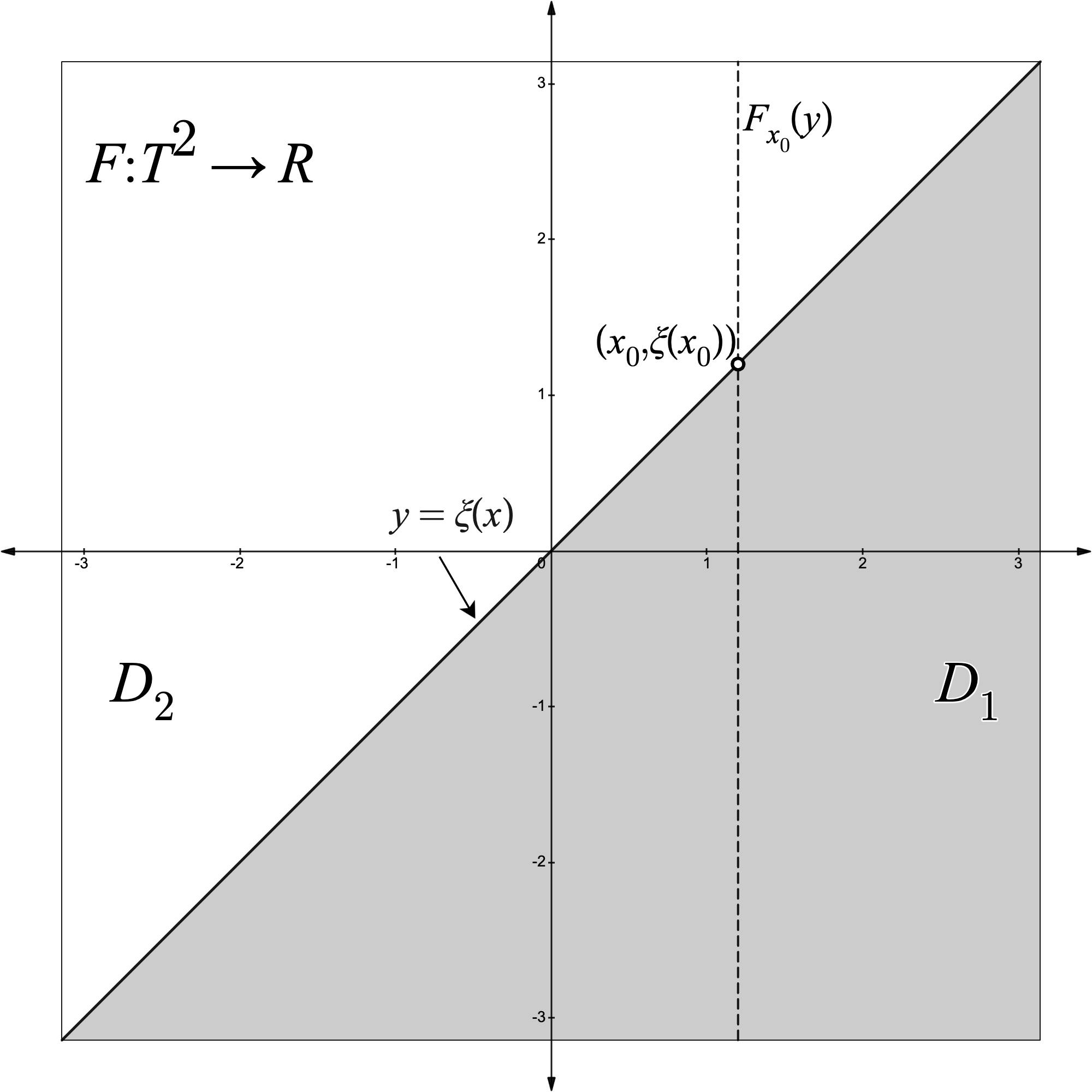}
        \caption{}
        \label{fig:graph_for_algebraic reconstruction method_01}
    \end{subfigure}
    \caption{Our model of the piecewise-smooth 2D function $F:\torus\to\mathbb{R}$ with two continuity pieces. \eqref{fig:graph_for_algebraic reconstruction method_02} The graph of $F$. \eqref{fig:graph_for_algebraic reconstruction method_01} The continuity domains $D_1,D_2$ and and the jump curve $\Sigma$.}
    \label{fig:graph_for_algebraic reconstruction method}
\end{figure}

\begin{definition}
    \label{C-continuity-def}
    Let $C^{(d+1)}_{I}$ denote the class of continuous functions having $d$-continuous derivatives in $I\subseteq\torus[1]$, such that $f^{(d+1)}$
    is piecewise-continuous and piecewise-differentiable in $I$.
\end{definition}

\begin{definition}
    \label{PC-continuity-def}
    Let $PC^{(d+1, K)}_{I}$ denote the class of functions $f$ with $K$ discontinuity points,
    $-\pi\leq\xi_{1} < \xi_{2} < \ldots < \xi_{K} < \pi$ s.t. $f \in C^{(d+1)}_{[\xi_{i}, \xi_{
        i+1}]}$ for $1 \leq i \leq K-1$.
\end{definition}

\begin{definition}
    \label{description-of-F-and-xi}
    Let $F:\torus \rightarrow \mathbb{R}$. For each $x\in\mathbb{T}$ let $\Fx: \mathbb{T} \to\mathbb{R}$ denote the ``slice'' $F_x(y) \coloneqq F(x,y)$,
    and assume that for each $x\in\mathbb{T}$ we have $F_x \in\PC$.
    Also denote the smooth boundary curve, $\Sigma$, as a function of $x$ by:
    \begin{equation}\label{eq:boundary-curve}
       \Sigma = \{\left(x,y\right): y = \xi(x),\; x\in\mathbb{T} \}
    \end{equation}
    and assume that $\xi(x) \in C_{(-\pi,\pi)}^{\dxi}$ for some $\dxi\in\mathbb{N}$, $\dxi\geq d$.\newline

    Now we define the two continuity pieces of $F$ and their domains:
    \begin{equation} \label{eq: continuity-pieces-F}
        \begin{split}
            F_{1} &:D_{1}\rightarrow\mathbb{R},\text{ where } D_{1} \coloneqq \left\{ (x,y)\;\Big|\;-\pi\leq y\leq \xix \right\}\\
            F_{2} &:D_{2}\rightarrow\mathbb{R},\text{ where } D_{2} \coloneqq \left\{ (x,y)\;\Big|\; \xix < y < \pi \right\}.
        \end{split}
    \end{equation}
    Our function $F$ can therefore be written as follows:
    \begin{equation}
        \label{eq:2.3}
        F(x,y) =
        \begin{cases}
            F_1(x,y) & \text{if } \left( x, y \right) \in D_1 \\
            F_2(x,y) & \text{if } \left( x, y \right) \in \torus \setminus D_1.
        \end{cases}
    \end{equation}
\end{definition}

\noindent{\bf Remark:} note that the assumptions above imply that each slice $F_x$ has a single jump at $y_x=\xi(x)$, and is otherwise periodic and smooth. This assumption can be relaxed so that $F_x$ has also a jump at the endpoints, but we will not consider this case here. 

In this paper, we present a technique that, given any $x\in\torus[1]$, reconstructs the slice $F_{x}(y)$ by approximating the jump location $y_x=\xi(x)$ and the jump magnitudes of $\frac{\partial^k}{\partial y^k} F_x(y)$ at $y_x$
(see~\cref{Fx-jump-mag-def} below) for $0\leq k \leq
d_r$ where $d_r\leq d$ is the chosen reconstruction order.
Our proposed method utilizes and combines the two methods described in~\cite{batenkov2012algebraic, batenkov2015complete} (which will be presented and analyzed going forward).

\subsection{Summary of our results}\label{subsec:summary of our results}

To state our results, let us first establish some additional notation related to our model.

\begin{definition}
    \label{definition-of-Fx-and-jump-location-and-jump-magnitudes-and-assumptions}
    Let $F:\torus \rightarrow \mathbb{R}$ with $F_x$ as described in~\cref{description-of-F-and-xi} where
    $x\in\mathbb{T}$, and let $A_{l}(x)$ denote the jump magnitude of $\ddyk[l]\left( \Fx \right)$
    at $y_x\coloneqq \xi(x)$:
    \begin{gather}\label{Fx-jump-mag-def}
        A_{l}(x) \coloneqq \lim_{y \to y_{x}^{+}} {\frac{\partial^l}{\partial y^l}\Fx(y)}
        - \lim_{y \to y_{x}^{-}}{\frac{\partial^l}{\partial y^l}\Fx(y)}.
    \end{gather}
    We further denote:
    \begin{itemize}
        \item $M_{\xi} := \sup\limits_{x\in\torus[1]} \left\{ \abs{\frac{d^l}{dx^{l}}\xi(x)}\Bigg|
        \label{bound-for-xi} l=0,\ldots,d \right\}<\infty$.
        \item $B_{F} := \sup\limits_{y\in\torus[1]\setminus\left\{ \xix \right\}}\left\{\abs{\frac{\partial^{k+1}}
        {\partial x^{k+1}}F_x(y)} \Bigg| k=0, \dots, d+1 \right\}<\infty$, for all $x\in\torus[1]$.
        \label{upper-bound-partial-derivatives-fx}
        \item $A_{x} := \max\limits_{l=0, \ldots, d+1 }\left\{\abs{A_{l}(x)}\right\} < \infty$.
        \label{upper-bound-jump-magnitudes-fx}
    \end{itemize}
\end{definition}

We next define the $\psiOy$ function, which is an essential ingredient in our work. 

\begin{definition}\label{def:psi-oy}
    Let $F:\torus[2]\to\mathbb{R}$ be an unknown 2D piecewise-smooth function as defined in~\cref{description-of-F-and-xi}.
    Assume that the given data are the $\left(2M+1 \right) \times \left( 2N+1 \right)$ Fourier coefficients of $F$. For each $\abs{\oy}\leq N$ we define
    \begin{align}\label{def-of-psi-omega-y}
        \begin{split}
        \psiOy(x) &\coloneqq \widehat{F_x}(\oy) = \frac{1}{2\pi} \int_{-\pi}^{\pi} e^{-\imath y\oy} F(x,y)\, dy \\ 
        &= \frac{1}{2\pi} \left(\int_{-\pi}^{\xi(x)} e^{-\imath y\oy} F_1(x,y)\, dy +
        \int_{\xi(x)}^{\pi} e^{-\imath y\oy} F_2(x,y)\,dy \right)
        \end{split}
    \end{align}
    where $x\in \torus[1]$.
\end{definition}

\begin{lemma}\label{psi-oy-properties}
Let $\xi$ as in~\cref{eq:boundary-curve} s.t. $\xi\in C_{(-\pi,\pi)}^{d_{\xi}}$ where $d_{\xi}\geq d$,
then for each $\oyInZ$, $\psiOy$ satisfies the following properties:
    \begin{itemize}
        \item $\psiOy\in \PCoy$.
        \item $\psiOy$ and its first $d$ derivatives have a (single) jump discontinuity at $x=-\pi$.
    \end{itemize}  
\end{lemma}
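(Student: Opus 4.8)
The plan is to read both assertions directly off the integral representation~\eqref{def-of-psi-omega-y}. Write $\psiOy=\normalizer\,(L+R)$, where $L(x)=\int_{-\pi}^{\xix}e^{-\imath y\oy}F_1(x,y)\,dy$ and $R(x)=\int_{\xix}^{\pi}e^{-\imath y\oy}F_2(x,y)\,dy$. The key observation is that the two fixed limits $\pm\pi$ never produce boundary terms under $\ddx$, so every ``edge contribution'' to a derivative of $\psiOy$ originates from the moving limit $y=\xix$. For $x$ in the open interval $(-\pi,\pi)$ these contributions inherit the regularity of $\xi$, $F_1$, $F_2$ and are harmless; at the seam $x=-\pi$ the one-sided limits need not agree, since $\xi$ is assumed smooth only on $(-\pi,\pi)$ and in general $\xi((-\pi)^{+})\ne\xi(\pi^{-})$.

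For the interior I would differentiate under the integral sign by the Leibniz rule and induct on $k$. A higher-order Leibniz-rule calculation shows that, for $0\le k\le d$,
\[
  \ddxk L(x)=\int_{-\pi}^{\xix}e^{-\imath y\oy}\,\partial_x^{k}F_1(x,y)\,dy
  \;+\;\sum_{\alpha}c_{\alpha}\,e^{-\imath\oy\xix}\,\Bigl(\prod_{i}\xi^{(k_i)}(x)\Bigr)\bigl(\partial_x^{a}\partial_y^{b}F_1\bigr)(x,\xix),
\]
a finite sum over tuples $(k_1,\dots,k_m,a,b)$ with $k_i\ge1$, $k_1+\dots+k_m+a+b=k$, and constants $c_{\alpha}$ depending on $\oy$ but not on $x$, in which the unique term carrying a $k$-th derivative of $\xi$ is $e^{-\imath\oy\xix}F_1(x,\xix)\,\xi^{(k)}(x)$; the same holds for $R$ with $F_1$ replaced by $F_2$. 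Since $\xi\in C^{\dxi}_{(-\pi,\pi)}$ with $\dxi\ge d$, each $\xi^{(j)}$ with $j\le d$ is continuous there, and since $F_1,F_2$ are smooth up to the boundary curve $\Sigma$ of their domains and are bounded together with their $x$-derivatives ($B_F<\infty$), every summand above is continuous on $(-\pi,\pi)$; hence $\psiOy$ has $d$ continuous derivatives there. Differentiating once more, $\ddxk[d+1]\psiOy$ is assembled from $\partial_x^{d+1}F_i$ and from $\xi^{(d+1)}$, which are merely piecewise continuous and piecewise differentiable, so $\ddxk[d+1]\psiOy$ is of the same type; thus $\psiOy|_{(-\pi,\pi)}\in C^{(d+1)}_{(-\pi,\pi)}$, and, the boundary sums being continuous for every $k\le d$, $\psiOy$ has no discontinuity in the interior.

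It remains to handle $x=-\pi$. By the uniform bounds $\mxi<\infty$ and $B_F<\infty$, each $\xi^{(j)}$ with $j\le d$ and each boundary jet $\partial_x^{a}\partial_y^{b}F_i(x,\xix)$ extends continuously to $x\to(-\pi)^{+}$ and $x\to\pi^{-}$, and so do the integrals appearing above; hence $\ddxk\psiOy$ for $0\le k\le d$ has one-sided limits at $x=-\pi$. This already gives $\psiOy\in\PCoy$ with its single discontinuity placed at $x=-\pi$, and likewise for $\psiOy',\dots,\psiOy^{(d)}$. That these are genuine jumps in general follows by passing to the one-sided limits in $L$ and $R$: because typically $\xi((-\pi)^{+})\ne\xi(\pi^{-})$, the jump $\psiOy((-\pi)^{+})-\psiOy(\pi^{-})$ equals an integral of $e^{-\imath y\oy}\bigl(F_1-F_2\bigr)(-\pi,y)$ over the interval between $\xi(\pi^{-})$ and $\xi((-\pi)^{+})$ (up to any periodicity defect of the $F_i$ in $x$), and differentiating first and then taking limits yields the jumps of the derivatives.

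The main obstacle, and the only genuinely technical point, is the combinatorial bookkeeping of the iterated Leibniz rule: one must verify that the $k$-th derivative produces exactly one term involving $\xi^{(k)}$ while every other term involves only $\xi$-derivatives of order $<k$ and $F_i$-derivatives of total order $\le k$ — this is precisely what makes the hypothesis $\dxi\ge d$ (rather than something larger) sufficient, and what controls $\psiOy^{(d+1)}$. A conceptually cleaner alternative, at the cost of importing more machinery, is to decompose each slice as $\Fx=S_x+\sum_{l=0}^{d}\Al[l]\,V_l(\cdot-\xix)$, where $V_l$ is a fixed $2\pi$-periodic function whose $l$-th derivative has a unit jump at the origin; then $\psiOy(x)=\widehat{S_x}(\oy)+\sum_{l}\Al[l]\,\widehat{V_l}(\oy)\,e^{-\imath\oy\xix}$, the first term being smooth in $x$ on $(-\pi,\pi)$ once one checks that $S$ is jointly smooth and $2\pi$-periodic in $y$, and the finite sum inheriting the regularity of $\xi$ and of the jump magnitudes $\Al[l]$, so that the statement reduces to the one-dimensional theory of~\cite{batenkov2012algebraic,batenkov2015complete} applied in the $x$-variable.
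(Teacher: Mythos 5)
Your proposal is correct and follows essentially the same route as the paper: differentiate under the integral sign with the Leibniz rule for the moving limit $y=\xi(x)$, observe that all boundary contributions are built from derivatives of $\xi$ of order $\le k$ and traces of $F_1,F_2$ on the curve (the paper packages these as derivatives of $e^{-\imath\oy\xi(x)}\xi^{(1)}(x)A_{m}(x)$ and expands them in \cref{lem:derivative-of-gc-gs,lem:fcs-derivatives}), and conclude smoothness on $(-\pi,\pi)$ with an at-most-single singularity at $x=-\pi$, the genuineness of the jump being illustrated exactly as in \cref{case-for-discontinuity-for-psi}. The Bernoulli-type decomposition you sketch at the end is a viable alternative but is not the paper's argument for this lemma.
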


The proof of \cref{psi-oy-properties} is presented in the appendix. For further developments, we denote by $\Aoyl$  the jump magnitude of
    $\ddxk[l]\left( \psiOy \right)$ at $x=-\pi$:
    \begin{gather}\label{psi-oy-jump-mag-def}
        \Aoyl \coloneqq \psiOy^{(l)}(-\pi) - \psiOy^{(l)}(\pi).
    \end{gather}

The crux of our method is the following.
In~\cite{batenkov2012algebraic, batenkov2015complete} it was shown that a
piecewise-smooth 1D function $f\in PC^{(d+1, K)}_{I}$ can be reconstructued from its Fourier coefficients with high
accuracy by a certain ``algebraic'' reconstruction procedure (\cref{alg:full-order-algorithm}). Our idea is to use
this 1D method for each $x\in \torus[1]$ to approximate $F_x \in\PC$, and for doing so, we need an accurate approximation of
$\widehat{F}_x(\oy)\equiv \psiOy(x)$.
For this latter task, \emph{we can again use the 1D method}, provided sufficiently many Fourier coefficients of
$\psiOy\in \PCoy$.
However, those are given by our data
\begin{equation}\label{psi-oy-fourier-coefficients}
    \widehat{F}(\omega_{x}, \omega_{y})\equiv \widehat{\psi_{\omega_{y}}}(\omega_{x}).
\end{equation}    

To see why \eqref{psi-oy-fourier-coefficients} is true, consider the definition
\begin{gather*}
    \widehat{F}\left( \ox, \oy \right)=
        \frac{1}{4\pi^{2}}\int_{-\pi}^{\pi}{\int_{-\pi}^{\pi}}{F(x,y)e^{-\imath x \ox}e^{-\imath y \oy}}\,dy\,dx.
\end{gather*}
Using Fubini's theorem \cite{fubini1907sugli} and \eqref{def-of-psi-omega-y} we have
\begin{align*}
    \widehat{F}\left( \ox, \oy \right)&= \frac{1}{2\pi}\int_{-\pi}^{\pi}{e^{-\imath x \ox}}\frac{1}{2\pi} \int_{\pi}^{\pi} e^{-\imath y \oy} F(x,y) dy dx\\
    &= \frac{1}{2\pi}\int_{-\pi}^{\pi}{\psiOy(x) e^{-\imath x \ox}}\,dx
    =\widehat{\psi_{\omega_{y}}}(\omega_{x}).
\end{align*}

\vskip 1em
\noindent{\bf Outline of method:}
For a function $F:\torus[2]\to\mathbb{R}$ as in \cref{description-of-F-and-xi} we require its $\left( 2M+1 \right) \times \left( 2N+1\right)$
Fourier coefficients as the input data.
Then for every $\abs{\oy}\leq N$ we approximate $\psiOy$
(see~\cref{def-of-psi-omega-y}) using $\left\{ \widehat{F}(\ox,\oy) \right\}_{\abs{\ox}\leq{M}}$ as the input
for our adaptation of the algorithms described in~\cite{batenkov2012algebraic,batenkov2015complete} which will
yield an approximation $\widetilde{\psiOy}$ for $\Fx$'s Fourier coefficients at $x\in\torus[1]$.
Next we take this approximation of $\left\{ \widehat{\Fx}(\oy) \right\}_{\abs{\oy}\leq N}$ and
use it as an input for our second part of the two-part algorithm to obtain an approximation $\widetilde{F_x}$ to the \textit{slice} $\Fx$.  The entire algorithm is presented in~\cref{alg:2D reconstruction} below.


Our main theoretical result provides upper bounds on the accuracy of recovering each $F_x$, including the position of the
discontinuity curve $\Sigma$ and the magnitudes $A_{l}(x)$ of the jump discontinuities of $\frac{\partial^l}{\partial y^l}F_x$ at $y=\xi(x)$,
as well as the pointwise accuracy of approximating $F_x$ for each $x\in\torus[1]$, using the method outlined above.

\begin{theorem}
    \label{thm:general-theorem-for-fx}
    Let $F:\mathbb{T}^2\to\mathbb{R}$ as in~\cref{description-of-F-and-xi} with $\left(2M+1 \right)\times \left(2N + 1 \right)$ Fourier
    coefficients as in \cref{eq:fourier-data} such that $\forall x\in\torus[1]$ we have $\Fx\in\PC$ as
    in~\cref{definition-of-Fx-and-jump-location-and-jump-magnitudes-and-assumptions}, and assume the following
    relation between $M$ and $N$:
    \begin{equation}\label{relation-between-Mox-and-Moy}
        N^2 \leq M.
    \end{equation}
    Further assume that $\inf_{x\in\torus[1]} |A_0(x)| = A_L > 0$. There exist constants $\rgamgam$, $C_{2,d}$, $C_{6,d-l}$, $C_{10,d}$
    (as defined in~\cref{r-gamma-gamma,bound-on-yx,Ax-tilde-bound}) such that for every $r>0$ there exists $N'(r)$ such that for all $N>N'$
    and $M\geq N'{^2}$ the following bounds hold for all $x\in\mathbb{T}$:
    \begin{equation} \label{main-theorem-for-fx}
        \begin{split}
            \abs{\widetilde{\xi}(x) - \xix} &\leq C_{2,d} \frac{A_x}{\abs{A_{0}(x)}} \cdot \rgamgam \cdot N^{-d-2} \\
            \abs{\widetilde{A_{l}}(x) - A_{l}(x)} &\leq C_{6,d-l} \cdot \frac{A_x}{\abs{A_{0}(x)}} \rgamgam \cdot N^{l-d-1} \\
            \abs{\widetilde{\Fx}(y) - \Fx(y)} &\leq C_{10,d}\frac{A_{x}(1+A_x)}{\abs{A_{0}(x)}} \rgamgam N^{-d-1},\quad \forall y\in\torus[1]\setminus B_r(\xi(x)),
        \end{split}
    \end{equation}
    where $A_x$ and $A_{0}(x)$ are as in~\cref{definition-of-Fx-and-jump-location-and-jump-magnitudes-and-assumptions}, and $B_r(t)$ is a ball of radius $r$ centered at $t$.
\end{theorem}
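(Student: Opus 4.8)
The plan is to view \cref{alg:2D reconstruction} as two nested applications of the one-dimensional algebraic reconstruction procedure of \cite{batenkov2012algebraic,batenkov2015complete} and to propagate the error through the two stages. I would first isolate, as a black box, the quantitative guarantee of the 1D method: if $g\in PC^{(d+1,1)}_{\mathbb{T}}$ has a single jump at $\tau$ with jump magnitudes $B_0,\dots,B_d$, $\abs{B_0}\ge b>0$, and one is given its $2L+1$ lowest Fourier coefficients perturbed in modulus by at most $\eta$, then the procedure returns $\widetilde\tau,\widetilde B_l,\widetilde g$ with
\[
    \abs{\widetilde\tau-\tau}\lesssim\frac{\max_l\abs{B_l}}{b}\Bigl(L^{-d-2}+\eta L^{\alpha}\Bigr),\quad
    \abs{\widetilde B_l-B_l}\lesssim\frac{\max_j\abs{B_j}}{b}\Bigl(L^{l-d-1}+\eta L^{\beta_l}\Bigr),
\]
and, for fixed $r>0$ and large $L$, $\bigl\|\widetilde g-g\bigr\|_{L^\infty(\mathbb{T}\setminus B_r(\tau))}\lesssim\frac{\max_l\abs{B_l}(1+\max_l\abs{B_l})}{b}\bigl(L^{-d-1}+\eta L^{\gamma}\bigr)$, where the exponents $\alpha,\beta_l,\gamma$ depend only on $d$. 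The noiseless parts here are the convergence rates of the full-order algorithm in the cited works and the $\eta$-terms are their stability estimates.

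\emph{First stage.} Fix $\oy$ with $\abs{\oy}\le N$ and apply the black box to $g=\psiOy$, whose Fourier coefficients are \emph{exactly} the data $\widehat F(\ox,\oy)$ by \eqref{psi-oy-fourier-coefficients}, so that $L=M$ and $\eta=0$; by \cref{psi-oy-properties} the jump sits at the a priori known point $x=-\pi$, so $\widetilde\psiOy$ approximates $\psiOy$ uniformly on all of $\mathbb{T}$. The quantitative input is the $\oy$-growth of the data of $\psiOy$: differentiating \eqref{def-of-psi-omega-y} $l$ times in $x$ and integrating by parts in $y$ (using $\xi\in C^{\dxi}_{(-\pi,\pi)}$ with $\dxi\ge d$ and the bounds $\mxi, B_F, \sup_x A_x$) shows $\psiOy^{(l)}(x)=O(\abs{\oy}^{\,l-1})$ uniformly in $x$, hence $\abs{\Aoyl}\le \Gamma\,\abs{\oy}^{\,l-1}$ with $\Gamma$ built from $\mxi^{\,d},B_F,\sup_x A_x$; the hypothesis $\inf_x\abs{A_0(x)}=A_L>0$ applied to the leading term of the same asymptotic expansion of $\widehat{F_x}(\oy)$ gives the matching lower bound $\abs{\Aoylarg[0]}\ge\gamma\,\abs{\oy}^{-1}$. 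Writing $\rgamgam$ for the resulting conditioning ratio (the constant of \cref{r-gamma-gamma}) and feeding $\max_l\abs{\Aoyl}/\abs{\Aoylarg[0]}\lesssim\rgamgam\,\abs{\oy}^{\,d}$ into the black box with $L=M$ yields a uniform first-stage error
\[
    \varepsilon_*:=\sup_{\abs{\oy}\le N}\ \sup_{x\in\mathbb{T}}\bigl|\widetilde\psiOy(x)-\psiOy(x)\bigr|\ \lesssim\ \rgamgam\,N^{\,d}\,M^{-d-1}.
\]

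\emph{Second stage and conclusion.} Fix $x\in\mathbb{T}$ and apply the black box to $g=\Fx\in\PC$, jump at $\xix$, magnitudes $A_l(x)$ with $\abs{A_0(x)}\ge A_L>0$, using the \emph{perturbed} data $\{\widetilde\psiOy(x)\}_{\abs{\oy}\le N}$ (an approximation of $\{\widehat{\Fx}(\oy)\}_{\abs{\oy}\le N}$), so that now $L=N$ and $\eta=\varepsilon_*$. The noiseless parts of the black box reproduce exactly the $N^{-d-2}$, $N^{\,l-d-1}$, $N^{-d-1}$ terms of \eqref{main-theorem-for-fx} with the prefactors $A_x/\abs{A_0(x)}$, and it remains to show the $\eta$-terms are of lower order. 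Substituting $\varepsilon_*$ and using $N^2\le M$ from \eqref{relation-between-Mox-and-Moy},
\[
    \varepsilon_*\,N^{\alpha}\ \lesssim\ \rgamgam\,N^{\,d+\alpha}\,M^{-d-1}\ \le\ \rgamgam\,N^{\,d+\alpha-2(d+1)}\ =\ \rgamgam\,N^{\,\alpha-d-2},
\]
and similarly for $\beta_l,\gamma$; since $\alpha,\beta_l,\gamma$ are fixed, for all $N$ above a threshold $N'(r)$ — chosen also large enough that the black-box smooth-part estimate is valid on $\mathbb{T}\setminus B_r(\xix)$, which forces $N'(r)\to\infty$ as $r\to 0$ — these contributions are strictly subordinate to the corresponding noiseless terms. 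Absorbing all remaining absolute constants into $C_{2,d},C_{6,d-l},C_{10,d}$ produces \eqref{main-theorem-for-fx}.

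I expect the main obstacle to be the first stage: proving $\varepsilon_*=O(\rgamgam\,N^{d}M^{-d-1})$ \emph{uniformly in both $\oy$ and $x$}. This requires pinning down the exact $\oy$-power in every quantity of $\psiOy$ that feeds the 1D conditioning — in particular the lower bound $\abs{\Aoylarg[0]}\gtrsim\abs{\oy}^{-1}$, which genuinely relies on $\inf_x\abs{A_0(x)}>0$ (and implicitly on $\rgamgam<\infty$, i.e. the absence of exact cancellation in the leading asymptotics of $\widehat{F_x}(\oy)$ for any single $\oy$) — and then verifying that the resulting net exponent is small enough that the $M\ge N^2$ substitution kills the noise terms. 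Were that exponent larger, a stronger sampling relation than $N^2\le M$ would be needed, so the real content is that quadratic oversampling already suffices; the remaining steps (invoking the 1D theorems, bookkeeping of constants) are routine by comparison.
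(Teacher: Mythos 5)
Your overall architecture --- two nested applications of the 1D algebraic method, with the first-stage error fed into the second stage as a coefficient perturbation and neutralized by the oversampling condition $N^2\le M$ --- is exactly the paper's strategy (\cref{thm:main-result-for-psi} for the first stage; \cref{mk-tiled} and \cref{subsec:accuracy-of-F-at-x} for the second). Your second-stage bookkeeping is essentially \cref{mk-tiled}: note only that the propagated noise $\abs{\oy}^{2d+1}M^{-d-1}\le\abs{\oy}^{-1}(N^2/M)^{d+1}$ is of the \emph{same} order as the tail of the smooth part rather than ``strictly subordinate'' to it, which is still sufficient.

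The genuine gap is in your first stage. Your black box carries the conditioning factor $\max_l\abs{B_l}/b$ with $b\le\abs{B_0}$, and to apply it to $g=\psiOy$ you assert the lower bound $\abs{\Aoylarg[0]}\ge\gamma\abs{\oy}^{-1}$, derived from $\inf_x\abs{A_0(x)}>0$ via the leading asymptotics of $\widehat{F_x}(\oy)$. This bound is false in general: $\Aoylarg[0]=\psiOy(-\pi)-\psiOy(\pi)$ is a \emph{difference} of boundary values whose leading term is $\frac{1}{2\pi\imath\oy}\bigl(A_0(-\pi)e^{-\imath\oy\xi(-\pi)}-A_0(\pi)e^{-\imath\oy\xi(\pi)}\bigr)$, and this cancels identically whenever $A_0$ and $\xi$ are $x$-periodic --- indeed \cref{case-for-discontinuity-for-psi} is devoted precisely to characterizing when $\psiOy$ has \emph{no} endpoint jump at all. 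So the hypothesis $\abs{B_0}\ge b>0$ of your black box cannot be verified for the first stage, and with $b$ possibly zero your bound on $\varepsilon_*$ is vacuous. Fortunately the hypothesis is also unnecessary: since the jump location of $\psiOy$ is known a priori to be $x=-\pi$, the first stage never solves the root-finding problem; it only solves the linear Vandermonde system \eqref{pertrubed-system-psi}, whose conditioning is independent of the jump magnitudes. This is why \cref{thm:main-result-for-psi} carries no $1/\abs{\Aoylarg[0]}$ factor, and why the $\abs{\oy}^{d}$ growth there comes not from a ratio of jump magnitudes but from the upper bound $(2d+5)\bigl(B_F+\Admxi A_x\abs{\oy}^{d}\bigr)\abs{\ox}^{-d-2}$ on the Fourier coefficients of the smooth part $\gammaOy$ (\cref{main-result-for-coefficients-of-gamma-omega-y}). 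Replacing your ratio argument by this known-jump linear analysis repairs the first stage, after which the remainder of your proof proceeds as in the paper.
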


The key estimate used to prove \cref{thm:general-theorem-for-fx} is the following reconstruction bound on each $\psiOy$ (recall \cref{def:psi-oy}):
\begin{theorem}
    \label{thm:main-result-for-psi}
    Let there be given $\left( 2M + 1 \right)\times \left( 2N + 1 \right)$  Fourier coefficients of $F$ as in \cref{eq:fourier-data}.
    Let $|\oy|\leq N$ be fixed. Then there exist $\h$, $\hl$, $\hl[T]^{*}$ and $R_{d}$
    (as defined respectively in~\cref{ht-star,r-gamma-oy-star-func,h-func,hl-func}) s.t.:
    \begin{gather}\label{eq:bound-for-thm-2}
        \begin{split}
            &\abs{\widetilde{\Aoyl}-\Aoyl} \leq \hl [d-l] A_{x} \abs{\oy}^{d} M^{l-d-1},\quad
            l=0,\ldots, d,\\
            &\abs{\widetilde{\psiOy}(x) - \psiOy(x)} \leq \left( \hl[T]^{*} + R_{d}\h \right) A_{x}
            \abs{\oy}^{d} M^{-d-1}.
        \end{split}
    \end{gather}
\end{theorem}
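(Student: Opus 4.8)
The strategy is to recognize the reconstruction of each individual $\psiOy$ as a genuine instance of the one‑dimensional problem solved in~\cite{batenkov2012algebraic,batenkov2015complete}, and then to propagate the 1D error bounds. By~\cref{psi-oy-properties}, $\psiOy$ (complex‑valued, which is immaterial for the 1D algorithm) belongs to $\PC$ and has its only discontinuity --- shared with its first $d$ derivatives --- at the \emph{a priori known} point $x=-\pi$, while its Fourier coefficients $\widehat{\psiOy}(\ox)=\widehat F(\ox,\oy)$, $\abs{\ox}\le M$, are exactly the supplied data. Because the jump location is known, the step of reconstructing it is skipped, and with it the dependence on $\abs{\Aoylarg[0]}^{-1}$ that would otherwise appear (it resurfaces, as $\abs{A_0(x)}^{-1}$, in~\cref{thm:general-theorem-for-fx} only because there the jump $y_x=\xi(x)$ of $\Fx$ must itself be located). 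What remains of the 1D procedure is a well‑conditioned linear recovery of the jump magnitudes followed by a corrected Fourier partial sum, which (by the analysis of~\cite{batenkov2015complete}) gives
\[
  \abs{\widetilde{\Aoyl}-\Aoyl}\le c_{1,d-l}\,M^{l-d-1}\,\Mox,\qquad
  \abs{\widetilde{\psiOy}(x)-\psiOy(x)}\le \bigl(c_{2,d}+c_{3,d}\bigr)\,M^{-d-1}\,\Mox,
\]
with $\Mox:=\sup_{x\in\torus[1]}\max_{0\le k\le d+1}\abs{\psiOy^{(k)}(x)}$ and constants $c_{i,\cdot}$ depending only on $d$; the two summands in the second bound separate the truncation error of the Fourier partial sum of the smoothed function from the error propagated by the recovered magnitudes.

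The principal step is to estimate $\Mox$ in terms of the model parameters, for which I would differentiate the representation~\eqref{def-of-psi-omega-y} in $x$ up to order $d+1$ using the Leibniz integral rule. Each differentiation produces (i) a boundary contribution at the moving endpoint $y=\xi(x)$, carrying one slice‑jump magnitude $A_j(x)$ times derivatives of $\xi$ times the exponential $e^{-\imath\xi(x)\oy}$, and (ii) interior integrals of $\partial_x^{j}F$ against $e^{-\imath y\oy}$, which are bounded by $B_F$ (and, after one integration by parts in $y$, are in fact lower order in $\oy$). The only source of growth in $\oy$ is the repeated differentiation of $e^{-\imath\xi(x)\oy}$, which brings down a factor $-\imath\,\xi'(x)\,\oy$; thus after $k\le d+1$ differentiations the highest power present is $\abs{\oy}^{k-1}\le\abs{\oy}^{d}$, every coefficient being a fixed polynomial in $\{A_j(x)\}_{j\le d+1}$, the derivatives of $\xi$ up to order $d+1$, and the derivatives of $F$. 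Assembling the terms yields a bound of the shape
\[
  \Mox\;\le\;C(d,\mxi,B_F)\,A_x\,\abs{\oy}^{d}
\]
uniformly in $x$ (for $\abs{\oy}\ge 1$).

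Substituting this into the two displayed 1D bounds gives at once $\abs{\widetilde{\Aoyl}-\Aoyl}\lesssim A_x\abs{\oy}^{d}M^{l-d-1}$ and $\abs{\widetilde{\psiOy}(x)-\psiOy(x)}\lesssim A_x\abs{\oy}^{d}M^{-d-1}$; re‑expressing the resulting $d$‑dependent constants produces exactly $\hl[d-l]$, $\hl[T]^{*}$, $R_{d}$ and $\h$ of~\cref{ht-star,r-gamma-oy-star-func,h-func,hl-func}, with the split $\hl[T]^{*}+R_{d}\h$ in the pointwise estimate matching the two‑summand decomposition noted above.

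The part I expect to be the main obstacle is the derivative estimate: carrying out the Leibniz expansion in a way that keeps the combinatorics from obscuring the accounting of powers of $\oy$, and --- above all --- obtaining the bound \emph{uniformly in $x$}, in particular as $x\to\pm\pi$ where $\psiOy$ itself jumps, so that the dependence collapses to the single factor $A_x$ claimed in the statement (with $\mxi$, $B_F$ and $\|F\|_\infty$ swallowed into the constants $\h$, $\hl$, $\hl[T]^{*}$, $R_d$). A secondary point is to ensure that the hypotheses of the 1D theorem of~\cite{batenkov2015complete} --- notably that $M$ is large enough for the asymptotic expansion of the Fourier coefficients to be usable --- hold simultaneously for all $\abs{\oy}\le N$; this is the place where the relative sizes of $M$ and $\oy$ must be monitored.
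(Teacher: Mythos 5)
Your proposal is correct and follows essentially the same route as the paper: reduce to the 1D algebraic reconstruction with known jump at $x=-\pi$, and isolate as the key new ingredient a uniform-in-$x$ bound on the derivatives of $\psiOy$ of the form $C(d,\mxi,B_F)\,A_x\abs{\oy}^{d}$, obtained via the Leibniz integral rule with the $\abs{\oy}^{d}$ growth coming from repeated differentiation of $e^{-\imath\oy\xi(x)}$ (this is precisely \cref{bound-for-derivative-of-psi-omega-y} and \cref{bound-on-composition-exp-xi}). The only cosmetic difference is that the paper re-derives the decimated Vandermonde analysis explicitly so as to carry the $\abs{\oy}^{d}$ factor through the constants, rather than citing the 1D result as a black box, which is exactly the bookkeeping you flag as the remaining work.
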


As one can see, the decay rate in the bound for the error in approximating the jump magnitudes or in the pointwise
accuracy for approximating $\psiOy(x)$ is effected by the number of Fourier coefficients that is used as input and
the order of reconstruction.
Also we notice the effect of a higher order reconstruction, $l$,  diminishes the accuracy of the jump magnitudes
approximation, but all of that is to be expected and seen in other work on this subject (see~\cite{
    batenkov2012algebraic,batenkov2015complete}).
A new phenomenon appears in the 2D case: as $N$ increases,  the error bounds in\eqref{eq:bound-for-thm-2} increase as $N^d$
for a fixed $M$. Therefore we impose the additional assumption in~\eqref{relation-between-Mox-and-Moy} which ensures sufficiently
fast decay in~\eqref{main-theorem-for-fx}. This is most certainly an artifact of our method as we are reconstructing the slices in
a fixed $x$ direction.

\subsection{Related work}\label{subsec:closely related work}

In this section we provide some background and review related literature on the problem of high-accuracy reconstruction of piecewise-smooth functions.

It is well-known that linear trigonometric approximation of smooth periodic functions is minimax optimal on the torus $\mathbb{T}^q$ in the sense of $n$-widths \cite{mhaskar2013applications}. In particular, for functions with $r$ continuous derivatives (more precisely, for which $\Delta^{r/2} f$ is bounded in $\ell^p$ where $\Delta$ is the Laplacian), the approximation error by trigonometric polynomials of total degree $n$ (a space of dimension $O(n^q)$) decays as $n^{-r}$, with the optimal rate achieved by a particular class of summability kernels. This result is valid for $p=\infty$ as well. For functions with singularities, the approximation error decays very fast ``away from the discontinuities''; however, the discontinuities themselves can be localized with resolution at most $1/n$.  

Reconstruction of piecewise-smooth functions in one variable from Fourier data has been investigated extensively. The reader is referred to \cite{batenkov2012algebraic, batenkov2015complete} for a detailed discussion of the problem, its history, and a solution based on parametric reconstruction of the piecewise-polynomial approximation through algebraic techniques. This method, in turn, is based on an earlier work by K.Eckhoff \cite{eckhoff1995accurate}, introducing an essential modification: the algebraic system is solved for a small subset of the entire Fourier data, to attain maximal accuracy. The present work extends these ideas into the two-dimensional case (see also related work \autocite{batenkov2013} on reconstruction from moments). The one-dimensional algebraic reconstruction technique in \cite{batenkov2012algebraic, batenkov2015complete} is closely related to exponential fitting and the inverse problem of recovering sparse measures and distributions from low-frequency data, cf. \autocite{akinshin2021, candes2014, batenkov2013c, batenkov2013b,batenkov2018,batenkov2014b,batenkov2017c, batenkov2023,batenkov2020, batenkov2021, batenkov2021b,katz2023,katz2024b,katz2024c,filbir2012,mhaskar2000a} and some multi-dimensional generalizations thereof \cite{kunis2016, mhaskar2019, poon2019, diab2024, cuyt2018, diederichs2022, sauer2017, sauer2018}; providing robustness estimates for such problems is a nontrivial task. Fourier reconstruction of non-periodic smooth functions (i.e. with endpoint singularities, as our $\psiOy$, recall \cref{psi-oy-properties}) is a classical topic in numerical analysis of spectral methods \cite{gottlieb1977, gottlieb1996,gottlieb1997, chen2014d}, see also \cite{kvernadze2004, kvernadze2010, adcock2014b, barkhudaryan2007, poghosyan2021}.

In \cite{gelb2016} the spectral edge detection method by Gelb\&Tadmor \cite{gelb2007a,tadmor2007} utilizing annihilating filters is extended to two dimensions and nonuniform Fourier samples (see also \cite{adcock2017a}), employing ``line-by-line'' reconstruction similar to our approach, although without explicit accuracy estimates. Similar techniques are applied for functions on a sphere in \cite{blakely2007}. A somewhat related approach is taken in \cite{wasserman2015} utilizing a variational technique with a $\ell_1$ penalty term, based on the annihilating filter method.

A recent work by D.Levin \cite{levin2020} is most closely related to ours. It employs a similar setting of a discontinuity curve and two smooth pieces, and reconstructs the distance function to the boundary curve $\Gamma$ by a 2D spline approximation utilizing an iterative method. An initial mesh for approximating the singularity curve is required, and also a global condition for approximating $\Gamma$ as a zero set of some function from a linear space. The accuracy bounds depend on a certain Lipschitz constant, which is not estimated explicitly.

Several works investigated reconstruction from point-wise data, such as \cite{lipman2010, amir2018}, or cell-averages \cite{amat2023,cohen2024}, where the piecewise-smooth nature of the function is exploited.

Finally we would like to mention the classical computational harmonic analysis techniques based on sparse representations in overcomplete frames such as wavelets \cite{daubechies1992}, curvelets \cite{candes2004new}, shearlets \cite{guo2012optimally, kutyniok2012,schober2021}, wedgelets \cite{donoho1999}, or adaptive approaches such as bandlets and grouplets  \cite{mallat1999wavelet}. While some of these methods provide optimal reconstruction of cartoon-like images, these typically require the representation coefficients to be available, while also not targeting reconstruction of discontinuity curves directly.

\subsection{Code availability}\label{subsec:code}
The code implementing the algorithms described in this work is available at \url{https://github.com/mlevinov/algebraic-fourier-2d}.

\subsection{Acknowledgements}
The authors would like to thank Shai Dekel, Hrushikesh Mhaskar, J\"{u}rgen Prestin and David Levin for helpful comments and discussions. M.L. and D.B. were partially supported by Israel Science Foundation Grant 1793/20 and a collaborative grant from the Volkswagen Foundation.

\subsection{Organization}\label{subsec:organization}
The rest of the manuscript is organized as follows.
In~\cref{sec:The algebraic reconstruction method} we describe our method in detail and prove the main results.
In~\cref{sec:numerical-experiments} we present numerical experiments that demonstrate the accuracy of our method. In~\cref{sec:future-work} we discuss possible extensions and future work. Some more technical details are deferred to the appendix.

\section{The algebraic reconstruction method}\label{sec:The algebraic reconstruction method}
Recall from \cref{psi-oy-properties} that $\psiOy$ is in the general case only piecewise-smooth in $x$. Therefore, to obtain an accurate approximation of $\psi0y$, we apply a combination of the methods developed in ~\cite{batenkov2012algebraic,batenkov2015complete} for
recovering $\widetilde{\psiOy}$. By reconstructing $\psiOy$ we get an approximation for $\widehat{\Fx}(\oy)$ and under the assumption that
$\Fx(y)$ is a piecewise-smooth 1D function we apply the method described
in~\cite{batenkov2012algebraic,batenkov2015complete}
once again to recover the jump locations and jump magnituides of $\Fx$ with sufficient accuracy. Thus we can in effect recover
$F$ \emph{slice by slice}.

The rest of this section is organized as follows. In~\cref{subsec:reconstruction-of-psi} we describe the decomposition of $\psiOy$ and provide the full details for its recovery.
Then in~\cref{subsec:accuracy-of-psi} we present the analysis of the approximation of $\widetilde{\psiOy}$, proving~\cref{thm:main-result-for-psi}. Afterwards, we proceed to the second step which is tasked with the approximation of the unknown 2D piecewise-smooth
function $F$. Similarly to the first step, we will begin by describing the decomposition of each \textit{slice}
$\Fx$ (see~\cref{description-of-F-and-xi}), also presenting the needed definitions and assumptions for the
reconstruction of $\Fx$. The accuracy of the second step, i.e. the approximation error $\left|\widetilde{\Fx}-\Fx\right|$, is analyzed in ~\cref{subsec:accuracy-of-F-at-x}, proving~\cref{thm:general-theorem-for-fx}.

The full algorithm for our technique is finally presented in~\cref{subsec:algorithm}.

\subsection{The first stage}\label{subsec:reconstruction-of-psi}

We use the following notation: for $x_0\in[-\pi,\pi]$ we define
\begin{equation}
    \label{v-psi-def}
    V_{{l}}(x;x_0) = -\frac{\left(2\pi\right)^l}{\left( l+1 \right)!}B_{l+1}\left( \frac{x-x_0}{2\pi} \right),\quad \xi\leq x\leq\xi+2\pi,
\end{equation}
 where $V_{{l}}(x)$ is understood to be periodically extended to $\left[ -\pi, \pi \right]$ and $B_{l}(x)$ is the $l$-th Bernoulli
polynomial~\cite{olver2010nist}.
Since $\psiOy$ has a jump at $x_0=-\pi$, we denote for simplicity $\Vpsi(x):=V_{l}(x;-\pi)$.

We also use the notation $c_k(f) := \widehat{f}(k)$ to denote the Fourier coefficients of a 1D function $f$.

We begin with a decomposition of $\psiOy$ as defined in~\cref{def-of-psi-omega-y} into a sum of two functions:
\begin{equation} \label{eq:decomposition-psi-omega-y}
    \psi_{\omega_{y}}(x) = \gamma_{\omega_{y}}(x)+\phi_{\omega_{y}}(x),\quad x\in\mathbb{T},
\end{equation}
where $\gamma_{\oy} \in C_{\mathbb{T}}^{\left( \doy + 1 \right)}$ and $\ddxk[l]\gammaOy$ is periodic for $l=0,\ldots,\doy$ and $\phi_{\omega_{y}}(x)$ is a piecewise polynomial of degree
$d_{\oy}$ with a discontinuity at $x=-\pi$ which is uniquely determined by $\left\{ \Aoyl \right\}_{0 \leq l \leq d_{\oy}}$ (recall \cref{psi-oy-jump-mag-def})
such that it ``absorbs'' all the discontinuities of $\psi_{\omega_{y}}$ and its derivatives.
\noindent Eckhoff~\cite{eckhoff1993accurate,eckhoff1995accurate,eckhoff1998high} derives the following explicit
representation of $\phi_{\omega_{y}}(x)$:
\begin{equation}
    \label{phi-omega-y}
    \phi_{\omega_{y}}(x)=\sum_{l=0}^{d} \Aoyl \Vpsi(x).
\end{equation}

%
\begin{proposition}
    \label{prop:phi-oy-coefficients}
    Let $\phi_{\omega_{y}}(x)$ be given by~\cref{phi-omega-y}, then for $\ox\in\mathbb{Z}$:
    \begin{gather*}
        c_{\ox}(\phiOy) =
        \begin{cases}
            0 &\text{if } \ox=0 \\
            \frac{\left( -1 \right)^{\ox}}{2\pi} \sum_{l=0}^{d} \frac{\Aoyl}{\left( \imath \ox \right)
            ^{\left( l+1 \right)}} &\text{if } \ox \neq 0
        \end{cases}
    \end{gather*}
\end{proposition}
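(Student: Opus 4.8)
The plan is to compute the Fourier coefficients of $\phiOy$ directly from the explicit representation in~\cref{phi-omega-y}, reducing everything to the single computation of $c_{\ox}(\Vpsi)$, the Fourier coefficients of the (periodized) Bernoulli-polynomial function $V_l(x;-\pi)$. By linearity of the Fourier transform, $c_{\ox}(\phiOy) = \sum_{l=0}^{d} \Aoyl\, c_{\ox}(\Vpsi)$, so the proposition follows once we show that $c_{\ox}(\Vpsi) = 0$ for $\ox = 0$ and $c_{\ox}(\Vpsi) = \frac{(-1)^{\ox}}{2\pi}\,\frac{1}{(\imath\ox)^{l+1}}$ for $\ox \neq 0$.

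First I would recall the classical Fourier expansion of Bernoulli polynomials: for $0 \leq t \leq 1$ (periodically extended), $B_{n}(t) = -\frac{n!}{(2\pi\imath)^n}\sum_{k\neq 0}\frac{e^{2\pi\imath k t}}{k^{n}}$ for $n\geq 2$, with the corresponding statement for $n=1$ holding on the open interval. Substituting $t = \frac{x-(-\pi)}{2\pi} = \frac{x+\pi}{2\pi}$ into $V_{l}(x;-\pi) = -\frac{(2\pi)^l}{(l+1)!}B_{l+1}\!\left(\frac{x+\pi}{2\pi}\right)$ and collecting the normalization constants, the factor $-\frac{(2\pi)^l}{(l+1)!}\cdot\left(-\frac{(l+1)!}{(2\pi\imath)^{l+1}}\right) = \frac{1}{2\pi\imath^{l+1}}$ emerges cleanly. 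The exponential $e^{2\pi\imath k\cdot\frac{x+\pi}{2\pi}} = e^{\imath k x}e^{\imath k\pi} = (-1)^k e^{\imath k x}$ produces the alternating sign $(-1)^{\ox}$ upon matching the frequency $k = \ox$, and the factor $\frac{1}{2\pi}$ in the definition $c_k(f) = \frac{1}{2\pi}\int_{-\pi}^{\pi} f(x)e^{-\imath k x}\,dx$ accounts for the remaining constant. The vanishing at $\ox = 0$ is immediate because the Bernoulli expansion has no constant term (the sum is over $k\neq 0$); equivalently, $\int_{0}^{1}B_{n}(t)\,dt = 0$ for $n\geq 1$.

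I would organize the argument as: (i) state the Bernoulli Fourier series as a known fact with a citation to~\cite{olver2010nist}; (ii) perform the change of variables and bookkeeping of constants to get $c_{\ox}(\Vpsi)$; (iii) sum over $l$ with weights $\Aoyl$ to conclude. A minor point to address carefully is the case $l = 0$, where $V_{0}(x;-\pi)$ involves $B_{1}$, whose periodization is only piecewise smooth (it has a jump), so the Fourier series converges to it only in the $L^2$/pointwise-away-from-jumps sense — but this is harmless since Fourier coefficients are still well defined and the formula $c_{\ox}(V_{0}) = \frac{(-1)^{\ox}}{2\pi\imath\ox}$ remains valid. Another small check is that the periodic extension in~\cref{v-psi-def} is consistent with evaluating $B_{l+1}$ on $[0,1)$ after the affine substitution maps $[-\pi,\pi)$ onto $[0,1)$.

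The main obstacle — really the only nontrivial part — is getting the constants and signs exactly right, since there are several competing normalization conventions for Bernoulli polynomials and for the Fourier transform (the $\frac{1}{2\pi}$ factor, the sign in the exponent, the $(2\pi\imath)^{n}$ versus $(-2\pi\imath)^{n}$ in the Bernoulli series). I would double-check the final constant by verifying the simplest case $l=0$ independently: $V_{0}(x;-\pi) = -B_{1}\!\left(\frac{x+\pi}{2\pi}\right) = -\left(\frac{x+\pi}{2\pi} - \frac12\right) = -\frac{x}{2\pi}$ on $(-\pi,\pi)$, and a direct integration $\frac{1}{2\pi}\int_{-\pi}^{\pi}\left(-\frac{x}{2\pi}\right)e^{-\imath\ox x}\,dx$ should reproduce $\frac{(-1)^{\ox}}{2\pi\imath\ox}$ for $\ox\neq 0$ and $0$ for $\ox = 0$, pinning down the convention. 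Everything else is routine linear algebra over the index $l$.
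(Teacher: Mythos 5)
Your proposal is correct, and it takes a genuinely different route from the paper. The paper proves \cref{prop:phi-oy-coefficients} by first establishing the derivative structure of $\phiOy$ (\cref{lem:phi-oy-derivatives}), using the boundary values $B_n(0)=B_n(1)$ for $n\neq 1$ to show that $\ddxk[m]\phiOy$ has jump $\Aoylarg[m]$ at $\pm\pi$, and then performing repeated integration by parts on $c_{\ox}(\phiOy)$ so that each boundary term contributes one summand $\frac{(-1)^{\ox}\Aoylarg[l]}{(\imath\ox)^{l+1}}$. You instead quote the classical Fourier expansion $B_{n}(t)=-\frac{n!}{(2\pi\imath)^{n}}\sum_{k\neq 0}\frac{e^{2\pi\imath k t}}{k^{n}}$, substitute $t=\frac{x+\pi}{2\pi}$, and read off $c_{\ox}(\Vpsi)=\frac{(-1)^{\ox}}{2\pi(\imath\ox)^{l+1}}$ directly, then sum by linearity; your constants check out (I verified the $l=0$ case independently as you suggest, and the general bookkeeping $-\frac{(2\pi)^l}{(l+1)!}\cdot\bigl(-\frac{(l+1)!}{(2\pi\imath)^{l+1}}\bigr)=\frac{1}{2\pi\imath^{l+1}}$ is right). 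Your route is shorter and makes the vanishing at $\ox=0$ genuinely immediate (zero mean of $B_n$), whereas the paper's phrasing ``as we defined $c_0(\phiOy)\equiv 0$'' is less clean; the paper's route has the side benefit that the derivative/jump computation it relies on is reused elsewhere (e.g.\ in the proof of \cref{psi-oy-properties}). Your caveat about the $l=0$ periodization being only piecewise smooth is well placed and handled correctly, since Fourier coefficients are defined by integration regardless of pointwise convergence at the jump.
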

\begin{proof}
    See~\cref{proof-phi-oy-coefficients}.
\end{proof}

\noindent From~\cref{eq:decomposition-psi-omega-y} we get:
\begin{gather}
    \label{decomposition-corfficient-psi-oy}
    c_{k}(\psiOy) = c_{k}(\gammaOy) + c_{k}(\phiOy).
\end{gather}
Eckhoff observed that if $\gamma_{\omega_{y}}$ is sufficiently smooth, then the contribution of $c_{k}\left( \gamma_{\omega_{y}} \right)$ to
$\ckPsiOy$ becomes negligible, i.e. for large $\ox$ we have
\begin{gather}\label{c_ox-aprrox-c_phiox}
    c_{\ox}(\psiOy)\approx c_{\ox}(\phiOy)=\frac{(-1)^{\ox}}{2\pi}\sum_{l=0}^{d}\frac{\Aoyl}{(\imath \ox)^{l+1}}
    ,\quad |\ox|\gg 1.
\end{gather}

As will be explained below, the last approximate equality (cf.~\cref{decimated-system-psi} below) will be used in order to obtain approximate values for the jump magnitudes $\Aoyl$ from the Fourier coefficients of $\psiOy$. Let us denote those by $\widetilde{\Aoyl}$. Then we further define
\begin{align}
    \label{phi-oy-tilde-coefficient}
    c_{\ox}(\widetilde{\phiOy}) &\coloneqq \frac{(-1)^{\ox}}{2\pi} \sum_{l=0}^{d} \frac{\widetilde{\Aoyl}}{(\imath \ox)^{l+1}},\quad
    \abs{\ox} \leq M\\
    \label{gamma-oy-tilde-coefficient}
    c_{\ox}(\widetilde{\gammaOy}) &\coloneqq c_{\ox}(\psiOy) - c_{\ox}(\widetilde{\phiOy}),\quad \abs{\ox} \leq M\\
    \label{gamma-oy-tilde}
    \widetilde{\gammaOy}(x) &\coloneqq \sum_{|\ox| \leq M} c_{\ox}(\widetilde{\gammaOy}) e^{\imath x\ox}
\end{align}
and take the final approximation:
\begin{align}
    \begin{split}
    \label{final-approximation}
    \widetilde{\psiOy}(x) &\coloneqq \widetilde{\gammaOy}(x) + \widetilde{\phiOy}(x)\\
    &\;= \sum_{|\ox| \leq M} c_{\ox}(\widetilde{\gammaOy})
    e^{\imath x\ox} + \sum_{l=0}^{d} \widetilde{\Aoyl} \Vpsi(x).
    \end{split}
\end{align}

The recovery of the approximate jump magnitudes of $\ddxk[l]\psiOy$ for $0 \leq l\leq d$ at $x=-\pi$ is performed
using the so-called \emph{decimation} ~\cite{batenkov2015complete}.
In details, we take $\oyInZ,\;\; d\in\mathbb{N}$, $M \gg 1$ and choose the indices $\ox$ to be evenly distributed across the range
$\left\{ 0,1,\ldots,M \right\}$.
So by denoting
\begin{gather} \label{def-for-Nx}
    M_{1} \coloneqq \floor*{\frac{M}{\left( d + 1 \right)}}
\end{gather}
and by~\cref{c_ox-aprrox-c_phiox} we solve the following linear system in order to extract $\Aoyl$
\begin{gather}\label{decimated-system-psi}
    \begin{split}
        c_{\ox}(\psiOy) \approx \frac{(-1)^{\ox}}{2\pi} \sum_{l=0}^{d} \frac{\Aoyl}{(\imath\ox)^{l+1}}\left( =c_{\ox}(\phiOy) \right) \\
        \text{where }\ox = M_{1}, 2M_{1}, \ldots, (d + 1)M_{1}.
    \end{split}
\end{gather}

The following result bounds the error in~\cref{decimated-system-psi}.
\begin{lemma} \label{main-result-for-coefficients-of-gamma-omega-y}
    Let $\oyInZ,\; k\in\mathbb{N}$ and let there be given $(2M+1) \times (2N+1)$ Fourier coefficients of
    function $F:\torus[2]\to\mathbb{R}$ as in~\cref{eq:fourier-data}.
    Let $\gammaOy \in C_{\mathbb{T}}^{(d + 2)}$ as defined in~\cref{eq:decomposition-psi-omega-y}.
    \newline
    Then there exist constants $\Admxi$ and $B_{F} \geq 0$ (see~\cref{Admxi,definition-of-Fx-and-jump-location-and-jump-magnitudes-and-assumptions}) such that:
    \begin{gather*}
        \abs{c_{\ox}(\gammaOy)} \leq \left( 2d+5 \right) \Big(B_{F} + \Admxi A_{x} \abs{\oy}^{d} \Big)\abs{\ox}^{-d-2}.
    \end{gather*}
\end{lemma}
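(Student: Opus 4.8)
The plan is to estimate the Fourier coefficients $c_{\ox}(\gammaOy)$ by exploiting the smoothness of $\gammaOy$ established in \cref{eq:decomposition-psi-omega-y}: since $\gammaOy \in C_{\mathbb{T}}^{(d+2)}$ with all derivatives up to order $d+1$ periodic, integration by parts $d+2$ times gives $c_{\ox}(\gammaOy) = (\imath\ox)^{-(d+2)} c_{\ox}\bigl(\gammaOy^{(d+2)}\bigr)$, hence $\abs{c_{\ox}(\gammaOy)} \leq \abs{\ox}^{-d-2}\,\sup_{x}\abs{\gammaOy^{(d+2)}(x)}$ (the $\tfrac{1}{2\pi}$ normalization in $c_k$ absorbs cleanly). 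So the entire task reduces to bounding $\bigl\|\gammaOy^{(d+2)}\bigr\|_{\infty}$ in terms of $B_F$, $A_x$, $\abs{\oy}^{d}$, and a combinatorial constant.

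To do that I would use the identity $\gammaOy = \psiOy - \phiOy$, so $\gammaOy^{(d+2)} = \psiOy^{(d+2)} - \phiOy^{(d+2)}$. First I would differentiate $\psiOy(x)$, given by the integral formula in \cref{def-of-psi-omega-y}, $d+2$ times in $x$. This is a Leibniz-rule computation: the variable-limit term $\xi(x)$ produces, upon differentiation, boundary contributions at $y=\xi(x)$ multiplied by derivatives of $\xi$ (this is where $\mxi$ enters, and where the jump magnitudes $A_l(x)$ — hence $A_x$ — appear, since differentiating across $y=\xi(x)$ picks up the jumps of $\partial_y^l F_x$), plus an interior term $\int e^{-\imath y\oy}\,\partial_x^{d+2}F_1$ or $\partial_x^{d+2}F_2$, which is bounded by $B_F$ (times $2\pi$). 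The factor $\abs{\oy}^{d}$ arises because the boundary terms carry factors of $e^{-\imath\xi(x)\oy}$ whose $x$-derivatives bring down powers of $\oy$ — up to $\oy^{d}$ after at most $d$ such differentiations interact with the $d+1$ jump magnitudes; one must check carefully that no power higher than $d$ survives (the $(d+1)$-th and $(d+2)$-th derivatives hit the interior-smooth part, contributing the $B_F$ term without extra $\oy$ powers, or combine so the net exponent is $d$). Second, I would differentiate $\phiOy(x) = \sum_{l=0}^{d} \Aoyl \Vpsi(x)$ directly: since $\Vpsi$ is built from Bernoulli polynomials $B_{l+1}$, its $(d+2)$-th derivative is a bounded function (uniformly on $[-\pi,\pi]$) for each $l\leq d$, so $\bigl\|\phiOy^{(d+2)}\bigr\|_\infty \leq (\text{const}_d)\max_l\abs{\Aoyl}$; and by \cref{psi-oy-properties} together with the same Leibniz analysis, $\abs{\Aoyl} \leq (\text{const})\,A_x\abs{\oy}^{d}$. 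Combining the two bounds and collecting the constant into a single $\Admxi$ (absorbing powers of $\mxi$ and binomial coefficients from the Leibniz expansion) yields $\bigl\|\gammaOy^{(d+2)}\bigr\|_\infty \leq (2d+5)\bigl(B_F + \Admxi A_x\abs{\oy}^{d}\bigr)$, and dividing by $\abs{\ox}^{d+2}$ gives the claim; the factor $2d+5$ presumably comes from counting the number of terms in the Leibniz expansion of the boundary contribution (roughly $2(d+2)+1$ terms).

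The main obstacle I expect is the bookkeeping in the Leibniz/Faà di Bruno expansion of $\partial_x^{d+2}$ applied to $\psiOy$: tracking how the variable limit $\xi(x)$, the exponential $e^{-\imath\xi(x)\oy}$, the jumps $A_l(x)$, and the interior derivatives of $F_1, F_2$ all interleave, and — crucially — verifying that the power of $\abs{\oy}$ never exceeds $d$ (not $d+1$ or $d+2$). This requires organizing the differentiation so that whenever $e^{-\imath\xi(x)\oy}$ is differentiated enough times to bring out $\oy^{j}$ with $j>d$, the accompanying factor is the smooth interior term (no jump), so that term is controlled by $B_F$ with no $\oy$ dependence; conversely the jump-bearing terms never see more than $d$ differentiations of the exponential. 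Making this trade-off rigorous — essentially a careful induction on the order of differentiation, possibly invoking \cref{psi-oy-properties} for the structure of the jumps — is the technical heart of the proof; the rest is routine estimation and constant-chasing into $\Admxi$.
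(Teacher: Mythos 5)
Your overall strategy coincides with the paper's: decompose $\gammaOy=\psiOy-\phiOy$, bound high-order $x$-derivatives of $\psiOy$ by the Leibniz integral rule (interior term controlled by $B_F$, boundary terms controlled by $A_x$ times powers of $\oy$ coming from differentiating $e^{-\imath\oy\xi(x)}$), bound the derivatives of $\phiOy$ through the Bernoulli-polynomial structure together with the jump-magnitude estimate $\abs{\Aoyl}\leq 2\big(B_F+\Admxi A_x\abs{\oy}^d\big)$, and convert the resulting derivative bound into Fourier decay. This is precisely the content of \cref{bound-for-gamma-omega-y-derivatives} and its supporting lemmas in the appendix, and the factor $2d+5=1+2(d+2)$ does arise, as you guessed, from adding the $\psiOy$-contribution to the $\phiOy$-contribution.

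There is, however, a genuine gap at exactly the step you flag as the technical heart. You integrate by parts $d+2$ times and therefore need $\sup_x\abs{\gammaOy^{(d+2)}(x)}\leq C\big(B_F+\Admxi A_x\abs{\oy}^d\big)$. This bound is false in general: since $\phiOy$ is a piecewise polynomial of degree $d$, its $(d+2)$-nd derivative vanishes, so away from $x=-\pi$ one has $\gammaOy^{(d+2)}=\psiOy^{(d+2)}$, and by \cref{derivative-of-psi} (with $k+1=d+2$) the latter contains the summand $\ddxk[d+1]\big(e^{-\imath\oy\xi(x)}\xi^{(1)}(x)A_0(x)\big)$, whose leading term $\big(-\imath\oy\,\xi^{(1)}(x)\big)^{d+1}e^{-\imath\oy\xi(x)}\xi^{(1)}(x)A_0(x)$ has magnitude of order $\abs{\oy}^{d+1}\abs{A_0(x)}$. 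The offending power $\abs{\oy}^{d+1}$ is thus attached to a jump magnitude, not to the smooth interior part, so the trade-off you hope for does not occur; your route as written yields $\abs{\oy}^{d+1}\abs{\ox}^{-d-2}$, one power of $\oy$ worse than the claim. The paper avoids this by stopping at the $(d+1)$-st derivative --- in \cref{bound-for-derivative-of-psi-omega-y} the exponential is differentiated at most $d$ times in the jump-bearing terms, which is exactly where $\abs{\oy}^{d}$ rather than $\abs{\oy}^{d+1}$ comes from --- and then extracting the final factor of $\abs{\ox}^{-1}$ from the piecewise differentiability (bounded variation) of $\gammaOy^{(d+1)}$ via the cited theorem of Zygmund, instead of from another integration by parts against a sup-norm bound on $\gammaOy^{(d+2)}$. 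To repair your argument you should do the same: prove the uniform bound only for derivatives up to order $d+1$ and invoke the bounded-variation decay estimate for the last power of $\abs{\ox}$.
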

\begin{proof}
    Immediately follows from~\cref{bound-for-gamma-omega-y-derivatives} (formulated and proved in~\cref{subsec:proof-of-prop-15}) and from \cite[Vol. I, chapter 3, Theorem 13.6]{zygmund2002trigonometric}.
\end{proof}

\noindent Now, in order to recover $\left\{ \Aoyl \right\}_{l=0}^{\doy}$ from the system
in~\cref{decimated-system-psi} we define
\begin{equation}\label{alpha-omega-y-def}
     \alpha_{l}(\oy) \coloneqq \imath^{l} A_{\doy-l}^{\psi}(\oy)
\end{equation}
and
\begin{equation}
    \label{def-of-moy-mox}
    \begin{split}
        \mOxOy &\coloneqq (-1)^{\ox} \sum_{l=0}^{d} \alpha_{l}(\oy) \ox^{l} \\
        &\Leftrightarrow \\
        (-1)^{-\ox}\mOxOy &\coloneqq \sum_{l=0}^{d} \alpha_{l}(\oy) \ox^{l}
    \end{split}
\end{equation}

\noindent and by multiplying each side of~\eqref{decimated-system-psi} by $2\pi(\imath \ox)^{d + 1}$ we get a new definition:
\begin{gather}
    \label{new-def-moy-ox-and-delta}
    \begin{split}
        \widetilde{\mOxOy} &\coloneqq 2\pi(\imath \ox)^{d + 1} \cdot \widetilde{c_{\ox}(\psiOy)} = \mOxOy + \delOxOy,\\
        \ox &= M_{1}, 2M_{1},\ldots, (d + 1)M_{1},  \\
        &\text{where by \cref{main-result-for-coefficients-of-gamma-omega-y} we have} \\
        \big\lvert\delOxOy\big\rvert &\leq 2\pi \left( 2d+5 \right) \Big(B_{F} + \Admxi A_{x} \abs{\oy}^{d} \Big)\abs{\ox}^{-1}.
    \end{split}
\end{gather}

We are solving this system for $\ox = M_{1}, 2M_{1},\ldots ,(d + 1)M_{1}$ which leads us to a linear system of order $(d+1) \times (d+1)$
described here:
\begin{equation}
    \label{def-vnd}
    V_{N}^{d} \coloneqq
    \begin{bmatrix}
        1      & N      & N^{2}        & \cdots & N^{d}        \\
        1      & 2N     & (2N)^{2}     & \cdots & (2N)^{d}     \\
        \vdots & \vdots & \vdots       & \vdots & \vdots       \\
        1      & (d+1)N & ((d+1)N)^{2} & \cdots & ((d+1)N)^{d}
    \end{bmatrix}
\end{equation}
\indent Note that $V_{N}^{d}$ is the Vandermonde matrix on $\left\{ N, 2N, \ldots, (d+1)N \right\}$ and thus it is
nondegenerate for all $N\gg 1$.
\newline\newline
Presenting the linear system in~\cref{def-of-moy-mox}:
\begin{gather}
    \label{linear-system-psi}
    \begin{bmatrix}
    (-1)
        ^{M_{1}} m(M_{1}, \oy)\\
        (-1)^{2M_{1}} m(2M_{1}, \oy)               \\
        \vdots\\
        (-1)^{(d+1)M_{1}} m((d+1)M_{1}, \oy) \\
    \end{bmatrix} =
    V_{M_{1}}^{d} \cdot
    \begin{bmatrix}
        \alpha_{0}(\oy)    \\
        \alpha_{1}(\oy)    \\
        \alpha_{2}(\oy)    \\
        \vdots             \\
        \alpha_{d}(\oy) \\
    \end{bmatrix},
\end{gather}
the solution contains the jump magnitudes of $\frac{d^l}{dx^l}\psiOy$ where $l=0,\ldots,d$, but unfortunately $
\mOxOy$ are unknown so we use $\widetilde{m_{\ox}}(\oy)$ and solve the perturbed linear system:
\begin{gather}
    \label{pertrubed-system-psi}
    \begin{bmatrix}
    (-1)^{M_{1}}\widetilde{m}_{M_{1}}(\oy)\\
        (-1)^{2M_{1}}\widetilde{m}_{2M_{1}}(\oy)                \\
        \vdots\\
        (-1)^{(d+1)M_{1}}\widetilde{m}_{(d+1)M_{1}}(\oy) \\
    \end{bmatrix} =
    V_{M_{1}}^{d} \cdot
    \begin{bmatrix}
        \widetilde{\alpha_{0}}(\oy)    \\
        \widetilde{\alpha_{1}}(\oy)    \\
        \widetilde{\alpha_{2}}(\oy)    \\
        \vdots                         \\
        \widetilde{\alpha_{d}}(\oy) \\
    \end{bmatrix}
\end{gather}
\noindent to get the approximation for the jump magnitudes, $\left\{ \widetilde{\Aoyl} \right\}_{l=0,\ldots, d}$, at $x=-\pi$.
\newline
\\
The next and final step is to recover $\psiOy$ by~\cref{final-approximation}:
\begin{gather*}
    \label{recovery-of-psi}
    \widetilde{\psiOy}(x)
    = \sum_{\abs{\ox}\leq M} c_{\ox}(\widetilde{\gammaOy}) e^{\imath\ox x} + \sum_{l=0}^{d} \widetilde{\Aoyl}V_{\psi,l}(x).
\end{gather*}

\subsection{Proof of \cref{thm:main-result-for-psi}}\label{subsec:accuracy-of-psi}
%
This section is dedicated to proving \cref{thm:main-result-for-psi}.

The jump location of $\psiOy$ is known to be at $x=-\pi$ (or at $\pi$), and we define the jump
magnitude of $\ddxk[l](\psiOy)$ as described in~\cref{psi-oy-jump-mag-def}.

Since we assume that for each $\abs{\oy}\leq \Moy$ the jump location is known to be at $\xi=-\pi$, the first step
would be to analyze the accuracy of the jump magnitudes approximation, $\widetilde{\Aoyl}$. Towards this goal, we have the following result.
\begin{lemma}
    \label{moy-tilde-lemma}
    Let $\oyInZ$, $N\in\mathbb{N}$, $0<j\in\mathbb{N}$ and $\mOxOy$ as defined in~\cref{new-def-moy-ox-and-delta}.
    Then there exist constants $\Admxi$ and $B_{F} \geq 0$
    (see~\cref{Admxi,definition-of-Fx-and-jump-location-and-jump-magnitudes-and-assumptions}) such that:
    \begin{gather}
        \abs{\widetilde{m}_{jN}(\oy) - m_{jN}(\oy)} \leq 2\pi \abs{jN}^{-1}
        \left( 2d+5 \right) \Big(B_{F} + \Admxi A_{x} \abs{\oy}^{d} \Big).
    \end{gather}
\end{lemma}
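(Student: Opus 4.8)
The plan is to unwind the definitions in \eqref{new-def-moy-ox-and-delta} and \eqref{decimated-system-psi} and reduce the claim to the already-established bound on $\abs{\delOxOy}$. First I would write, directly from the definition in \cref{new-def-moy-ox-and-delta}, that $\widetilde{m}_{jN}(\oy) = 2\pi(\imath jN)^{d+1}\cdot\widetilde{c_{jN}(\psiOy)}$ while $m_{jN}(\oy) = 2\pi(\imath jN)^{d+1}\cdot c_{jN}(\phiOy)$ (this is exactly the relation $(-1)^{-\ox}m_{\ox}(\oy) = \sum_l \alpha_l(\oy)\ox^l$ combined with \cref{prop:phi-oy-coefficients}, after multiplying \eqref{decimated-system-psi} through by $2\pi(\imath\ox)^{d+1}$). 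Hence
\[
    \widetilde{m}_{jN}(\oy) - m_{jN}(\oy) = 2\pi(\imath jN)^{d+1}\Bigl(\widetilde{c_{jN}(\psiOy)} - c_{jN}(\phiOy)\Bigr).
\]

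The second step is to observe that the given Fourier data are \emph{exact}, i.e.\ $\widetilde{c_{jN}(\psiOy)} = c_{jN}(\psiOy) = \widehat{F}(jN,\oy)$ (recall \eqref{psi-oy-fourier-coefficients}); the only approximation made in the first stage is the substitution of $c_{\ox}(\phiOy)$ for $c_{\ox}(\psiOy)$. Therefore, by the decomposition \eqref{decomposition-corfficient-psi-oy},
\[
    \widetilde{c_{jN}(\psiOy)} - c_{jN}(\phiOy) = c_{jN}(\psiOy) - c_{jN}(\phiOy) = c_{jN}(\gammaOy),
\]
so that $\widetilde{m}_{jN}(\oy) - m_{jN}(\oy) = 2\pi(\imath jN)^{d+1}c_{jN}(\gammaOy)$ and consequently $\abs{\widetilde{m}_{jN}(\oy) - m_{jN}(\oy)} = 2\pi\abs{jN}^{d+1}\abs{c_{jN}(\gammaOy)}$. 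This is precisely the quantity $\abs{\delOxOy}$ appearing in \cref{new-def-moy-ox-and-delta} with $\ox = jN$, so one could even cite that line directly; but it is cleaner to go through \cref{main-result-for-coefficients-of-gamma-omega-y}.

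The third and final step is to invoke \cref{main-result-for-coefficients-of-gamma-omega-y} (whose hypothesis $\gammaOy\in C^{(d+2)}_{\mathbb{T}}$ holds here since $\xi\in C^{\dxi}$ with $\dxi\ge d$, so that $\doy\ge d+1$ and the smooth part has the required regularity), which gives $\abs{c_{jN}(\gammaOy)} \le (2d+5)\bigl(B_{F} + \Admxi A_{x}\abs{\oy}^{d}\bigr)\abs{jN}^{-d-2}$. Multiplying by $2\pi\abs{jN}^{d+1}$ collapses the powers of $jN$ to $\abs{jN}^{-1}$ and yields exactly
\[
    \abs{\widetilde{m}_{jN}(\oy) - m_{jN}(\oy)} \le 2\pi\abs{jN}^{-1}(2d+5)\bigl(B_{F} + \Admxi A_{x}\abs{\oy}^{d}\bigr),
\]
as claimed. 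I do not anticipate a genuine obstacle: the lemma is essentially a bookkeeping restatement of \cref{main-result-for-coefficients-of-gamma-omega-y} after clearing the $(\imath\ox)^{d+1}$ factor, and the only points requiring a word of care are (i) verifying that the data being used are the noiseless Fourier coefficients so that $\widetilde{c_{jN}(\psiOy)} = c_{jN}(\psiOy)$ exactly, and (ii) checking the regularity bookkeeping $d_{\oy}\ge d+1$ needed to apply \cref{main-result-for-coefficients-of-gamma-omega-y}.
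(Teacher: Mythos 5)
Your proof is correct and follows essentially the same route as the paper: the paper's proof simply declares the lemma ``immediate'' from \cref{new-def-moy-ox-and-delta}, whose error term $\delOxOy$ is exactly the quantity $2\pi(\imath\ox)^{d+1}c_{\ox}(\gammaOy)$ you identify, bounded via \cref{main-result-for-coefficients-of-gamma-omega-y}. You have merely spelled out the unwinding that the paper leaves implicit, including the (correct) observation that the given Fourier data are exact so the only error is the smooth remainder $c_{jN}(\gammaOy)$.
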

\begin{proof}
    Immediately follows from~\cref{new-def-moy-ox-and-delta}.
\end{proof}

\begin{proof}[Proof of~\cref{thm:main-result-for-psi}, first estimate]
    Denoting
    $$
    \zeta_{j}\coloneqq (-1)^{j N}(\widetilde{m}_{j N}(\oy) -  m_{j N}(\oy)),
    $$
   \cref{pertrubed-system-psi} implies that  for $N = M_{1}$ the error vector satisfies:
    \begin{gather}
        \label{inverse-perturbed-system-psi}
        \begin{bmatrix}
            \widetilde{\alpha_{0}}(\oy) - \alpha_{0}(\oy)       \\
            \widetilde{\alpha_{1}}(\oy) - \alpha_{1}(\oy)       \\
            \widetilde{\alpha_{2}}(\oy) - \alpha_{2}(\oy)       \\
            \vdots                                              \\
            \widetilde{\alpha_{d}}(\oy) - \alpha_{d}(\oy) \\
        \end{bmatrix} =
        \left( V_{M_{1}}^{d} \right)^{-1} \cdot
        \begin{bmatrix}
            \zeta_{1}      \\
            \zeta_{2}      \\
            \vdots         \\
            \zeta_{d+1} \\
        \end{bmatrix}.
    \end{gather}

    Using~\cref{moy-tilde-lemma} we have that:
    \begin{equation}
        \label{eq:bound-on-zeta-j}
        \begin{split}
            \abs{\zeta_{j}} &= \abs{\widetilde{m}_{j M_{1} }(\oy) -  m_{j M_{1} }(\oy)}\\
            &\leq 2\pi \abs{j M_{1}}^{-1} \leq 2\pi \abs{j M_{1}}^{-1} \left( 2d+5 \right) \Big(B_{F} + \Admxi A_{x} \abs{\oy}^{d} \Big)\\
            &\overset{j\geq 1}{\leq} 2\pi (M_{1})^{-1} \left( 2d+5 \right) \Big(B_{F} + \Admxi A_{x} \abs{\oy}^{d} \Big).
        \end{split}
    \end{equation}

    Going back to~\eqref{def-vnd}, we rewrite $V_{M_{1}}^{d}$ in order to estimate
    $\left( V_{M_{1}}^{d} \right)^{-1}$ as follows:
    \begin{equation*}
        \begin{split}
            V_{M_{1}}^{d} &=
            \begin{bmatrix}
                1      & M_{1}         & \cdots & (M_{1})^{d}           \\
                1      & 2M_{1}        & \cdots & (2M_{1})^{d}        \\
                \vdots & \vdots          & \vdots & \vdots                   \\
                1      & (d+1)M_{1} & \cdots & ((+1)M_{1})^{d}
            \end{bmatrix} \\
            \\
            &=
            \begin{bmatrix}
                1      & 1        & \cdots & 1               \\
                1      & 2        & \cdots & 2^{d}        \\
                \vdots & \vdots   & \vdots & \vdots          \\
                1      & (d+1) & \cdots & (d+1)^{d}
            \end{bmatrix} \cdot diag\left( 1, M_{1}, \ldots, (M_{1})^{d} \right) \\
            \\
            &= V_{1}^{d} \cdot diag\left( 1, M_{1}, \ldots, (M_{1})^{d} \right)
        \end{split}
    \end{equation*}

    \begin{gather}
        \begin{split}
            \Rightarrow \left( V_{M_{1}}^{d} \right)^{-1} =
            \begin{bmatrix}
                1 \\
                & (M_{1})^{-1} \\
                & & \ddots &                 \\
                & &        & (M_{1})^{-d}
            \end{bmatrix}
            \left( V_{1}^{d} \right)^{-1}
        \end{split}.
    \end{gather}
    Inserting the above into~\cref{inverse-perturbed-system-psi} we obtain
    \begin{gather}
        \label{eq:2.39}
        \begin{bmatrix}
            \widetilde{\alpha_{0}}(\oy) - \alpha_{0}(\oy)       \\
            \widetilde{\alpha_{1}}(\oy) - \alpha_{1}(\oy)       \\
            \widetilde{\alpha_{2}}(\oy) - \alpha_{2}(\oy)       \\
            \vdots                                              \\
            \widetilde{\alpha_{d}}(\oy) - \alpha_{d}(\oy) \\
        \end{bmatrix} =
        \begin{bmatrix}
            1 \\
            & (M_{1})^{-1} \\
            & & \ddots &                 \\
            & &        & (M_{1})^{-d}
        \end{bmatrix}
        \left( V_{1}^{d} \right)^{-1}
        \begin{bmatrix}
            \zeta_{1}      \\
            \zeta_{2}      \\
            \vdots         \\
            \zeta_{d+1} \\
        \end{bmatrix}.
    \end{gather}
    Presently, we consider the $l^{th}$ row of $\left( V_{1}^{d} \right)^{-1} = \left( a_{l,1},\ldots, a_{l,d+1} \right)$, and obtain
    \begin{equation*}
        \widetilde{\alpha_{l}}(\oy) - \alpha_{l}(\oy) = (M_{1})^{-l}\sum_{k=1}^{d+1} a_{l,k}\zeta_{k}.
    \end{equation*}
    Denote $C_{l}$ as the upper bound on the absolute sum of the entries of the $l^{th}$ row of $\left( V_{1}^{d} \right)$:
    \begin{equation}
        \label{eq:bound-on-inverse-of-V-1-doy}
        \sum_{k=1}^{d+1} \abs{a_{l,k}} \leq C_{l} \leq C^{*} \coloneqq \max\limits_{0\leq l\leq d+1} \left\{ C_l \right\}.
    \end{equation}
    Using this bound with~\cref{eq:bound-on-zeta-j} we proceed as follows:
    \begin{align*}
        &\abs{\widetilde{\alpha_{l}}(\oy) - \alpha_{l}(\oy)} = \abs{M_{1}^{-l}\sum_{k=1}^{d+1} a_{l,k}\zeta_{k}} \leq M_{1}^{-l}
        \sum_{k=1}^{d+1} \abs{a_{l,k}}\abs{\zeta_{k}} \\
        &\leq M_{1}^{-l} \sum_{k=1}^{d+1} \frac{2\pi}{M_{1}} \left( 2d+5 \right) \Big(B_{F} + \Admxi A_{x} \abs{\oy}^{d} \Big) \abs{a_{l,k}} \\
        &= \frac{2\pi}{(M_{1})^{l+1}} \left( 2d+5 \right) \Big(B_{F} + \Admxi A_{x} \abs{\oy}^{d} \Big) \sum_{k=1}^{d+1} \abs{a_{l,k}} \\
        &\leq 2\pi C_{l} \left( 2d+5 \right) \Big(B_{F} + \Admxi A_{x} \abs{\oy}^{d} \Big) M_{1}^{-l-1} \\
        &\leq \frac{2\pi C_{l} \left( 2d+5 \right) \Big(B_{F} + \Admxi A_{x} \abs{\oy}^{d} \Big)}{{\floor{\frac{
            M}{d+2}}^{l+1}}}.
    \end{align*}
    Assuming $\frac{M}{d+2} - 1 > \frac{M}{2(d+2)} \Leftrightarrow M > 2(d+2)$ we further have:
    \begin{align*}
        \abs{\widetilde{\alpha_{l}}(\oy) - \alpha_{l}(\oy)} &\leq 2\pi C_{l} (2d+4)^{l+1} \left( 2d+5 \right) \Big(B_{F} + \Admxi A_{x} \abs{\oy}^{d} \Big) M^{-l-1} \\
        &\leq 2\pi C_{l} (2d+5)^{l+2}\Big(B_{F} + \Admxi A_{x} \abs{\oy}^{d} \Big) M^{-l-1}.
    \end{align*}
    Now let us define the constants
    \begin{align}
        \label{h-func}
            \h &\coloneqq \h(d, B_{F}, \mxi) \text{ s.t. } B_{F} + \Admxi A_{x} \abs{\oy}^{d} \leq \h A_{x} \abs{\oy}^{d},\\
        \label{hl-func}
            \hl &\coloneqq \hl (d,B_{F}, \mxi) = 2\pi C_{l} (2d+5)^{l+2} \h.
    \end{align}
%
    This implies
    \begin{gather*}
        \abs{\widetilde{\alpha_{l}}(\oy) - \alpha_{l}(\oy)} \leq \hl A_{x} \abs{\oy}^{d} M^{-l-1}
    \end{gather*}
    and
    \begin{align}
        \abs{\widetilde{\alpha_{l}(\oy)} - \alpha_{l}(\oy)} &= \abs{{\imath}^{d+1-l}
            \widetilde{A_{d-l}^{\psi}(\oy)}-{\imath}^{d+1-l}A_{d-l}^{\psi}(\oy)}
        \notag\\
        &= \abs{\widetilde{A_{d-l}^{\psi}(\oy)}-A_{d-l}^{\psi}(\oy)}. \notag
    \end{align}
    Finally we have:
    \begin{align}
        \begin{split}
            \abs{\widetilde{A_{d-l}^{\psi}(\oy)}-A_{d-l}^{\psi}(\oy)} &\leq \hl A_{x} \abs{\oy}^{d} M^{-l-1}, \\
            \text{for }l=0,\ldots,&d
        \end{split}
    \end{align}
    and by reindexing, we have
    \begin{gather}
        \begin{split}
            \abs{\widetilde{\Aoyl}-\Aoyl} &\leq \hl[d-l] A_{x} \abs{\oy}^{d} M^{l-d-1}, \\
            \text{for }l=0,\ldots,&d.
        \end{split}
    \end{gather}
    completing the proof.
\end{proof}

\begin{proof}[Proof of~\cref{thm:main-result-for-psi}, second estimate]
Recall that by ~\cref{eq:decomposition-psi-omega-y} we have $\gammaOy=\psiOy-\phiOy$ with $\gammaOy$ being $d$-times everywhere continuously differentiable  function. We denote
\begin{equation}
    \label{eq:psi-oy-mox}
    \psi_{\oy,M}(x) \coloneqq \sum_{\abs{\ox}\leq M} \left( c_{\ox}(\psiOy) - c_{\ox}(\phiOy) \right)e^{\imath x\ox} + \phiOy(x).
\end{equation}
We will estimate the pointwise approximation error of $\psiOy$ by bounding each term in the decomposition:
\begin{equation}
    \label{eq:err-final-approx-psi}
    \abs{\widetilde{\psiOy}(x)-\psiOy(x)} \leq \abs{\widetilde{\psiOy}(x)-\psi_{\oy,M}(x)} + \abs{\psi_{\oy, M}(x)-\psiOy(x)}.
\end{equation}

\noindent We commence by estimating the first term in the right-hand side of~\cref{eq:err-final-approx-psi}. Starting with a definition:
\begin{gather*}
    \Theta_{d}^{*}(\oy) \coloneqq \widetilde{\phiOy} - \phiOy,
\end{gather*}
we have
    \begin{equation*}
        \begin{split}
            \abs{\widetilde{\psiOy}(x)-\psi_{\oy, M}(x)} &= \abs{\sum_{\abs{\ox}\leq M}\left( c_{\ox}(\phiOy)-c_{\ox}(\widetilde{\phiOy})  \right)
                e^{\imath x\ox} + \left( \widetilde{\phiOy(x)} - \phiOy(x) \right)} \\
            &= \abs{\Theta_{d}^{*}(\oy)(x) - \sum_{\abs{\ox}\leq M}c_{\ox}(\Theta_{d}^{*}(\oy))e^{\imath x\ox}} \\
            &= \abs{\sum_{\abs{\ox}> M}c_{\ox}(\Theta_{d}^{*}(\oy))e^{\imath x\ox} }
        \end{split}
    \end{equation*}

    Using~\cref{phi-omega-y}
    we also have
    \begin{gather*}
        \Theta_{d}^{*}(\oy)(x) = \sum_{l=0}^{d} \left( \widetilde{A_{l}^{\psi}}(\oy) - \Aoyl \right)\Vpsi(x),
    \end{gather*}
    \indent thus:
    \begin{align*}
        c_{\ox}(\Theta_{d}^{*}(\oy)) &= c_{\ox} \left( \sum_{l=0}^{d} \left( \widetilde{A_{l}^{\psi}}(\oy) -
        \Aoyl \right) \Vpsi(x) \right) \\
        &= \left(\sum_{l=0}^{d} \left( \widetilde{A_{l}^{\psi}}(\oy) -\Aoyl \right)c_{\oy}(\Vpsi) \right).
    \end{align*}
    Using $\Vpsi \in C_{\mathbb{T}}^{l}$ and a well-known estimate (see~\cite[Section.~3,p.~27]{gottlieb1977numerical})
    we claim that there exist positive constants $\{T_{l}\}_{l=0}^{d}$ such that:
    \begin{gather}\label{T-star}
        \abs{\sum_{\abs{\ox}> M}c_{\ox}(\Vpsi)e^{\imath x \ox}} \leq T_{l} M^{-l}
        \leq T^{*} M^{-l}
    \end{gather}
    where $T^{*} \coloneqq \max\limits_{0\leq l\leq d} \left\{ T_l \right\}$
    and therefore together with~\cref{hl-func} we get:
    \begin{align*}
        \abs{\sum_{\abs{\ox}> M} c_{\ox} (\Theta_{d}^{*}(\oy)) e^{\imath x \ox}} &\leq
        2\pi T_{l} C_{d-l} \left( 2d+5 \right) \Big(B_{F} + \Admxi A_{x} \abs{\oy}^{d} \Big) (M_{1})^{l-d-1} M^{-l} \\
        &\leq \frac{2\pi T_{l} C_{d-l} \left( 2d+5 \right) \Big(B_{F} + \Admxi A_{x} \abs{\oy}^{d} \Big)}{(M_{1})^{d+1-l} M^{l}} \\
        &\leq \frac{2\pi T_{l} C_{d-l} \left( 2d+5 \right) \Big(B_{F} + \Admxi A_{x} \abs{\oy}^{d} \Big)}{(\frac{M}{d+2}-1)^{d+1-l} M^{l}} \\
        &\leq \frac{2\pi T_{l} C_{d-l} \left( 2d+5 \right) \Big(B_{F} + \Admxi A_{x} \abs{\oy}^{d} \Big)}{(\frac{M}{2(d+2)})^{d+1-l} M^{l}} \\
        = \Bigg( 2\pi T_{l} C_{d-l} &\left( 2d+5 \right) \Big(B_{F} + \Admxi A_{x} \abs{\oy}^{d} \Big) (2d+4)^{d+1-l} \Bigg)M^{-d-1} \\
        \leq \Bigg( 2\pi T_{l} C_{d-l} &\Big(B_{F} + \Admxi A_{x} \abs{\oy}^{d} \Big) (2d+5)^{d+2-l} \Bigg) M^{-d-1} \\
        &\leq T_{l} \hl[d-l] A_{x} \abs{\oy}^{d} M^{-d-1} \leq T^{*} \hl[d-l] A_{x} \abs{\oy}^{d} M^{-d-1}.
    \end{align*}

    Denoting
    \begin{gather}\label{ht-star}
        \hl[T]^{*} \coloneqq T^{*} \cdot \max\limits_{0\leq l\leq d}\left\{ \hl \right\}
    \end{gather}
    we get
    \begin{equation} \label{eq:first-part-of-final-error-psi}
        \abs{\widetilde{\psiOy}(x)-\psi_{\oy, M}(x)} \leq \hl[T]^{*} A_{x} \abs{\oy}^{d} M^{-d-1}.
    \end{equation}

    \noindent Now we estimate the second term in the right-hand side of~\cref{eq:err-final-approx-psi}. Combining~\cref{eq:decomposition-psi-omega-y} and~\cref{eq:psi-oy-mox} we have:
    \begin{equation}\label{eq:second-part-of-final-error-psi}
            \abs{\psi_{\oy, M}(x) - \psiOy(x)} 
            = \abs{\sum_{\abs{\ox} > M} c_{\ox}(\gammaOy) e^{\imath x\ox}}.
    \end{equation}
    Using the same analysis as in~\cite[p.301]{batenkov2012algebraic} together with~\cref{main-result-for-coefficients-of-gamma-omega-y,eq:decomposition-psi-omega-y} implies that there exists a constant $R$ s.t.:
    \begin{gather}\label{R-gamma-oy}
        \begin{split}
            \abs{\psi_{\oy, M}(x) - \psiOy(x)} &\leq R \left( 2d+5 \right) \Big(B_{F} + \Admxi A_{x} \abs{\oy}^{d} \Big) M^{-d-1} \\
            &\leq R \left( 2d+5 \right) \h A_{x} M^{-d-1}.
        \end{split}
    \end{gather}
    Combining~\cref{eq:first-part-of-final-error-psi,eq:second-part-of-final-error-psi} we get:
    \begin{align*}
        \abs{\widetilde{\psiOy}(x) - \psi_{\oy, M}(x)} &\leq \hl[T]^{*} A_{x} \abs{\oy}^{d} M^{-d-1} = \\
        &=\left( \hl[T]^{*} + R \left( 2d+5 \right) \h \right) A_{x} \abs{\oy}^{d} M^{-d-1} + R \left( 2d+5 \right) \h A_{x} M^{-d-1}.
    \end{align*}
    Denoting:
    \begin{gather}\label{r-gamma-oy-star-func}
        R_{d} \coloneqq (2d+5) R
    \end{gather}
    we can conclude that for $\oyInZ$ and $x\in\torus[1]$ we have:
    \begin{gather} \label{psi-tilde-minus-psi}
        \abs{\widetilde{\psiOy}(x) - \psiOy(x)} \leq \left( \hl[T]^{*} + R_{d}\h \right) A_{x} \abs{\oy}^{d} M^{-d-1}.
    \end{gather}

    This completes the proof of \cref{thm:main-result-for-psi}.
\end{proof}

\subsection{The second stage}\label{subsec:reconstruction-of-f_at_x}

So far, we have described the reconstruction process of $\left\{ \widetilde{\psiOy}(x) \right\}_{\abs{\oy}\leq N}$ for
a given $x\in\torus[1]$ by applying \cref{alg:full-order-algorithm} to the given data
$\left\{ \widehat{F}(\ox,\oy) \right\}_{\abs{\ox}\leq M}$ for each $\abs{\oy} \leq N$.
In the second stage, we will use these approximated values of $\widehat{\Fx}(\oy)$ (recall that $\psiOy(x) = \widehat{\Fx}(\oy)$)
to recover $\Fx(y)$ for each $y\in\torus[1]$.
This way, we eventually recover $F_x(y)$ for each $x,y\in\torus[1]$, and thus recover $F$ itself.
(In practice, this can be done for a finite number of $x$ and $y$).
Now we present the decomposition for a \textit{slice} of $F$ at $x\in\torus[1]$:
\begin{proposition}
    \label{decomposition-of-Fx}
    Let $x\in\torus[1]$, then
    \begin{gather}
        \Fx(y) =\gammax(y) + \Phidx(y),\quad \forall y \in \torus[1],
    \end{gather}
where $\gammax \in \C$, and $\Phidx$ is a piecewise polynomial of degree $d$ “absorbing” all the discontinuities of $\Fx$ and its $d + 1$ derivatives.
\end{proposition}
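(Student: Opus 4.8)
The plan is to repeat, slice-wise and in the $y$-variable, the explicit Eckhoff-type splitting used for $\psiOy$ in \eqref{eq:decomposition-psi-omega-y}--\eqref{phi-omega-y}. By the Remark following \cref{description-of-F-and-xi}, for each fixed $x\in\torus[1]$ the slice $\Fx$ lies in $\PC$ with a single jump at $\yx=\xix$, and by \cref{definition-of-Fx-and-jump-location-and-jump-magnitudes-and-assumptions} the jump magnitude of $\ddyk[l]\Fx$ at $\yx$ is $\Al$. So I would first put
\[
    \Phidx(y)\;\coloneqq\;\sum_{l=0}^{d}\Al\,V_{l}(y;\yx),
\]
with $V_{l}(\cdot\,;\yx)$ the periodized Bernoulli building block of \eqref{v-psi-def}, and then show $\gammax\coloneqq\Fx-\Phidx\in\C$.

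The one step with genuine content is the jump structure of the $V_{l}$'s. Differentiating \eqref{v-psi-def} via $B_{n}'=nB_{n-1}$ (equivalently, $\frac{d}{dy}V_{l}(\cdot\,;y_{0})=V_{l-1}(\cdot\,;y_{0})$ and $\frac{d}{dy}V_{0}(\cdot\,;y_{0})\equiv-\frac{1}{2\pi}$) gives on $(y_{0},y_{0}+2\pi)$
\[
    \ddyk[j]V_{l}(y;y_{0})\;=\;-\frac{(2\pi)^{\,l-j}}{(l+1-j)!}\,B_{l+1-j}\!\left(\tfrac{y-y_{0}}{2\pi}\right),\qquad 0\le j\le l+1,
\]
which is a polynomial of degree $l-j$ for $j\le l$, a constant for $j=l+1$, and $0$ for $j>l+1$. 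Evaluating the jump of the $2\pi$-periodization at $y_{0}$ and using the classical identities $B_{1}(1)-B_{1}(0)=1$, $B_{n}(1)-B_{n}(0)=0$ for $n\ge 2$, I would obtain: the jump of $\ddyk[j]V_{l}(\cdot\,;y_{0})$ at $y_{0}$ equals $\delta_{j,l}$ for $0\le j\le l$; $\ddyk[l+1]V_{l}(\cdot\,;y_{0})$ is a $2\pi$-periodic constant; all higher derivatives vanish. In particular $V_{l}(\cdot\,;y_{0})$ is $2\pi$-periodic and polynomial off $y_{0}+2\pi\mathbb{Z}$, hence on $\torus[1]$ a piecewise polynomial with its only break at $y_{0}$ (and if $y_{0}=\pm\pi$ the periodization is smooth there by construction, so the same holds).

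Granting this, linearity gives that the jump of $\ddyk[j]\Phidx$ at $\yx$ equals $\sum_{l=0}^{d}\Al\,\delta_{j,l}=A_{j}(x)$ for every $j=0,\dots,d$, so $\Phidx$ absorbs exactly the jumps of $\Fx,\Fx',\dots,\ddyk[d]\Fx$, while $\ddyk[d+1]\Phidx$ is a constant. Hence $\gammax=\Fx-\Phidx$ is $2\pi$-periodic; its derivatives of order $\le d$ have vanishing jump at $\yx$ and are continuous elsewhere (there $\Fx$ has $d$ continuous derivatives and $\Phidx$ is a polynomial), so $\gammax\in C^{d}(\torus[1])$ with all those derivatives periodic; and $\gammax^{(d+1)}=\ddyk[d+1]\Fx$ minus a constant, hence piecewise continuous and piecewise differentiable since $\ddyk[d+1]\Fx$ is. That is precisely $\gammax\in\C$. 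Finally, any nonzero element of the span of $\{V_{l}(\cdot\,;\yx)\}_{l=0}^{d}$ has a nonvanishing jump in some derivative of order $\le d$, so the decomposition, and with it $\Phidx$, is uniquely pinned down by $\{A_{l}(x)\}_{l=0}^{d}$.

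I do not foresee a real obstacle: this is a verbatim slice-wise copy of the one-dimensional decomposition behind \eqref{phi-omega-y} and \cref{prop:phi-oy-coefficients}, and its single non-routine ingredient — the Bernoulli jump identities together with the mild bookkeeping around the periodization at $y=\pm\pi$ versus at $\yx$ — is classical.
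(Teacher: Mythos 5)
Your proof is correct and follows essentially the same route the paper takes (implicitly, via Eckhoff's decomposition): defining $\Phidx$ by \eqref{eq:phidx} and verifying via $B_n'=nB_{n-1}$ and the Bernoulli endpoint identities that the jump of the $j$-th derivative of $V_l(\cdot;\yx)$ at $\yx$ is $\delta_{j,l}$, exactly the ingredients the paper records for the $x$-variable analogue in \cref{lem:phi-oy-derivatives} and \eqref{eq:2.17}--\eqref{eq:2.18}. The paper states the proposition without a separate proof, so your write-up simply makes explicit the slice-wise repetition it alludes to.
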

In particular, $\Phidx$ has a single discontinuity at $y_{x}=\xix$ and therefore it is uniquely determined by $\left\{ A_{l}(x) \right\}_{0\leq l \leq d}$ and $y_{x}$ as follows:
\begin{align} \label{eq:phidx}
    \Phidx(y) &= \sum_{l=0}^{d}{A_{l}(x)V_{l}\left( y;\yx \right)}.
\end{align}
Here, again, $V_l(y; \yx)$ is the periodic Bernoulli polynomial, as in~\cref{v-psi-def}.
Furthermore, we have that $\gammax\in\C$ which in turn by~\cite[Vol.~I, Chapter 3, Theorem 13.6]{zygmund2002trigonometric}
gives us, for $x\in\torus[1]$, a constant $R_{\gammax}$ s.t.:
\begin{equation} \label{R-gammax}
    \abs{c_{\oy}(\gammax)} \leq R_{\gammax}\abs{\oy}^{-d-2}.
\end{equation}
We assume a uniform bound over the Fourier coefficients of $\gammax$ as follows:
\begin{equation} \label{eq:absolut-bound-for-gamma}
    |c_{\oy}(\gammax)| \leq R_{\gammax}\abs{\oy}^{-d-2}\leq R_{\Gamma_{\mathbb{T}}}\abs{\oy}^{-d-2},\quad\forall x\in\torus[1].
\end{equation}
\\
\noindent Now we denote:
\begin{gather}
    \label{kappa-x0}
    \kappax \coloneqq e^{-\imath \xix}
\end{gather}

\noindent and get:
\begin{proposition}
    \label{prop:phidx-order-d-coefficients}
    Let $x\in\torus[1]\text{ and }\Phidx$ as given by~\eqref{eq:phidx}, then for $\oyInZ$:
    \begin{equation} \label{eq:phidx-coefficients}
        c_{\oy}(\Phidx) =
        \begin{cases}
            0 &\textrm{if } \oy=0, \\
            \frac{\left( \kappax \right)^{\ox}}{2\pi} \sum_{l=0}^{d} \frac{A_{l}(x)}{\left( \imath \oy \right)
            ^{\left( l+1 \right)}} &\textrm{ otherwise.}
        \end{cases}
    \end{equation}
\end{proposition}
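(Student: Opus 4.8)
The plan is to reduce the claim to the already-established Proposition~\ref{prop:phi-oy-coefficients} via the elementary translation property of Fourier coefficients, exploiting that $\Phidx$ is assembled from the \emph{same} periodic Bernoulli building blocks $V_l$, merely recentered at $y_x=\xix$ instead of at $-\pi$.

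First I would record that, by~\cref{v-psi-def}, the map $y\mapsto V_l(y;y_0)$ is a single fixed $2\pi$-periodic function $\widetilde V_l(t):=-\frac{(2\pi)^l}{(l+1)!}B_{l+1}\!\left(\frac{t}{2\pi}\right)$ (extended periodically from $[0,2\pi)$), evaluated at $t=y-y_0$; that is, $V_l(\cdot;y_0)=\widetilde V_l(\cdot-y_0)$ as functions on $\torus[1]$. Hence $c_{\oy}\!\left(V_l(\cdot;y_0)\right)=e^{-\imath\oy y_0}\,c_{\oy}(\widetilde V_l)$. Specializing to $y_0=-\pi$ and comparing with Proposition~\ref{prop:phi-oy-coefficients} — which computes precisely the coefficients of $\phiOy=\sum_{l=0}^{d}\Aoyl V_l(\cdot;-\pi)$ — the phase $e^{-\imath\oy(-\pi)}=(-1)^{\oy}$ appearing there is exactly the translation factor, so, matching the linear dependence on each $\Aoyl$ on both sides, we read off $c_{\oy}(\widetilde V_l)=\frac{1}{2\pi(\imath\oy)^{l+1}}$ for $\oy\neq 0$ and $c_0(\widetilde V_l)=0$. (Equivalently this is the classical Fourier expansion of Bernoulli polynomials, the $\oy=0$ value coming from $\int_{0}^{1}B_{l+1}(t)\,dt=0$.)

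Next I would apply the translation identity with $y_0=y_x=\xix$, so that $e^{-\imath\oy y_x}=\kappax^{\oy}$ by~\cref{kappa-x0}, giving $c_{\oy}\!\left(V_l(\cdot;y_x)\right)=\frac{\kappax^{\oy}}{2\pi(\imath\oy)^{l+1}}$ for $\oy\neq 0$ and $0$ for $\oy=0$. Summing over $l$ against the weights $A_l(x)$ as in~\eqref{eq:phidx} and using linearity of $f\mapsto c_{\oy}(f)$ then yields the stated formula (with the phase being $\kappax^{\oy}$, matching the $\oy$-dependence of the jump location $y_x$).

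This step is essentially bookkeeping and I anticipate no genuine obstacle; the two points deserving a little care are (i) checking that $V_l(\cdot;y_0)$ is truly a rigid translate of one periodic function — i.e. that the periodic extension prescribed in~\cref{v-psi-def} is compatible across different base points $y_0$ — and (ii) the $\oy=0$ case, where one must observe that a periodic Bernoulli polynomial of degree $\ge 1$ has zero mean over a period, so that $\Phidx$ is mean-zero and $c_0(\Phidx)=0$.
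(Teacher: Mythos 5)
Your proof is correct, but it takes a genuinely different route from the paper's. The paper disposes of \cref{prop:phidx-order-d-coefficients} with the single line ``repeat the proof of \cref{prop:phi-oy-coefficients}'', i.e.\ it re-runs the integration-by-parts argument: $d+1$ successive integrations by parts, each producing a boundary term controlled by the jump relations of the periodic Bernoulli polynomials (the analogue of \cref{lem:phi-oy-derivatives} for $\Phidx$), now with the jump sitting at the interior point $y_x=\xi(x)$ rather than at $\pm\pi$, which is where the phase $\kappax^{\oy}=e^{-\imath\oy\xi(x)}$ enters. You instead observe that $V_l(\cdot\,;y_0)$ is a rigid translate of one fixed $2\pi$-periodic function $\widetilde V_l$ and invoke the translation rule $c_{\oy}\left(f(\cdot-y_0)\right)=e^{-\imath\oy y_0}c_{\oy}(f)$, reducing everything to the already-known coefficients of $\phiOy$ (equivalently, to the classical Fourier expansion of the Bernoulli polynomials). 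Your route is shorter and makes transparent why the only change relative to \cref{prop:phi-oy-coefficients} is the replacement of the phase $(-1)^{\oy}=e^{\imath\oy\pi}$ by $\kappax^{\oy}$; it also gives a cleaner justification of the $\oy=0$ case via $\int_0^1 B_{l+1}(t)\,dt=0$ for $l\geq 0$, which the paper's proof of \cref{prop:phi-oy-coefficients} handles by fiat (``as we defined $c_0(\phiOy)\equiv 0$''). The one step worth making fully explicit is how you ``read off'' $c_{\oy}(\widetilde V_l)$ from \cref{prop:phi-oy-coefficients}: either apply that proposition with the jump magnitudes taken to be unit vectors $A_j^{\psi}=\delta_{jl}$, or simply quote the standard Fourier series of $B_{l+1}$, as you note parenthetically. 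Your computation also confirms that the exponent $\ox$ on $\kappax$ in \eqref{eq:phidx-coefficients} is a typo for $\oy$.
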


\begin{proof}
    Repeat the proof for~\cref{prop:phi-oy-coefficients}.
\end{proof}

\noindent Using~\cref{def-of-psi-omega-y} for each $x\in\torus[1]\text{ and }\oyInZ$ we denote:
\begin{gather}
    \label{c-omega_y-equals-psi-omega-y}
    c_{\oy}(\Fx) \coloneqq \psiOy(x)
\end{gather}

\noindent and from~\cref{psi-tilde-minus-psi} we get:
\begin{gather} \label{psi-omega-y-tilde-with-delta}
    \begin{split}
        \widetilde{\psiOy}(x)&\coloneqq \psiOy(x) + \delOyx,\quad \textrm{where} \\
        \abs{\delOyx} &\leq \left(\hl[T]^{*} + R_{d} \h \right) A_{x} \abs{\oy}^{d} M^{-d-1}.
    \end{split}
\end{gather}

\noindent In order to recover the parameters $\yx=\xix \text{ and } \left\{ A_{l}(x) \right\}_{l=0}^{d}$ of $\Fx$
and to approximate $\Fx(y)\text{ at } y\in\torus[1]$ we would ideally need to have access to $\left\{ c_{\oy}(\Fx) \right\}$ for
$\abs{\oy}\leq N$
which are not available.
Instead we use the approximation of $c_{\oy}(\Fx)$ which is given in~\cref{psi-omega-y-tilde-with-delta} by $\widetilde{\psiOy(x)}$.
To apply decimation once again, we denote:
\begin{gather*}
    N_{1} \coloneqq \floor*{\frac{N}{d+2}} \notag.
\end{gather*}

\noindent Using~\cref{decomposition-of-Fx} we further denote:
\begin{gather}
    \label{eq:2.56}
    \cOyFx \coloneqq c_{\oy}(\gammax) + c_{\oy}(\Phidx)
\end{gather}
and by~\cref{c-omega_y-equals-psi-omega-y,psi-omega-y-tilde-with-delta} we have:
\begin{align*}
    \cOyFx &= \psiOy(x) = c_{\oy}(\gammax) + c_{\oy}(\Phidx)
\end{align*}
which gives us:
\begin{align*}
    \cOyFx &= \widetilde{\psiOy}(x) - \delOyx = c_{\oy}(\gammax) + c_{\oy}(\Phidx),
\end{align*}
therefore
\begin{gather} \label{psi-oy-at-x0-tilde}
    \widetilde{\psiOy}(x) = c_{\oy}(\gammax) + c_{\oy}(\Phidx) + \delOyx
\end{gather}
where $\delOyx$ is given by~\cref{psi-omega-y-tilde-with-delta}

Now for every $x\in\torus[1]$ we denote:
\begin{align}
    \alpha_{l}(x) &= {\imath}^{l} A_{d-l}(x) \label{alpha-l-x}\\
    \moyx &\coloneqq {\kappax}^{\oy}\sum_{l=0}^{d} \alpha_{l}(x){\oy}^{l} \label{moy-at-x}
\end{align}
Multiplying both sides of~\cref{psi-oy-at-x0-tilde} by $2\pi \left( \imath \oy \right)^{d+1}$ and using \cref{alpha-l-x,moy-at-x} together with~\cref{prop:phidx-order-d-coefficients} we get:
\begin{align*}
    2\pi(\imath\oy)^{d+1} \widetilde{\psiOy}(x) &= 2\pi(\imath\oy)^{d+1}c_{\oy}(\Phidx) + 2\pi(\imath\oy)^{d+1}
    \left( c_{\oy}(\gammax) + \delOyx \right) \\
    & = \kappax^{\oy}\sum_{l=0}^{d} (\imath \oy)^{d-l}A_{l}(x_0) + 2\pi(\imath\oy)^{d+1} \left( c_{\oy}(\gammax)
    + \delOyx \right) \\
    &= \moyx + 2\pi(\imath\oy)^{d+1} \left( c_{\oy}(\gammax) + \delOyx \right).
\end{align*}

Next we define:
\begin{gather} \label{moy-tiled-at-x}
    \widetilde{m}_{\oy}(x) \coloneqq \moyx + \epsOyx
\end{gather}
where $\epsOyx = 2\pi(\imath\oy)^{d+1} \left( c_{\oy}(\gammax) + \delOyx \right)$ and:
\begin{equation} \label{eq:eps-at-x-of-oy-bound}
    \begin{split}
        \abs{\epsOyx} &= 2\pi\abs{\oy}^{d+1}\abs{ \left( c_{\oy}(\gammax) + \delOyx \right)}\\
        &\leq 2\pi\abs{\oy}^{d+1} \left( \abs{c_{\oy}(\gammax)} + \abs{\delOyx} \right)\\
        &\text{using~\cref{eq:absolut-bound-for-gamma,psi-omega-y-tilde-with-delta} we continue}\\
        &\leq 2\pi \left( R_{\Gamma_{\mathbb{T}}} \abs{\oy}^{-1} + \left(\hl[T]^{*} + R_{d} \h \right) A_{x} \abs{\oy}^{2d+1} M^{-d-1} \right).
    \end{split}
\end{equation}

\noindent The decimated system in~\cref{moy-at-x} is solved in two steps.
First, a polynomial equation $q_{N}^{d}(u)=0$ is constructed from the perturbed
coefficients $\left\{ \widetilde{\psiOy}(x) \right\}_{\oy = N_{1}, 2N_{1},\ldots, (d+2)N_{1}}$ and denoted by:
\begin{gather}
    \label{qnd-for-fx}
    q_{N_{1}}^{d}(u) = \sum_{j=0}^{d+1} (-1)^{j} \binom{d+1}{j} \widetilde{m}_{(j+1)N_{1}}(x) u^{d+1-j}.
\end{gather}

\begin{definition}
    \label{pnd-for-fx}
    Let $x\in\torus[1]$, then
    \begin{equation}
        \label{eq:2.61}
        p_{N_1}^{d}(u) \coloneqq \sum_{j=0}^{d+1} (-1)^{j} \binom{d+1}{j} m_{(j+1)N_1}(x) u^{d+1-j}.
    \end{equation}
\end{definition}

Therefore, $q_{N_{1}}^{d}$ is a perturbation of $p_{N_{1}}^{d}(u)$ which is constructed from the unperturbed and unknown values of
$\left\{ \cOyFx \right\}$.

\begin{proposition}
    \label{root-of-pnd}
    The point $u = \kappax^{N_{1}} = e^{-\imath\xix N_{1}}$ is a root of $p_{N_1}^{d}.$
\end{proposition}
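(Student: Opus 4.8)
The plan is to substitute $u = \kappax^{N_{1}}$ into the definition of $p_{N_{1}}^{d}$ in \cref{eq:2.61} and to show that the resulting expression vanishes identically. First I would expand $m_{(j+1)N_{1}}(x)$ via \cref{moy-at-x}, namely
\[
    m_{(j+1)N_{1}}(x) = \kappax^{(j+1)N_{1}}\sum_{l=0}^{d}\alpha_{l}(x)\,\bigl((j+1)N_{1}\bigr)^{l},
\]
and note that $u^{d+1-j} = \bigl(\kappax^{N_{1}}\bigr)^{d+1-j} = \kappax^{N_{1}(d+1-j)}$. The powers of $\kappax$ then combine to $\kappax^{N_{1}(j+1)}\,\kappax^{N_{1}(d+1-j)} = \kappax^{N_{1}(d+2)}$, which does not depend on the summation index $j$ and can be pulled out in front. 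Interchanging the two finite sums over $j$ and $l$ yields
\[
    p_{N_{1}}^{d}\bigl(\kappax^{N_{1}}\bigr) = \kappax^{N_{1}(d+2)}\sum_{l=0}^{d}\alpha_{l}(x)\,N_{1}^{\,l}\sum_{j=0}^{d+1}(-1)^{j}\binom{d+1}{j}(j+1)^{l}.
\]

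The crux is then the classical finite-difference identity: for any polynomial $Q$ of degree strictly less than $n$, the alternating binomial sum $\sum_{j=0}^{n}(-1)^{j}\binom{n}{j}Q(j)$ equals (up to sign) the $n$-th forward difference of $Q$ and therefore vanishes. Applying it with $n = d+1$ and $Q(j) = (j+1)^{l}$, which is a polynomial in $j$ of degree $l \le d < d+1$, every inner sum over $j$ is zero, so $p_{N_{1}}^{d}\bigl(\kappax^{N_{1}}\bigr) = 0$, which is exactly the assertion.

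The argument is essentially bookkeeping and I do not expect a genuine obstacle; the only points that need care are the exponent arithmetic for $\kappax$ and, crucially, the strictness of $l \le d < d+1$. This is precisely why the decimated polynomial is built from the $d+2$ samples $\widetilde{m}_{N_{1}}(x),\ldots,\widetilde{m}_{(d+2)N_{1}}(x)$ — equivalently, why $p_{N_{1}}^{d}$ has degree $d+1$ in $u$: having one node more than the $d+1$ unknowns $\{A_{l}(x)\}_{l=0}^{d}$ is exactly what lets the $(d+1)$-st difference annihilate each monomial contribution $(j+1)^{l}$.
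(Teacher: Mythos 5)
Your proposal is correct and follows essentially the same route as the paper's proof: substitute $u=\kappax^{N_{1}}$, factor out $\kappax^{N_{1}(d+2)}$, interchange the sums, and annihilate each term via the $(d+1)$-st forward difference of the polynomial $(j+1)^{l}$ of degree $l\le d$. No gaps; the exponent bookkeeping and the degree condition are handled exactly as in the paper.
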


\begin{proof}
    Denoting $z = \kappax^{N_{1}}$ we get:
    \begin{align*}
        p_{N_1}^{d}(z) &= \sum_{j=0}^{d+1} (-1)^{j} \binom{d+1}{j} m_{(j+1)N_{1}}(x) z^{d+1-j} \\
        &= \sum_{j=0}^{d+1} (-1)^{j} \binom{d+1}{j} \left( z^{j+1}\sum_{l=0}^{d} \alpha_{l}(x){(j+1)N_{1}}^{l} \right) z^{d+1-j} \\
        &= \sum_{j=0}^{d+1} (-1)^{j} \binom{d+1}{j} z^{d+2} \left( \sum_{l=0}^{d} \alpha_{l}(x) (j+1)^{l} N_{1}^{l} \right) \\
        &= z^{d+2}\sum_{l=0}^{d} \alpha_{l}(x)N_{1}^{l} \left\{ \sum_{j=0}^{d+1} (-1)^{j}\binom{d+1}{j} (j+1)^{l}\right\}
    \end{align*}\\
    Now we notice that if we apply the forward difference operator $d+1$ times over the polynomial $f(x) = x^{l}$ we get:
    \begin{gather*}
        \delta^{d+1}f(x) = \sum_{j=0}^{d+1} (-1)^{j} \binom{d+1}{j} \left( j+1 \right)^{l}
    \end{gather*}
    but because $l < d+1 $ using~\cite[p.~6]{boole1872treatise} we conclude that the above equals 0, implying that $p_{N_1}^{d}(u)=0$.
\end{proof}

Now we understand that one of the roots of $p_{N_{1}}^{d}(u)$ is $u=\kappa_{x}^{N_{1}}$ and thus, by solving the
perturbed equation $q_{N_{1}}^{d}(u)=0$ we will
recover the approximation of $\kappax^{N_{1}}$, which is $\widetilde{\kappax}^{N_{1}} = e^{-i\widetilde{\xix}N_{1}}$ and by
extracting the $N_{1}^{th}$ root and subsequently taking logarithm we obtain the approximation of the jump location
of $\Fx$, which is $\widetilde{\yx} = \widetilde{\xix}$.
The operation of taking root generally results in a multi-valued solution, therefore to ensure correct
reconstruction, we additionally assume that the jump $\xix$ must be known with \textit{a priori} accuracy of
order $o\left((N_{1})^{-1}\right)$, which is valid because it is always possible to apply the half order algorithm such as
in~\cite[p.~4]{batenkov2015complete} in order to achieve the approximation of the jump location with the assumed accuracy.

Now the second step is to recover the jump magnitudes $\left\{ A_{l}(x) \right\}_{l=0}^d$ by solving the linear system of equations in ~\cref{vnd-system-Fx}
using the approximated jump location $\widetilde{\kappax}$.

\begin{definition}
    \label{vnd-Fx}
    Let $V_{N}^{d}$ denote the $(d+1)\times (d+1)$ matrix
    \begin{equation}
        \label{eq:2.62}
        V_{N}^{d} \coloneqq
        \begin{bmatrix}
            1      & N      & N^{2}        & \cdots & N^{d}        \\
            1      & 2N     & (2N)^{2}     & \cdots & (2N)^{d}     \\
            \vdots & \vdots & \vdots       & \vdots & \vdots       \\
            1      & (d+1)N & ((d+1)N)^{2} & \cdots & ((d+1)N)^{d}.
        \end{bmatrix}
    \end{equation}
\end{definition}
\indent Note that $V_{N}^{d}$ is the Vandermonde matrix on $\left\{ N, 2N, \ldots, (d+1)N \right\}$ and thus it is nondegenerate for all $0\neq N\in\mathbb{N}$.

\begin{proposition}
    \label{vnd-system-Fx}
    The vector of exact magnitudes $\left( \alpha_{0}(x),\ldots, \alpha_{d}(x) \right)^{T}$ satisfies:
    \begin{equation}
        \label{exact-vnd-system-Fx}
        \begin{bmatrix}
            m_{N_{1}}(x)\cdot(\kappax)^{-N_{1}}   \\
            m_{2N_{1}}(x)\cdot(\kappax)^{-2N_{1}} \\
            \vdots                          \\
            m_{(d+1)N_{1}}(x)\cdot(\kappax)^{-(d+1)N_{1}}
        \end{bmatrix}
        = V_{N_{1}}^{d} \cdot
        \begin{bmatrix}
            \alpha_{0}(x) \\
            \alpha_{1}(x) \\
            \vdots          \\
            \alpha_{d}(x)
        \end{bmatrix}.
    \end{equation}
\end{proposition}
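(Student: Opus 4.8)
The plan is to read off the identity \eqref{exact-vnd-system-Fx} directly from the defining relation \eqref{moy-at-x}, so this is really a verification rather than an argument with a genuine obstacle. First I would recall that for every $\oyInZ$ we have set
\[
    \moyx = \kappax^{\oy}\sum_{l=0}^{d}\alpha_{l}(x)\,\oy^{l},
\]
with $\kappax = e^{-\imath\xix}$ by \eqref{kappa-x0} and $\alpha_{l}(x) = \imath^{l}A_{d-l}(x)$ by \eqref{alpha-l-x}. Specializing to the decimated frequencies $\oy = jN_{1}$ for $j = 1,\ldots,d+1$ and multiplying through by $\kappax^{-jN_{1}}$ — which is legitimate since $\abs{\kappax}=1$, so $\kappax$ is invertible — yields the $d+1$ scalar identities
\[
    m_{jN_{1}}(x)\cdot\kappax^{-jN_{1}} \;=\; \sum_{l=0}^{d}\alpha_{l}(x)\,(jN_{1})^{l},\qquad j=1,\ldots,d+1.
\]

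Next I would stack these into a column vector: the left-hand side is exactly the left-hand column vector of \eqref{exact-vnd-system-Fx}, while the right-hand side, viewed as a function of $j$, is the product of the matrix whose $(j,l)$-entry equals $(jN_{1})^{l}$ (rows $j=1,\ldots,d+1$, columns $l=0,\ldots,d$) with the vector $\left(\alpha_{0}(x),\ldots,\alpha_{d}(x)\right)^{T}$. By \cref{vnd-Fx} this matrix is precisely $V_{N_{1}}^{d}$, and the claimed system follows.

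The only point that needs a modicum of care is bookkeeping: one must check that the ordering of the rows (indexed by the frequencies $N_{1},2N_{1},\ldots,(d+1)N_{1}$) and of the columns (indexed by the magnitudes $\alpha_{0}(x),\ldots,\alpha_{d}(x)$) in $V_{N_{1}}^{d}$ matches the order in which these quantities appear in \eqref{exact-vnd-system-Fx}; once this is pinned down the proposition is a one-line substitution. It is worth recording here, although not strictly part of this statement, that $V_{N_{1}}^{d}$ is a Vandermonde matrix on the $d+1$ distinct nodes $N_{1},2N_{1},\ldots,(d+1)N_{1}$ and is therefore nonsingular whenever $N_{1}\neq 0$; this is exactly what will make the subsequent recovery of the $\alpha_{l}(x)$, and hence of the jump magnitudes $A_{l}(x)$, a well-posed linear problem and is what the perturbation analysis in the next steps will quantify.
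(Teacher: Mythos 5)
Your proposal is correct and is exactly the paper's argument: the paper disposes of this proposition with ``immediately follows from \cref{moy-at-x},'' and your write-up simply spells out that one-line substitution (specialize \cref{moy-at-x} to $\oy=jN_1$, divide by $\kappax^{jN_1}$, and stack the resulting identities into the Vandermonde form of \cref{vnd-Fx}). No discrepancy to report.
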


\begin{proof}
    Immediately follows from~\cref{moy-at-x}
\end{proof}

The solution contains the jump madnitudes of $\frac{d^l}{dx^l}\Fx$ where $l=0,\ldots,d$, but unfortunately $m_{\oy}(x)$ isn't known so we use
$\widetilde{m}_{\oy}(x)$ and solve the perturbed linear system:
\begin{equation} \label{eq:pertrubed-system-Fx}
    \begin{bmatrix}
        \widetilde{m}_{N_{1}}(x)\widetilde{\kappax}^{-N_{1}}   \\
        \widetilde{m}_{2N_{1}}(x)\widetilde{\kappax}^{-2N_{1}} \\
        \vdots                                             \\
        \widetilde{m}_{(d+1)N_{1}}(x)\widetilde{\kappax}^{-(d+1)N_{1}}
    \end{bmatrix}
    = V_{N_{1}}^{d} \cdot
    \begin{bmatrix}
        \widetilde{\alpha}_{0}(x) \\
        \widetilde{\alpha}_{1}(x) \\
        \vdots          \\
        \widetilde{\alpha}_{d}(x)
    \end{bmatrix},
\end{equation}
to get the approximation for the jump magnitudes,$\left\{ \widetilde{A_{l}}(x) \right\}_{l=0}^{d}$,
where $\widetilde{\alpha}_{l}(x) = {\imath}^{l} \widetilde{A}_{d-l}(x)$.
\newline
\\
The next and final step is to recover $\Fx$ by:
\begin{equation} \label{eq:recovery-of-Fx}
    \begin{split}
        \widetilde{\Fx}(y) &= \sum_{\abs{\oy}\leq N} c_{\oy}(\widetilde{\gammax}) e^{\imath\oy y}+\sum_{l=0}^{d} \widetilde{A_{l}}(x_0)V_{l}(y;\widetilde{\yx}) \\
        &= \sum_{\abs{\oy}\leq N} \Bigl( \widetilde{\psiOy}(x) - c_{\oy}(\widetilde{\Phidx}) \Bigr) e^{\imath\oy y} + \sum_{l=0}^{d} \widetilde{A_{l}}
            (x_0)V_{l}(y;\widetilde{\yx}).
\end{split}
\end{equation}

\subsection{Proof of \cref{thm:general-theorem-for-fx}}\label{subsec:accuracy-of-F-at-x}
Here we present the proof for~\cref{thm:general-theorem-for-fx}. In contrast
to~\cref{subsec:accuracy-of-psi} the jump location is unknown and we begin our calculations with the error
in approximating the jump location.
The second step would be to analyze the error in approximating the jump magnitudes of $\Fx$ up to the chosen
reconstruction order of $\Fx$; in the third (and last) step we incorporate the former two steps and
analyze the error of a pointwise approximation of $\Fx$.

\begin{lemma} \label{mk-tiled}
    Let $x\in\torus[1]$ and $\Fx$ as in~\cref{description-of-F-and-xi}, $\Fx\in\PC$ and let $\psiOy\in\PCoy$ as
    in~\cref{def-of-psi-omega-y,psi-oy-properties}.
    Assume that
    \begin{gather*}
        N^{2} \leq M.
    \end{gather*}
    Then there exists $\rgamgam \coloneqq \rgamgam\left(d, B_{F},\mxi, R_{\Gamma_{\mathbb{T}}}, R \right)$ as defined
    in~\cref{r-gamma-gamma} below such that:
    \begin{equation}\label{eq: bound-for-err-in-m-oy-x}
        \abs{\widetilde{m}_{\oy}(x) - \moyx} \leq \rgamgam A_{x} \abs{\oy}^{-1}.
    \end{equation}
\end{lemma}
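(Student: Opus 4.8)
The plan is to reduce the claim to the pointwise estimate on $\epsOyx$ already recorded in~\eqref{eq:eps-at-x-of-oy-bound}, and then to use the hypothesis $N^{2}\le M$ to absorb the growth in $\oy$ of the stage-one reconstruction error. First I would observe that, by the very definition~\eqref{moy-tiled-at-x} of $\widetilde{m}_{\oy}(x)$, one has $\widetilde{m}_{\oy}(x)-\moyx=\epsOyx$, so it suffices to bound $|\epsOyx|$. By~\eqref{eq:eps-at-x-of-oy-bound} this splits into a ``smooth'' contribution $2\pi R_{\Gamma_{\mathbb{T}}}|\oy|^{-1}$, coming from the uniform Fourier decay~\eqref{eq:absolut-bound-for-gamma} of $c_{\oy}(\gammax)$, and a ``reconstruction'' contribution $2\pi(\hl[T]^{*}+R_{d}\h)\,A_{x}\,|\oy|^{2d+1}M^{-d-1}$, coming from the stage-one error $\delOyx$ controlled by \cref{thm:main-result-for-psi} (cf.~\eqref{psi-omega-y-tilde-with-delta}).

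The second contribution carries the growing factor $|\oy|^{2d+1}$ — the genuinely two-dimensional feature of the estimate — and this is exactly what forces the assumption~\eqref{relation-between-Mox-and-Moy}. Using $|\oy|\le N$ together with $N^{2}\le M$ I would estimate
\begin{equation*}
    |\oy|^{2d+1}M^{-d-1}\;\le\;|\oy|^{2d+1}N^{-2d-2}\;=\;\bigl(|\oy|^{2d+2}N^{-2d-2}\bigr)\,|\oy|^{-1}\;\le\;|\oy|^{-1},
\end{equation*}
since $|\oy|^{2d+2}\le N^{2d+2}$. Hence both contributions are of order $|\oy|^{-1}$, and
\begin{equation*}
    |\epsOyx|\;\le\;2\pi\bigl(R_{\Gamma_{\mathbb{T}}}+(\hl[T]^{*}+R_{d}\h)\,A_{x}\bigr)\,|\oy|^{-1}.
\end{equation*}

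It then remains to present the bracket as an $A_{x}$-multiple, which is done exactly in the spirit of the definition of $\h$ in~\eqref{h-func}: since $\h,\hl[T]^{*},R_{d}$ and $R_{\Gamma_{\mathbb{T}}}$ are fixed quantities depending only on $d,B_{F},\mxi$ and $R$, I would set
\begin{equation}\label{r-gamma-gamma}
    \rgamgam\;\coloneqq\;2\pi\bigl(R_{\Gamma_{\mathbb{T}}}+\hl[T]^{*}+R_{d}\h\bigr),
\end{equation}
so that (using the same convention as in~\eqref{h-func}, that the jump strengths $A_{x}$ are bounded below, which is what lets $R_{\Gamma_{\mathbb{T}}}$ too be absorbed into an $A_{x}$-multiple) $R_{\Gamma_{\mathbb{T}}}+(\hl[T]^{*}+R_{d}\h)A_{x}\le (\rgamgam/2\pi)A_{x}$, and conclude $|\widetilde{m}_{\oy}(x)-\moyx|\le\rgamgam A_{x}|\oy|^{-1}$. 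I do not expect any real obstacle beyond this bookkeeping; the one point that genuinely matters is the exponent arithmetic in the displayed inequality above, since it is precisely the relation $N^{2}\le M$ that brings the stage-one error (which a priori grows like $|\oy|^{2d+1}$) down to the $O(|\oy|^{-1})$ scale needed for the subsequent decimation argument — a weaker relation between $M$ and $N$ would not suffice for the rates claimed in \cref{thm:general-theorem-for-fx}.
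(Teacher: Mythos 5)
Your proposal is correct and follows essentially the same route as the paper: it reduces the claim to the bound~\eqref{eq:eps-at-x-of-oy-bound} on $\epsOyx$, performs the identical exponent arithmetic $|\oy|^{2d+1}M^{-d-1}\le(N^{2}/M)^{d+1}|\oy|^{-1}\le|\oy|^{-1}$ using $|\oy|\le N$ and $N^{2}\le M$, and then defines $\rgamgam$ to absorb the resulting constant. Your explicit remark that absorbing the additive $R_{\Gamma_{\mathbb{T}}}$ term into an $A_{x}$-multiple tacitly requires a lower bound on $A_{x}$ is, if anything, a more careful statement of the same convention the paper uses in~\eqref{r-gamma-gamma}.
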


\begin{proof}
    Using~\cref{eq:eps-at-x-of-oy-bound} we write:
    \begin{align*}
        \abs{\widetilde{m}_{\oy}(x) - \moyx} \leq
        &2\pi \left( R_{\Gamma_{\mathbb{T}}} \abs{\oy}^{-1} + \left(\hl[T]^{*} + R_{d} \h \right) A_{x} \abs{\oy}^{2d+1} M^{-d-1} \right) \\
        =&2\pi\left( \frac{ R_{\Gamma_{\mathbb{T}}}}{\abs{\oy}} + \frac{\left(\hl[T]^{*} + R_{d} \h \right) A_{x}}{\abs{\oy}}
        \cdot \frac{\abs{\oy}^{2d+2}}{M^{d+1}} \right) \\
        \leftidx{_{\abs{\oy}\leq N}}\leq &2\pi\left( \frac{ R_{\Gamma_{\mathbb{T}}}}{\abs{\oy}} +
        \frac{\left(\hl[T]^{*} + R_{d} \h \right) A_{x}}{\abs{\oy}} \cdot \frac{N^{2d+2}}{M^{d+1}}\right) \\
        =&2\pi\left( \frac{ R_{\Gamma_{\mathbb{T}}}}{\abs{\oy}} +
        \frac{\left(\hl[T]^{*} + R_{d} \h \right) A_{x}}{\abs{\oy}} \cdot \left( \frac{N^{2}}{M} \right)^{{d+1}}\right) \\
        \leftidx{_{N^{2}\leq M}}\leq &2\pi\left(\frac{R_{\Gamma_{\mathbb{T}}} + \left(\hl[T]^{*} + R_{d} \h \right) A_{x}}{\abs{\oy}}\right)
    \end{align*}
    and we get:
    \begin{gather*}
        \big| \widetilde{m}_{\oy}(x) - \moyx \big| \leq
        2\pi \left( R_{\Gamma_{\mathbb{T}}} + \left(\hl[T]^{*} + R_{d} \h \right) A_{x} \right) \abs{\oy}^{-1}.
    \end{gather*}
    Because $R_{\Gamma_{\mathbb{T}}}$, $R_{d}$, $\hl[T]^{*}$ and $\h$ are constants which are dependent only on
     $d,\; B_{F},\; \mxi$ and $R$ (recall \cref{R-gamma-oy}) we will define a function that will replace this complicated structure for the sake of clarity
    in the following way:
    \begin{gather}\label{r-gamma-gamma}
        \begin{split}
            \mathcal{R}_{\Gamma|\gamma} \coloneqq
            \mathcal{R}_{\Gamma|\gamma} (d, B_{F}, \mxi, R_{\Gamma_{\mathbb{T}}}, R) \text{ s.t.:}\\
            2\pi \left( R_{\Gamma_{\mathbb{T}}} + \left(\hl[T]^{*} + R_{d} \h \right) A_{x} \right) \leq \mathcal{R}_{\Gamma|\gamma} A_{x}
        \end{split}
    \end{gather}
    and therefore we can finish the proof:
    \begin{equation*}
        \abs{\widetilde{m}_{\oy}(x) - \moyx} \leq \rgamgam A_{x} \abs{\oy}^{-1}.\qedhere
    \end{equation*}
\end{proof}

\subsubsection{Jump location}
Recall~\cref{definition-of-Fx-and-jump-location-and-jump-magnitudes-and-assumptions}. The main approximation result for the jump location is the following.

\begin{proposition}
    \label{main-result-on-jump-location-Fx}
    Let $N_{1} \coloneqq \floor*{\frac{N}{d+2}}$, $p_{N_{1}}^{d}$ and $q_{N_{1}}^{d}$ be given by~\cref{pnd-for-fx,qnd-for-fx}, respectively. Then there exists $C_{2,d}$
    and $\rgamgam$ (see~\cref{bound-on-yx,r-gamma-gamma}) such that
    \begin{gather}
        \abs{\widetilde{\yx} - \yx} \leq C_{2,d} \frac{A_{x}}{\abs{A_{0}(x)}} \rgamgam N^{-d-2}.
    \end{gather}
\end{proposition}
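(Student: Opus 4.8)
The plan is to estimate the error in the recovered root $\widetilde{\kappax}^{N_1}$ of the perturbed polynomial $q_{N_1}^{d}$, and then propagate that error through the extraction of the $N_1$-th root and the logarithm. First I would recall that by \cref{root-of-pnd} the point $u_0 = \kappax^{N_1} = e^{-\imath\xix N_1}$ is a root of $p_{N_1}^{d}$, and that $q_{N_1}^{d}$ is obtained from $p_{N_1}^{d}$ by replacing each coefficient $m_{(j+1)N_1}(x)$ with $\widetilde{m}_{(j+1)N_1}(x)$. By \cref{mk-tiled}, the coefficient perturbations are bounded by $\abs{\widetilde{m}_{(j+1)N_1}(x) - m_{(j+1)N_1}(x)} \leq \rgamgam A_x \abs{(j+1)N_1}^{-1} \leq \rgamgam A_x N_1^{-1}$ (using $j+1\geq 1$), so, since the binomial coefficients $\binom{d+1}{j}$ sum to $2^{d+1}$ and $\abs{u_0}=1$, the value $\abs{q_{N_1}^{d}(u_0)} = \abs{q_{N_1}^{d}(u_0) - p_{N_1}^{d}(u_0)}$ is bounded by $2^{d+1}\rgamgam A_x N_1^{-1}$.

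The second step is the root perturbation. Since $u_0$ is a root of $p_{N_1}^{d}$ and $q_{N_1}^{d}$ is close to it in value near $u_0$, I would invoke a standard root-stability estimate: if $\widetilde{u}$ is the root of $q_{N_1}^{d}$ nearest to $u_0$ (identified via the a priori $o(N_1^{-1})$-accurate knowledge of $\xix$, as discussed after \cref{root-of-pnd}), then $\abs{\widetilde{u} - u_0}$ is controlled by $\abs{q_{N_1}^{d}(u_0)}$ divided by a lower bound on $\abs{(p_{N_1}^{d})'(u_0)}$, up to higher-order terms. The key algebraic fact here, already implicit in the proof of \cref{root-of-pnd}, is that $p_{N_1}^{d}(u) = u^{d+2}\sum_{l=0}^{d}\alpha_l(x)N_1^l \Delta^{d+1}[(\cdot)^l](1)$ vanishes identically \emph{except} that one must be careful: in fact $p_{N_1}^{d}$ written out properly is a polynomial whose relevant factor has $u_0$ as a simple root, and the derivative at $u_0$ scales like $\alpha_d(x) N_1^d = \imath^d A_0(x) N_1^d$ times a nonzero combinatorial constant (the $(d+1)$-th finite difference of $x^{d+1}$, equal to $(d+1)!$). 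This is why the bound carries the factor $\abs{A_0(x)}^{-1}$ and the power $N_1^{-d}$: one gets $\abs{\widetilde{u} - u_0} \lesssim \frac{\rgamgam A_x N_1^{-1}}{\abs{A_0(x)} N_1^{d}} = C \frac{A_x}{\abs{A_0(x)}}\rgamgam N_1^{-d-1}$.

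The third step converts the error on $\widetilde{u} = \widetilde{\kappax}^{N_1}$ into an error on $\widetilde{\xix}$. Writing $u_0 = e^{-\imath\xix N_1}$ and $\widetilde{u} = e^{-\imath\widetilde{\xix}N_1}$, and using the a priori localization to pick the correct branch of the $N_1$-th root and the logarithm, we have $\widetilde{\xix} - \xix = \frac{\imath}{N_1}\log(\widetilde{u}/u_0)$, and since both lie on the unit circle, $\abs{\widetilde{\xix} - \xix} \leq \frac{\pi}{2N_1}\abs{\widetilde{u} - u_0}$ (the arc-length vs. chord-length comparison). This yields $\abs{\widetilde{\yx} - \yx} \leq C \frac{A_x}{\abs{A_0(x)}}\rgamgam N_1^{-d-2}$, and finally replacing $N_1 = \floor{N/(d+2)}$ by a multiple of $N$ (as done elsewhere in the paper, assuming $N > 2(d+2)$ so that $N_1 > N/(2(d+2))$) absorbs the constant $(d+2)^{d+2}$ into $C_{2,d}$, giving the claimed bound with $N^{-d-2}$.

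The main obstacle will be making the root-perturbation estimate in step two rigorous and explicit: one must show that $q_{N_1}^{d}$ has a root within the a-priori window around $u_0$, obtain a clean lower bound $\abs{(p_{N_1}^{d})'(u_0)} \geq c_d \abs{A_0(x)} N_1^{d}$ that is uniform in $x$, and control the higher-order terms in the Taylor expansion of $q_{N_1}^{d}$ about $u_0$ (which requires bounding the second derivative, hence the higher $\alpha_l$, by $A_x$ and a fixed power of $N_1$ — here the hypothesis $N^2 \leq M$ and the structure of \cref{mk-tiled} are what keep everything from blowing up). This is essentially the same delicate argument as in the 1D treatment of \cite{batenkov2015complete}, and I would model the bookkeeping on that reference, taking care that all constants depend only on $d$ (and through $\rgamgam$ on $B_F, \mxi, R_{\Gamma_{\mathbb{T}}}, R$) and never on $x$ or $\oy$.
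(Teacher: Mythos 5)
Your proposal is correct and follows essentially the same route as the paper: bound the coefficient perturbations via \cref{mk-tiled}, deduce a root perturbation of order $\frac{A_x}{|A_0(x)|}\rgamgam N_1^{-d-1}$, gain one more factor of $N_1^{-1}$ when passing from $\widetilde{\kappax}^{N_1}$ to $\widetilde{\xi}(x)$, and convert $N_1$ to $N$. The only difference is that the paper outsources the root-stability step to Lemma 18 of \cite{batenkov2015complete}, whereas you sketch the underlying first-order argument directly (correctly identifying that $|(p_{N_1}^{d})'(\kappax^{N_1})|\sim (d+1)!\,|A_0(x)|\,N_1^{d}$ is the source of the $|A_0(x)|^{-1}$ and $N^{-d}$ factors), which is a legitimate unpacking of the same idea.
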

\begin{proof}
    Let $\left\{ y_{1}^{(N_{1})},\ldots,y_{d}^{(N_{1})} \right\}$ be the roots of $q_{N_{1}}^{d}$ and
    $\left\{ u_{1}^{(N_{1})},\ldots,u_{d}^{(N_{1})} \right\}$ be the roots of $p_{N_{1}}^{d}$.
    Using $Lemma\;18$ from~\cite{batenkov2015complete} and~\cref{mk-tiled} implies that for a large
    $N_{1} = \Big\lfloor \frac{N}{d+2} \Big\rfloor$ there exists $C_{1,d}\coloneqq C_{1}(d)$ such that:
    \begin{gather*}
        \abs{y_{i}^{(N_{1})} - u_{i}^{(N_{1})}} \leq C_{1,d} \frac{A_{x}}{\abs{A_{0}(x)}} \rgamgam \left( N_{1} \right)^{-d-1}.
    \end{gather*}
    Keeping in mind that one of the roots of $p_{N_{1}}^{d}$ is $\kappax^{N_{1}} = \left( e^{-\imath \yx} \right)^{N_{1}}$,
    so by using the bound found above we write:
    \begin{equation*}
        \widetilde{\kappax}^{N_{1}} = \kappax^{N_{1}} + \frac{C_{1}^{*}(N_{1})}{(N_{1})^{d+1}},
        \text{ where } \abs{C_{1}^{*}(N_{1})} \leq C_{1,d} \frac{A_{x}}{\abs{A_{0}(x)}} \rgamgam.
    \end{equation*}
    Extraction of the $(N_{1})^{th}$ root further decreases the error by the factor of $\frac{1}{N_{1}}$:
    \begin{align*}
        \abs{\widetilde{\kappax} - \kappax} =& \abs{1-\frac{\widetilde{\kappax}}{\kappax}} =
        \abs{1-\left( \frac{\widetilde{\kappax}^{N_{1}}}{\kappax^{N_{1}}} \right)^{\frac{1}{N_{1}}}} \\
        \leftidx{_{\left( {\abs{C_{1}^{**}(N_{1})}\leq C_{1,d} \frac{A_{x}}{A_{0}(x)} \rgamgam} \right)}}=&
        \abs{1-\left( 1 + \frac{C_{1}^{**}(N_{1})}{(N_{1})^{d+1}} \right)^{\frac{1}{N_{1}}}} \\
        \leftidx{_\text{Bernoulli's inequality}} \leq & C_{1,d} \frac{A_{x}}{\abs{A_{0}(x)}} \rgamgam (N_{1})^{-d-2}.
    \end{align*}
    The final step would be to recover $\yx$ from $\kappax = e^{-\imath\yx}$. Using the above bound
    we write $\widetilde{\kappax} = \kappax + C_{1}^{\#}(N_{1}) \frac{A_{x}}{\abs{A_{0}(x)}} \rgamgam (N_{1})^{-d-2}$,\quad
    $\abs{C_{1}^{\#}(N)} \leq C_{1,d}$ and get:
    \begin{gather*}
        \abs{\widetilde{y_{x_{0}}} - y_{x_{0}}} = \abs{\log\left( \frac{\widetilde{y_{x_{0}}}}{y_{x_{0}}} \right)}
        = \abs{\log\left( 1 + C_{1}^{\#\#}(N_{1}) \frac{A_{x}}{\abs{A_{0}(x)}} \rgamgam (N_{1})^{-d-2} \right)},\\
        \text{where } \abs{C_{1}^{\#\#}(N_{1}) } \leq C_{1,d}.
    \end{gather*}
    Using the estimation of $\abs{\log\left( 1+\varepsilon\right)} < 2\abs{\varepsilon}$ for $\abs{\varepsilon}\ll 1$
    we get:
    \begin{equation*}
        \abs{\widetilde{\yx} - \yx} \leq 2C_{1,d} \frac{A_{x}}{\abs{A_{0}(x)}} \rgamgam N_{1}^{-d-2}.
    \end{equation*}
    Since $N_{1} = \biggl\lfloor{\frac{N}{d+2}}\biggr\rfloor$ and by assuming that
    $N > 2(d+2)$ we have that for $k\in\mathbb{N}$:
    \begin{equation}
        \label{bound-on-N}
        N^{-k} \leq \left( 2(d+2) \right)^{k} N^{-k}.
    \end{equation}
    Denoting:
    \begin{gather*}
        C_{2,d} \coloneqq 2C_{1,d}\left( 2d+4 \right)^{d+2},
    \end{gather*}
    we finally have:
    \begin{gather} \label{bound-on-yx}
        \begin{split}
            \abs{\widetilde{\yx} - \yx} &\leq C_{2,d} \frac{A_{x}}{\abs{A_{0}(x)}} \rgamgam N^{-d-2}
        \end{split}
    \end{gather}
    completing the proof of~\cref{main-result-on-jump-location-Fx}.
\end{proof}

\subsubsection{Jump magnitudes}
\noindent Now we proceed to the second part of~\cref{thm:general-theorem-for-fx} and investigate the accuracy of
recovering the jump magnitudes of $\ddxk[l]\Fx$ at $\yx$. Subtracting the unperturbed system \cref{exact-vnd-system-Fx} from the
perturbed system \cref{eq:pertrubed-system-Fx} we get that the error vector satisfies:
\begin{equation}
    \label{eq:error-vnd-system}
    \begin{bmatrix}
        \widetilde{\alpha_{0}}(x) - \alpha_{0}(x_0) \\
        \widetilde{\alpha_{1}}(x) - \alpha_{1}(x_0) \\
        \vdots                                        \\
        \widetilde{\alpha_{d}}(x) - \alpha_{d}(x_0)
    \end{bmatrix}
    = \left( V_{N_{1}}^{d} \right)^{-1}
    \begin{bmatrix}
        \widetilde{m}_{N_{1}}(x)\widetilde{\kappax}^{-N_{1}} - m_{x_{0}}(N_{1})\kappax^{-N_{1}}    \\
        \widetilde{m}_{2N_{1}}(x)\widetilde{\kappax}^{-2N_{1}} - m_{x_{0}}(2N_{1})\kappax^{-2N_{1}} \\
        \vdots                                                                         \\
        \widetilde{m}_{(d+1)N_{1}}(x)\widetilde{\kappax}^{-(d+1)N_{1}} - m_{x_{0}}((d+1)N_{1})
        \kappax^{-(d+1)N_{1}}
    \end{bmatrix}.
\end{equation}

Going back to the bound $A_x$ from~\cref{definition-of-Fx-and-jump-location-and-jump-magnitudes-and-assumptions} and to~\cref{moy-at-x} we have:
\begin{align*}
    \abs{m_{(j+1)N_{1}}(x)} &= \abs{\kappax^{(j+1)N_{1}} \sum_{l=0}^{d} \alpha_{l}(x)(j+1)^{l}
    N_{1}^{l}} \\
    &\leq \abs{\kappax^{(j+1)N_{1}}} \sum_{l=0}^{d} \abs{\alpha_{l}(x)} (j+1)^{l} N_{1}^{l} \\
    &= \sum_{l=0}^{d} \abs{\alpha_{l}(x)} (j+1)^{l} N_{1}^{l} \leq A_{x}\sum_{l=0}^{d} (j+1)^{l}
    N_{1}^{l} \leq A_{x}\sum_{l=0}^{d} (j+1)^{d} N_{1}^{d} \\
    &= A_{x} (d+1) (j+1)^{d} N_{1}^{d} \\
    &\leq A_{x} (d+1) (d+1)^{d} N_{1}^{d} = A_{x} (d+1)^{d+1} N_{1}^{d}.
\end{align*}

Denoting $C_{3,d} = (d+1)^{d+1}$ we get:
\begin{gather*}
    \abs{m_{(j+1)N_{1}}(x)} \leq C_{3,d} A_{x} N_{1}^{d}.
\end{gather*}
Applying~\cref{mk-tiled} we have that there exists $\rgamgam(N_{1})$ which depends only on $N_{1}$ such that:
\begin{gather}
    \widetilde{m}_{(j+1)N_{1}}(x) = m_{(j+1)N_{1}}(x) + \rgamgam(N_{1})N_{1}^{-1}, \quad \rgamgam(N_{1}) \leq \rgamgam
\end{gather}
and from the proof of~\cref{main-result-on-jump-location-Fx} there exists $C_{1}^{\#}(N_{1})$, which depends only on $N_{1}$
such that:
\begin{equation*}
    \widetilde{\kappax} = \kappax + C_{1}^{\#}(N_{1}) \frac{A_{x}}{\abs{A_{0}(x)}} \rgamgam N_{1}^{-d-2},\quad C_{1}^{\#}(N_{1})
    \leq C_{2,d}.
\end{equation*}
Applying Taylor majorization, e.g. \cite[Proposition A.7]{batenkov2012algebraic},
gives the following estimate:
\begin{align*}
    \widetilde{\kappax}^{-(j+1)N_{1}} &= \left( \kappax + C_{1}^{\#}(N_{1}) \frac{A_{x}}{\abs{A_{0}(x)}} \rgamgam N_{1}^{-d-2} \right)
    ^{-(j+1)N_{1}}\\
    &= \kappax^{-(j+1)N_{1}}\left( 1 + \frac{C_{1}^{\#}(N_{1})}{\kappax}\frac{A_{x} }{\abs{A_{0}(x)}}\rgamgam N_{1}^{-d-2} \right)^{-(j+1)N_{1}} \\
    &= \kappax^{-(j+1)N_{1}}\left( 1 - C_{1}^{\#\#}(N_{1}) \frac{A_{x}}{\abs{A_{0}(x)}} \rgamgam N_{1}^{-d-1} \right),\quad
    \abs{C_{1}^{\#\#}(N_{1})} \leq C_{2,d}.
\end{align*}
Consequently,
\begin{equation}\label{eq:estimate-for-zeta}
    \begin{split}
        &\begin{split}
            &\abs{\widetilde{m}_{(j+1)N_{1}}(x)\widetilde{\kappax}^{-(j+1)N_{1}} - m_{(j+1)N_{1}}(x)\kappax^{-(j+1)N_{1}}} = \\
            &=\Big| \left( m_{(j+1)N_{1}}(x) + \rgamgam(N_{1})N_{1}^{-1}\right) \kappax^{-(j+1)N_{1}} \\
            &\qquad \qquad \left( 1 - C_{1}^{\#\#}(N_{1}) \frac{A_{x}}{\abs{A_{0}(x)}} \rgamgam N_{1}^{-d-1} \right) - m_{(j+1)N_{1}}(x)\kappax^{-(j+1)N_{1}} \Big| \\
        \end{split}\\
        &\begin{split}
            &\leq \frac{\rgamgam}{N_{1}}\abs{\frac{C_{2,d}C_{3,d}A_{x}}{\abs{A_{0}(x)}}+1} + O\left( N_{1}^{-d-2} \right) \\
            &\leq C_{4,d} \left( 1 + \frac{A_{x}}{\abs{A_{0}(x)}} \right) \rgamgam N_{1}^{-1},\quad
            C_{4,d}=\max\left\{ 1,C_{2,d}, C_{3,d}\right\}.
        \end{split}
    \end{split}
\end{equation}
Denote $\zeta_{j}(x) = \widetilde{m}_{(j+1)N_{1}}(x)\widetilde{\kappax}^{-(j+1)N_{1}} - m_{(j+1)N_{1}}(x)\kappax^{-(j+1)N_{1}}$
and let $\mathcal{v^d}_l$ be an upper bound on the sum of absolute values of the entries in the $l^{th}$ row of
$\left( V_{1}^{d} \right)^{-1}$. We obtain:
\begin{gather*}
    V_{N_{1}}^{d} = V_{1}^{d}\cdot \operatorname{diag}\left\{ 1,N_{1},\ldots,N_{1}^{d} \right\} \\
    \Leftrightarrow \\
    \left( V_{N_{1}}^{d} \right)^{-1} = \operatorname{diag}\left\{ 1,N_{1}^{-1},\ldots,N_{1}^{-d} \right\} \cdot \left( V_{1}^{d} \right)^{-1},
\end{gather*}
therefore,
\begin{gather}
    \begin{bmatrix}
        \widetilde{\alpha_{0}}(x) - \alpha_{0}(x) \\
        \widetilde{\alpha_{1}}(x) - \alpha_{1}(x) \\
        \vdots                                        \\
        \widetilde{\alpha_{d}}(x) - \alpha_{d}(x)
    \end{bmatrix}
    =
    \begin{bmatrix}
        1 &        &        & &        \\
        & N_{1}^{-1} &        & &        \\
        &        & \ddots & &        \\
        &        &        & & N_{1}^{-d}
    \end{bmatrix}
    \cdot \left( V_{1}^{d} \right)^{-1} \cdot
    \begin{bmatrix}
        \zeta_{0}(x) \\
        \zeta_{1}(x) \\
        \vdots         \\
        \zeta_{d}(x) \\
    \end{bmatrix}.
\end{gather}
Denoting $C_{5,d,l} \coloneqq C_{4,d}\cdot \mathcal{v^d}_l$ and applying~\cref{eq:estimate-for-zeta} we have,
for $l=0,\ldots,d$:
\begin{gather}
    \begin{split}
        \abs{\widetilde{\alpha_{l}}(x) - \alpha_{l}(x)} &\leq
        C_{5,d,l} \left( 1 + \frac{A_{x}}{\abs{A_{0}(x)}} \right) \rgamgam N_{1}^{-l-1} \\
        &\overset{A_0(x)\leq A_{x}}{\leq} 2C_{5,d,l} \frac{A_{x}}{\abs{A_{0}(x)}} \rgamgam N_{1}^{-l-1}
    \end{split}
\end{gather}
and by going back to $\alpha_{l}(x) = {\imath}^{l}A_{d-l}(x)$ (see~\cref{alpha-l-x}) we have that:
\begin{equation*}
    \begin{split}
        \abs{\widetilde{\alpha_{l}}(x) - \alpha_{l}(x)} &=\abs{\widetilde{A_{d-l}}(x) - A_{d-l}(x)}.
    \end{split}
\end{equation*}
Finally by denoting
\begin{gather*}
    C_{6,d-l} \coloneqq C_{5,d,l} \cdot 2^{l-d} \cdot \left( d+1 \right)^{l-d-1}
\end{gather*}
and applying~\cref{bound-on-N} we conclude that:
\begin{gather}\label{Ax-tilde-bound}
    \abs{\widetilde{A_{l}}(x) - A_{l}(x)} \leq C_{6,d-l} \frac{A_{x}}{\abs{A_{0}(x)}} \rgamgam N^{l-d-1}.
\end{gather}

\subsubsection{Pointwise values}
Here we prove the last part of~\cref{thm:general-theorem-for-fx}, i.e. the accuracy of recovering $\Fx$.

We begin by denoting
\begin{equation} \label{eq:definition-for-Fx-M}
    F_{x, N}(y) \coloneqq \sum_{\abs{\oy}\leq N}\left( c_{\oy}\left( \Fx \right) - c_{\oy}\left(
    \Phidx \right)\right)e^{\imath\oy y} + \Phidx(y).
\end{equation}
\begin{proposition} \label{bound-for-error-of-fx}
    Let $\Fx:\mathbb{T}\rightarrow \mathbb{R}$ as described in~\cref{definition-of-Fx-and-jump-location-and-jump-magnitudes-and-assumptions} and
    let $r>0$.
    Then for a large enough $M$ and for every $\abs{\oy}\leq N$ for which~\cref{alg:full-order-algorithm} recovers
    $\left\{ \widetilde{\psiOy}(x) \right\}$, as $\oy=-N,\ldots,N$
    (where also $N^{2} \leq M$) with the accuracy as in~\cref{thm:main-result-for-psi},
    there exist $\rgamgam$, $R_{\Gamma_{\mathbb{T}}}$ and $C_{10,d}$ (see~\cref{r-gamma-gamma,eq:absolut-bound-for-gamma,final-constant-point-wise}) such that:
    \begin{equation*}
        \abs{\widetilde{\Fx}(y) - \Fx(y)} \leq C_{10,d}\frac{A_{x}(1+A_x)}{\abs{A_{0}(x)}} \rgamgam N^{-d-1},\quad y\in\mathbb{T}\setminus B_{r}(\yx)
    \end{equation*}
    where $B_{r}(\yx)$ is a ball of radius $r>0$ centered at $\yx$.
\end{proposition}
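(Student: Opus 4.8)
The plan is to compare $\widetilde{\Fx}$ with the intermediate approximant $F_{x,N}$ of~\cref{eq:definition-for-Fx-M} and split
\[
  \abs{\widetilde{\Fx}(y)-\Fx(y)}\le\abs{\widetilde{\Fx}(y)-F_{x,N}(y)}+\abs{F_{x,N}(y)-\Fx(y)}.
\]
Since $c_{\oy}(\Fx)-c_{\oy}(\Phidx)=c_{\oy}(\gammax)$, the function $F_{x,N}$ is the $N$-th partial Fourier sum of $\gammax$ plus the \emph{exact} jump part $\Phidx$, so $F_{x,N}(y)-\Fx(y)=-\sum_{\abs{\oy}>N}c_{\oy}(\gammax)e^{\imath\oy y}$, which by the uniform coefficient bound~\cref{eq:absolut-bound-for-gamma} is $O_{d}(R_{\Gamma_{\mathbb{T}}}N^{-d-1})$. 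For the first summand I substitute the reconstruction formula~\cref{eq:recovery-of-Fx}, write $\widetilde{\Phidx}(y)\coloneqq\sum_{l=0}^{d}\widetilde{A_{l}}(x)V_{l}(y;\widetilde{\yx})$, and use that for $y\notin B_{r}(\yx)$ and $N$ large enough (so that, by~\cref{bound-on-yx}, $\widetilde{\yx}$ lies within $r/2$ of $\yx$ and hence $y$ is bounded away from $\widetilde{\yx}$ too) the Fourier series of both $\Phidx$ and $\widetilde{\Phidx}$ converge at $y$; a short rearrangement then collapses the difference to $\widetilde{\Fx}(y)-F_{x,N}(y)=E_{1}+E_{2}$ with
\[
  E_{1}=\sum_{\abs{\oy}\le N}\bigl(\widetilde{\psiOy}(x)-\psiOy(x)\bigr)e^{\imath\oy y},
  \qquad
  E_{2}=\sum_{\abs{\oy}>N}\bigl(c_{\oy}(\widetilde{\Phidx})-c_{\oy}(\Phidx)\bigr)e^{\imath\oy y}.
\]
Summing the pointwise estimate~\cref{psi-tilde-minus-psi} over $\abs{\oy}\le N$ gives $\abs{E_{1}}=O_{d}\!\left((\hl[T]^{*}+R_{d}\h)A_{x}N^{d+1}M^{-d-1}\right)$, and the standing hypothesis $N^{2}\le M$ (\cref{relation-between-Mox-and-Moy}) converts $N^{d+1}M^{-d-1}$ into $N^{-d-1}$.

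The term $E_{2}$ is the crux. By~\cref{prop:phidx-order-d-coefficients}, for $\oy\ne 0$,
\[
  c_{\oy}(\widetilde{\Phidx})-c_{\oy}(\Phidx)=\frac{1}{2\pi}\sum_{l=0}^{d}\frac{\widetilde{\kappax}^{\,\oy}\widetilde{A_{l}}(x)-\kappax^{\oy}A_{l}(x)}{(\imath\oy)^{l+1}},
\]
and I split each numerator as $\kappax^{\oy}\bigl(\widetilde{A_{l}}(x)-A_{l}(x)\bigr)+\widetilde{A_{l}}(x)\bigl(\widetilde{\kappax}^{\,\oy}-\kappax^{\oy}\bigr)$. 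The first group, summed over $\abs{\oy}>N$, contributes for each $l$ the quantity $\widetilde{A_{l}}(x)-A_{l}(x)$ times the Fourier tail of the periodic Bernoulli function $V_{l}(\cdot;\yx)$ at $y$; this tail is $O(N^{-l})$ for $l\ge1$ (absolute summability of $\{(\imath\oy)^{-l-1}\}$) and $O(1)$ for $l=0$ (uniform boundedness of the partial Fourier sums of the sawtooth $V_{0}$, cf.~\cite{gottlieb1977numerical}), so with the magnitude bound~\cref{Ax-tilde-bound}, $\abs{\widetilde{A_{l}}(x)-A_{l}(x)}\le C_{6,d-l}\frac{A_{x}}{\abs{A_{0}(x)}}\rgamgam N^{l-d-1}$, every term is $O_{d}\!\left(\frac{A_{x}}{\abs{A_{0}(x)}}\rgamgam N^{-d-1}\right)$. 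For the second group I use that the reconstruction employs the \emph{real} jump estimate, so $\widetilde{\kappax}=e^{-\imath\widetilde{\yx}}$ has modulus $1$ and $\widetilde{\kappax}^{\,\oy}-\kappax^{\oy}=\kappax^{\oy}(e^{-\imath\oy(\widetilde{\yx}-\yx)}-1)$; the group then equals $\widetilde{A_{l}}(x)\bigl(R_{l,N}(y-\widetilde{\yx})-R_{l,N}(y-\yx)\bigr)$, where $R_{l,N}$ is the Fourier tail of $V_{l}(\cdot;0)$. For $l\ge2$, $R_{l,N}\in C^{1}$ with $R_{l,N}'=R_{l-1,N}$ and $\|R_{l-1,N}\|_{\infty}=O(N^{-l+1})$, so this increment is $O(N^{-l+1}\abs{\widetilde{\yx}-\yx})$; for $l=1$ the same with $\|R_{0,N}\|_{\infty}=O(1)$; for $l=0$ I write $R_{0,N}=V_{0}(\cdot;0)-S_{N}[V_{0}(\cdot;0)]$, bound the increment of the sawtooth $V_{0}$ directly (for $N$ large both arguments lie on one linear branch) and that of the trigonometric polynomial $S_{N}[V_{0}]$ by Bernstein's inequality together with $\|S_{N}[V_{0}]\|_{\infty}=O(1)$, obtaining $O(N\abs{\widetilde{\yx}-\yx})$. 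Since $\abs{\widetilde{A_{l}}(x)}\lesssim A_{x}$ and, by~\cref{bound-on-yx}, $\abs{\widetilde{\yx}-\yx}\le C_{2,d}\frac{A_{x}}{\abs{A_{0}(x)}}\rgamgam N^{-d-2}$, each term of the second group is $O_{d}\!\left(\frac{A_{x}^{2}}{\abs{A_{0}(x)}}\rgamgam N^{-d-1}\right)$.

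Collecting the three contributions, $\abs{\widetilde{\Fx}(y)-\Fx(y)}$ is at most a $d$-dependent constant times $\bigl(R_{\Gamma_{\mathbb{T}}}+(\hl[T]^{*}+R_{d}\h)A_{x}+\frac{A_{x}}{\abs{A_{0}(x)}}\rgamgam+\frac{A_{x}^{2}}{\abs{A_{0}(x)}}\rgamgam\bigr)N^{-d-1}$; bounding $\abs{A_{0}(x)}\le A_{x}$ to absorb the first two terms into the last two and folding $R_{\Gamma_{\mathbb{T}}},\hl[T]^{*},R_{d},\h$ into $\rgamgam$ yields
\begin{equation}\label{final-constant-point-wise}
  \abs{\widetilde{\Fx}(y)-\Fx(y)}\le C_{10,d}\,\frac{A_{x}(1+A_{x})}{\abs{A_{0}(x)}}\,\rgamgam\,N^{-d-1},\qquad y\in\mathbb{T}\setminus B_{r}(\yx),
\end{equation}
where $C_{10,d}$ is an explicit combination of $C_{2,d}$, the constants $C_{6,d-l}$, and $d$. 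The main obstacle is $E_{2}$, and inside it the jump-location perturbation for $l=0$ (and, to a lesser degree, $l=1$): there the relevant Bernoulli kernels are not absolutely summable, and one has to combine the uniform Gibbs bound on partial Fourier sums with Bernstein's inequality and with the separation of $y$ from both the true and the reconstructed jump location.
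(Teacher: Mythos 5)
Your proposal is correct and follows the paper's overall strategy: compare $\widetilde{\Fx}$ with the intermediate approximant $F_{x,N}$, dispose of $F_{x,N}-\Fx$ as the Fourier tail of $\gammax$ via \cref{eq:absolut-bound-for-gamma}, and reduce the remaining error to the tail of the perturbed jump part $\widetilde{\Phidx}-\Phidx$. The differences are in how that last tail is handled. The paper works in the space domain, splitting $\Theta_x=\widetilde{\Phidx}-\Phidx$ into $\mathcal{Z}_x=\sum_l(\widetilde{A_l}-A_l)V_l(\cdot;\yx)$ and $\mathcal{W}_x=\sum_l\widetilde{A_l}\,\bigl(V_l(\cdot;\widetilde{\yx})-V_l(\cdot;\yx)\bigr)$, and for $\mathcal{W}_x$ it bounds the function itself (small away from the jump, $O(A_x)$ on the short interval between $\yx$ and $\widetilde{\yx}$), deduces a bound on its Fourier coefficients, and sums $2N+1$ of them. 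Your split of the coefficient numerators into $\kappax^{\oy}(\widetilde{A_l}-A_l)+\widetilde{A_l}(\widetilde{\kappax}^{\oy}-\kappax^{\oy})$ is exactly the frequency-domain version of the same decomposition; your treatment of the second group via the identity $R_{l,N}'=R_{l-1,N}$ plus Bernstein's inequality for $l=0$ is a legitimate alternative to the paper's coefficient-counting argument, and it makes more transparent why the worst case is $l=0$ and why one needs $y$ separated from both $\yx$ and $\widetilde{\yx}$ (which is where the hypothesis $y\notin B_r(\yx)$ and the condition \cref{condition-for-r} enter, in both proofs). One point in your favor: you explicitly retain the term $E_1=\sum_{\abs{\oy}\le N}\bigl(\widetilde{\psiOy}(x)-\psiOy(x)\bigr)e^{\imath\oy y}$, which genuinely appears when one subtracts \cref{eq:definition-for-Fx-M} from \cref{eq:recovery-of-Fx} (since the reconstruction uses $\widetilde{\psiOy}(x)$, not $c_{\oy}(\Fx)$), and you correctly show it is $O(A_xN^{d+1}M^{-d-1})=O(A_xN^{-d-1})$ under $N^2\le M$; the paper's written decomposition of $\widetilde{\Fx}-F_{x,N}$ silently drops this contribution, so your version is actually the more complete one. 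Both routes yield the same final form $C_{10,d}\frac{A_x(1+A_x)}{\abs{A_0(x)}}\rgamgam N^{-d-1}$ after absorbing $R_{\Gamma_{\mathbb{T}}}$, $\hl[T]^{*}$, $R_d$, $\h$ into $\rgamgam$ using $\abs{A_0(x)}\le A_x$.
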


\begin{proof}
    Consider
    \begin{equation*}
        \abs{\widetilde{\Fx}(y) - \Fx(y)} \leq \abs{\widetilde{\Fx}(y) - F_{x,N}(y)}+\abs{F_{x,N}(y)-\Fx(y)}.
    \end{equation*}
    We begin evaluating the second term on the right side of the above inequality.  From
    $\gammax = \Fx - \Phidx$, and applying~\cref{decomposition-of-Fx,eq:absolut-bound-for-gamma}, we have:
    \begin{align} \label{second-term-bound}
        \begin{split}
            \abs{F_{x,N}(y) - \Fx(y)} &= \abs{\sum_{\abs{\oy} \leq N} c_{\oy}\left( \gammax \right)e^{\imath\oy y} - \gammax(y) } \\
            &= \abs{\sum_{\abs{\oy} > N} c_{\oy}\left( \gammax \right)e^{\imath\oy y} } \leq C^{**} R_{\Gamma_{\mathbb{T}}} N^{-d-1},
        \end{split}
    \end{align}
    where $C^{**} \in \mathbb{R}^{+}$.

    Now we consider the first term. Letting
    $\Theta_{x}\coloneqq \widetilde{\Phidx} - \Phidx$ we claim that:
    \begin{align*}
        \abs{\widetilde{\Fx}(y) - F_{x,N}(y)} &= \abs{\sum_{\abs{\oy}\leq N}\left( c_{\oy}\left( \Phidx \right) -
        c_{\oy}\left(\widetilde{\Phidx} \right)\right)e^{\imath\oy y} + \widetilde{\Phidx}(y) - \Phidx(y)} \\
        &= \abs{\sum_{\abs{\oy}\leq N} c_{\oy}\left( \Theta_{x} \right)e^{\imath\oy y} - \Theta_{x}(y)}.
    \end{align*}
    Now using~\cref{bound-on-yx,Ax-tilde-bound} we claim that there exist $\beta_x(N)$ and $\beta_{l,x}(N)$ for
    each $0\leq l \leq d$ s.t.:
    \begin{eqnarray} \label{y-tiled-x-and-A-tiled-x}
        \begin{split}
            \widetilde{\yx} &= \yx + \beta_x(N),&\quad \abs{\beta_x(N)}
            &\leq C_{2,d}\frac{A_x}{\abs{A_{0}(x)}} \rgamgam N^{-d-2}, \\
            \widetilde{A_{l}}(x) &= A_{l}(x) + \beta_{l,x}(N),&\quad \abs{\beta_{l,x}(N)}
            &\leq C_{6,d-l} \frac{A_{x}}{\abs{A_{0}(x)}} \rgamgam N^{l-d-1}.
        \end{split}
    \end{eqnarray}
    Applying~\cref{eq:phidx} we get:
    \begin{align*}
        \Theta_{x}(y) &= \sum_{l=0}^{d} \left(\widetilde{A_{l}}(x)V_{l}(y;\widetilde{\yx})
        - A_{l}(x) V_{l}(y;\yx) \right) \\
        &= \sum_{l=0}^{d} \left(\widetilde{A_{l}}(x)V_{l}(y;\widetilde{\yx})
        - \left( \widetilde{A_{l}}(x) - \beta_{l}(N) \right) V_{l}(y;\yx) \right)\\
        &= \sum_{l=0}^{d} \widetilde{A_{l}}(x) \Bigl( V_{l}(y;\widetilde{\yx}) - V_{l}(y;\yx) \Bigr)
        + \sum_{l=0}^{d} \beta_{l,x}(N) V_{l}(y;\yx) \\
        &= \sum_{l=0}^{d} \widetilde{A_{l}}(x) \Bigl( V_{l}(y;\yx)+\beta_{l,x}(N)) - V_{l}(y;\yx)
        \Bigr)  + \sum_{l=0}^{d} \beta_{l,x}(N) V_{l}(y;\yx).
    \end{align*}
    
    Now let $r>0$ be given.
    Since $\inf_{x\in\torus[1]}|A_0(x)|=A_L>0$, for large enough $N'$ we will have
    \begin{gather}\label{condition-for-r}
        \beta_x(N)<r \text{ for all } N > N^{'}, \text{ uniformly in } x.
    \end{gather}
    
    For every $\epsilon<r$, we define:
    \begin{gather*}\label{U-l-eps-func}
        U_{l,\epsilon}(y) \coloneqq V_{l}\left( y;\yx+\epsilon \right) - V_{l}\left( y;\yx \right)
    \end{gather*}
    and take $\epsilon = \beta_x(N)$ to get:
    \begin{gather*}
        \Theta_{x}(y) = \sum_{l=0}^{d} \beta_{l,x}(N)V_{l}(y;\yx) + \sum_{l=0}^{d} \widetilde{A_{l}}(x)
        U_{l,\beta_x(N)}(y).
    \end{gather*}
    Once again we denote:
    \begin{gather*}\label{z-func-at-x}
        \mathcal{Z}_x(y) = \sum_{l=0}^{d} \beta_{l,x}(N) V_{l}(y;\yx) \\
        \mathcal{W}_{x}(y) = \sum_{l=0}^{d} \widetilde{A_{l}}(x) U_{l,\beta_x(N)}(y)
    \end{gather*}
    and write:
    \begin{gather*}
        \Theta_{x}(y) = \mathcal{Z}_x(y) + \mathcal{W}_{x}(y).
    \end{gather*}
    Therefore
    \begin{align*}
        \abs{\widetilde{\Fx}(y) - F_{x,N}(y)} &= \abs{ \sum_{\abs{\oy} \leq N} c_{\oy}(\Theta_{x})
            e^{\imath\oy y} - \Theta_{x}(y) } \\
        &= \abs{ \sum_{\abs{\oy} \leq N} \Bigl( c_{\oy}(\mathcal{Z}_x) + c_{\oy}(\mathcal{W}_{x}) \Bigr)
            e^{\imath\oy y} - \Theta_{x}(y)} \\
        &= \abs{ \sum_{\abs{\oy} \leq N} \Bigl( c_{\oy}(\mathcal{Z}_x) + c_{\oy}(\mathcal{W}_{x}) \Bigr)
            e^{\imath\oy y} - \mathcal{Z}_x(y) - \mathcal{W}_{x}(y)} \\
        &\leq \abs{ \sum_{\abs{\oy} \leq N} c_{\oy}(\mathcal{Z}_x) e^{\imath\oy y} - \mathcal{Z}_x(y)}
        + \abs{ \sum_{\abs{\oy} \leq N} c_{\oy}(\mathcal{W}_{x}) e^{\imath\oy y} - \mathcal{W}_{x}(y)} \\
        &= \abs{ \sum_{\abs{\oy} > N} c_{\oy}(\mathcal{Z}_x) e^{\imath\oy y} } + \abs{ \sum_{\abs{\oy}
        \leq N} c_{\oy}(\mathcal{W}_{x}) e^{\imath\oy y} - \mathcal{W}_{x}(y)}.
    \end{align*}
    Considering $\mathcal{Z}_x(y) = \sum_{l=0}^{d} \beta_{l,x}(N) V_{l}(y;\yx)$, we have that:
    \begin{gather*}
        c_{\oy}\left( \mathcal{Z}_x\right) = c_{\oy}\left(\sum_{l=0}^{d} \beta_{l,x}(N) V_{l} \right)
        = \sum_{l=0}^{d} \beta_{l,x}(N) c_{\oy} \left( V_{l} \right),
    \end{gather*}
    and since the function $V_{l}$ is in $C_{\mathbb{T}}^{l}$ there exists $S_{l}\in\mathbb{R}^{+}$
    such that:
    \begin{gather*}
        \abs{ \sum_{\abs{\oy} > N} c_{\oy} \left( V_{l} \right) e^{\imath\oy y}} \leq S_{l}.
        N^{-l}
    \end{gather*}
    Consequently, we have that:
    \begin{align}
        \begin{split}
            \notag
            \abs{ \sum_{\abs{\oy} > N} c_{\oy}(\mathcal{Z}_x) e^{\imath\oy y} } &=
            \abs{\sum_{\abs{\oy} > N} \Bigl( \sum_{l=0}^{d} \beta_{l,x}(N) c_{\oy} \left( V_{l} \right)
            \Bigr) e^{\imath\oy y} } \\
            &\leq \sum_{l=0}^{d} \abs{\beta_{l,x}(N)} \abs{ \sum_{\abs{\oy} > N} c_{\oy} \left( V_{l}
            \right) e^{\imath\oy y}} \\
            &\leq (d+1) C_{6,d-l} \frac{A_{x}}{\abs{A_{0}(x)}} \rgamgam N^{l-d-1} \sum_{\abs{\oy} >
            N}  \abs{ c_{\oy} \left( V_{l} \right)} \\
            &\leq (d+1)C_{6,d-l} \frac{A_{x}}{\abs{A_{0}(x)}} \rgamgam N^{l-d-1} S_{l} N^{-l} \\
            &= (d+1) C_{6,d-l} S_{l} \frac{A_{x}}{\abs{A_{0}(x)}} \rgamgam N^{-d-1}.
        \end{split}
    \end{align}
    Denoting:
    \begin{align}
        C_{7,d} &\coloneqq (d+1)\cdot \max\limits_{0\leq l\leq d}\left\{C_{6,d-l} \cdot S_l\right\}
    \end{align}
    we get:
    \begin{align}
        \abs{ \sum_{\abs{\oy} > N} c_{\oy}(\mathcal{Z}_x) e^{\imath\oy y} } &\leq C_{7,d} \frac{A_{x}}{\abs{A_{0}(x)}}
        \rgamgam N^{-d-1} \label{bound-on-Z}.
    \end{align}

    Now we move on to $\mathcal{W}_{x}(y) = \sum_{l=0}^{d} \widetilde{A_{l}(x)} U_{l,\beta_x(N)}(y)$.
    The function $U_{l,\beta_x(N)}(y)$ is defined at the ``\textit{jump-free}'' region so there exists $C_{4} \in \mathbb{R}$
    (uniformly in $x$) such that $\abs{U_{l,\beta_x(N)}(y)} \leq C_{4}\epsilon$.
    This bound can be obtained by Taylor-expanding the function $V_{l}(y;\yx+\epsilon)$ at $\epsilon = 0$.
    In particular, for $\epsilon = \beta_x(N)$ we have:
    \begin{align*}
        \abs{ \mathcal{W}_{x}(y) } &= \abs{\sum_{l=0}^{d} \widetilde{A_{l}}(x) U_{l,\beta_x(N)}(y)} \leq \sum_{l=0}^{d} \abs{ \widetilde{A_{l}}(x)} \abs{ U_{l,\beta_x(N)}(y)} \leq \sum_{l=0}^{d} A_{x} \cdot C_{4} \cdot \abs{\beta_x(N)} \\
        &\leq \sum_{l=0}^{d} \left( C_{4} \cdot C_{2,d}\frac{A_{x}^2}{\abs{A_{0}(x)}} \rgamgam N^{-d-2} \right)\\
        &= (d+1)\cdot C_{4} \cdot C_{2,d}\frac{A_{x}^2}{\abs{A_{0}(x)}} \rgamgam N^{-d-2}.
    \end{align*}
    \indent Denoting $C_{8,d} = (d+1)\cdot C_{2,d} \cdot C_{4}$ we get:
    \begin{gather}
        \label{bound-on-W}
        \abs{ \mathcal{W}_{x}(y) } \leq C_{8,d} \frac{A_{x}^2}{\abs{A_{0}(x)}} \rgamgam N^{-d-2}.
    \end{gather}
    Continuing our analysis we look at $U_{l,\epsilon}(y)$, for $\epsilon = \beta_x(N)$:
    \begin{align*}
        U_{l,\epsilon}(y) &= V_{l}\left( y;\yx+\epsilon \right) - V_{l}\left( y;\yx \right) \\
        \\
        &=\frac{(2\pi)^{l}}{(l+1)!} \left( B_{l+1} \left( \frac{y-( \yx + \beta_x(N))}{2\pi} \right) - B_{l+1}
        \left( \frac{y-\yx}{2\pi} \right) \right) \\
        \\
        &=\frac{(2\pi)^{l}}{(l+1)!} \left( B_{l+1} \left( \frac{y-\widetilde{\yx}}{2\pi} \right) - B_{l+1}
        \left( \frac{y-\yx}{2\pi} \right) \right).
    \end{align*}
    Therefore, there exists a constant
    $C_{5}$ such that in the region of length $\beta_x(N)$ between $\yx$ and $\widetilde{\yx}$ we have:
    \begin{gather}
        \label{bound-on-U-between-xi-xi-tiled}
        \abs{U_{l,\beta_x(N)}(y)} \leq C_{5} \cdot A_{x}.
    \end{gather}
    As mentioned before, at the ``\textit{jump-free}'' area we have:
    \begin{gather} \label{bound-on-U-in-jump-free}
        \abs{U_{l,\beta(N)}(y)} \leq C_{4} C_{2,d}\frac{A_x^2}{\abs{A_{0}(x)}} \rgamgam N^{-d-2}.
    \end{gather}
    We conclude that therefore exists a constant $C_{6}$ such that the Fourier coefficients of $\mathcal{W}_{x}$ are bounded by:
    \begin{gather*}
        \abs{ c_{\oy}\left( \mathcal{W}_{x} \right) } \leq C_{6} C_{2,d}\frac{A_x^2}{\abs{A_{0}(x)}} \rgamgam N^{-d-2}
    \end{gather*}
    which in turn allows us to conclude that exists another constant $C_{7}$ such that:
    \begin{gather} \label{bound-on-trunc-sum-of-W}
        \abs{ \sum_{\abs{\oy}\leq N} c_{\oy}(\mathcal{W}_{x}) e^{\imath\oy y} } \leq
        C_{7} C_{2,d}\frac{A_{x}^2}{\abs{A_{0}(x)}} \rgamgam N^{-d-1}.
    \end{gather}

    Now by using~\cref{bound-on-Z,bound-on-W,bound-on-U-between-xi-xi-tiled,bound-on-U-in-jump-free,bound-on-trunc-sum-of-W}
    we get:
    \begin{align*}
        \abs{\widetilde{\Fx}(y) - F_{x,N}(y)} &\leq \abs{ \sum_{\abs{\oy} > N} c_{\oy}(\mathcal{Z}_x) e^{\imath\oy y} } + \abs{ \sum_{\abs{\oy}
        \leq N} c_{\oy}(\mathcal{W}_{x}) e^{\imath\oy y} - \mathcal{W}_{x}(y)} \\
        &\leq \abs{ \sum_{\abs{\oy} > N} c_{\oy}(\mathcal{Z}_x) e^{\imath\oy y} } + \abs{ \sum_{\abs{\oy}
        \leq N} c_{\oy}(\mathcal{W}_{x}) e^{\imath\oy y}} + \abs{ \mathcal{W}_{x}(y) } \\
        &\leq C_{7,d} \frac{A_{x}^2}{\abs{A_{0}(x)}} \rgamgam N^{-d-1} +
        C_{7} C_{2,d}\frac{A_{x}^2}{\abs{A_{0}(x)}} \rgamgam N^{-d-1} +
        C_{8,d} \frac{A_{x}^2}{\abs{A_{0}(x)}} \rgamgam N^{-d-2} \\
        &\leq \left(C_{7,d} + C_{7} C_{2,d} + C_{8,d} \right) \frac{A_{x}^2}{\abs{A_{0}(x)}} \rgamgam N^{-d-1}.
    \end{align*}
    \noindent Denoting:
    \begin{gather} \label{H-x}
        C_{9,d} \coloneqq C_{7,d} + C_{7} C_{2,d} + C_{8,d}
    \end{gather}
    we have:
    \begin{gather}\label{first-term-bound}
        \abs{\widetilde{\Fx}(y) - F_{x,N}(y)} \leq C_{9,d}\frac{A_{x}^2}{\abs{A_{0}(x)}} \rgamgam N^{-d-1},
    \end{gather}
    and by applying~\cref{first-term-bound,second-term-bound} we have:
    \begin{align}
        \begin{split}
            \abs{\widetilde{\Fx}(y) - \Fx(y)} &\leq \abs{\widetilde{\Fx}(y) - F_{x,N}(y)}+\abs{F_{x,N}(y)-\Fx(y)} \\
            &\leq \left( C_{9,d}\frac{A_{x}^2}{\abs{A_{0}(x)}} \rgamgam + C^{**} R_{\Gamma_{\mathbb{T}}} \right) N^{-d-1}.
        \end{split}
    \end{align}
    By the definition of $\rgamgam$ (see~\cref{r-gamma-gamma}) we claim that $R_{\Gamma_{\mathbb{T}}} \leq \rgamgam$ which implies that
    \begin{gather*}
        \left( C_{9,d}\frac{A_{x}^2}{\abs{A_{0}(x)}} \rgamgam + C^{**} R_{\Gamma_{\mathbb{T}}} \right) N^{-d-1} \leq
        \left( C_{9,d}\frac{A_{x}^2}{\abs{A_{0}(x)}} + C^{**} \right) \rgamgam N^{-d-1}.
    \end{gather*}
    Finally, by denoting
    \begin{gather}\label{final-constant-point-wise}
        C_{10,d} = \max \left\{ C_{9,d},\; C^{**} \right\}
    \end{gather}
    we obtain
    \begin{gather*}\label{final-pointwise-bound}
        \abs{\widetilde{\Fx}(y) - \Fx(y)} \leq C_{10,d}\frac{A_{x}(1+A_x)}{\abs{A_{0}(x)}} \rgamgam N^{-d-1},
    \end{gather*}
    which completes the proof.
\end{proof}

\subsection{The full algorithm}\label{subsec:algorithm}
%

In this section we present the entire algorithm for reconstructing the slices $F_x$ of $F$ from its Fourier coefficients.
    
The 1D reconstruction procedure from \cite{batenkov2012algebraic,batenkov2015complete}, is summarized in \cref{alg:full-order-algorithm} below, while our 2D algorithm is presented in \cref{alg:2D reconstruction}.

 \begin{algorithm}
    \caption{Half-order reconstruction algorithm,~\cite{batenkov2012algebraic}}
    Let $f\in \PC$ and assume that $f=\gamma + \varphi$ where $\gamma\in\C$ and $\phi$ is a piecewise polynomial of degree
    $d$ absorbing all of $f$'s discontinuities.
    \begin{algorithmic}[1]
        \Require Fourier coefficients of $\left\{ c_{k}(f) \right\}_{\abs{k}\leq M}$ s.t. $M\gg 1$.
        \State Fix a reconstruction order $d_1\leq \lfloor \frac{d}{2} \rfloor$
        \If{jump location unknown}
            \State Solve the polynomial in ~\cite[eq.~3.3]{batenkov2012algebraic} with $k=M$ and $d=d_1$
            \State Take $\widetilde{\omega}$ to be the closest root to the unit circle
            \State Take $\widetilde{\xi} = \arg\left(\widetilde{\omega}\right)$ as the approximation of the actual discontinuity point
        \Else{ }
            \State Take $\widetilde{\xi} = x_0$
        \EndIf
        \State Obtain the jump magnitudes $\left\{ \widetilde{A_{l}} \right\}_{l=0}^{d_1}$ by using $\widetilde{\omega}$ and by solving the
        linear system in~\cite[eq.~3.5]{batenkov2012algebraic} with $k=M,\ldots,M+d_1$
        \State Obtain coefficients of $\widetilde{\gamma}$, $\left\{ c_k(\widetilde{\gamma}) \right\}_{|k|\leq M}$
        by~\cite[Algorithm,~2.2]{batenkov2012algebraic}
        \State Take the final approximation
        \begin{align*}
            \widetilde{f}(x)&=\widetilde{\gamma}(x)+\widetilde{\phi}(x)=\sum_{|k|\leq M}c_{k}(\widetilde{\gamma})e^{\imath k x} + \sum_{l=0}^{d_1}\widetilde{A_{l}}V_{l}(x;\widetilde{\xi}).
        \end{align*}
    \end{algorithmic}\label{alg:half-order-algorithm}
 \end{algorithm}

 \begin{algorithm}
    \caption{Full-order reconstruction algorithm,~\cite{batenkov2015complete}}
    Let $f:\PC$ and assume that $f=\gamma + \varphi$ where $\gamma\in\C$ and $\phi$ is a piecewise polynomial of degree
    $d$ absorbing all of $f$'s discontinuities.
    \begin{algorithmic}[1]
        \Require Fourier coefficients of $\left\{ c_{k}(f) \right\}_{\abs{k}\leq M}$ s.t. $M\gg 1$.
        \State Use~\cref{alg:half-order-algorithm} to approximate
        $\widetilde{\omega}_h = e^{-\imath\widetilde{\xi}_h}$
        \State Take $N=\lfloor \frac{M}{d+2} \rfloor$
        \State Construct the polynomial $q_{N}^{d}(u)=\sum_{j=0}^{d+1} (-1)^j \binom{d+1}{j}\widetilde{m}_{(j+1)N}u^{d+1-j}$ where
        $\widetilde{m}_{k}\coloneqq 2\pi (\imath k)^{d+1}c_{k}(f)$, and find it's roots.
        \State Take $\widetilde{z}$ to be the closest root to the unit circle and denote $\sqrt[N]{Z_{q_N^d}}$ as the set of $N$ possible
        values of $\sqrt[N]{\widetilde{z}}$
        \State Take $\widetilde{\omega}_f\coloneqq \arg\min\limits_{z\in\sqrt[N]{Z_{q_N^d}}} \Big\{ \big|z - \widetilde{\omega}_h \big| \Big\}$
        and set $\widetilde{\xi}_f = -\arg(\widetilde{\omega}_f)$
        \State Recover the jump magnitudes by solving the perturbed linear system
            \[\begin{bmatrix}
                \widetilde{m}_N\widetilde{\omega}_f^{-N}   \\
                \widetilde{m}_{2N}\widetilde{\omega}_f^{-2N} \\
                \vdots                                      \\
                \widetilde{m}_{(d+1)N}\widetilde{\omega}_f^{-(d+1)N}
            \end{bmatrix}
            = V_{N}^{d} \cdot
            \begin{bmatrix}
                \widetilde{\alpha}_{0} \\
                \widetilde{\alpha}_{1} \\
                \vdots          \\
                \widetilde{\alpha}_{d}
            \end{bmatrix}
            \]
        and extracting $\left\{ \widetilde{A_l} \right\}_{l=0}^{d}$ by using $\widetilde{A}_{l} = (-\imath)^{d-l}\widetilde{\alpha}_{d-l}$
        \State Obtain coefficients of $\widetilde{\gamma}$ using $c_k(\widetilde{\gamma}) = c_k(f)-c_k(\widetilde{\phi})$,
        where \[c_k(\widetilde{\phi})=\frac{\widetilde{\omega}_f^k}{2\pi} \sum_{l=0}^{d}\frac{\widetilde{A_l}}{(\imath k)^{l+1}}\]
        \State Take the final approximation
        \begin{align*}
            \widetilde{f}(x)&=\widetilde{\gamma}(x)+\widetilde{\phi}(x)=\sum_{|k|\leq M}c_{k}(\widetilde{\gamma})e^{\imath k x} + \sum_{l=0}^{d_1}\widetilde{A_{l}}V_{l}(\widetilde{\xi}).
        \end{align*}
    \end{algorithmic}\label{alg:full-order-algorithm}
 \end{algorithm}

 \begin{algorithm}
    \caption{2D reconstruction}
    Let $F:\torus \to \mathbb{R}$ and $\Fx:\torus[1]\to\mathbb{R}$ as defined in~\cref{description-of-F-and-xi} and
    $\psiOy\in\PCoy$ as defined in~\cref{def-of-psi-omega-y,psi-oy-properties}.
    \begin{algorithmic}[1]
        \Require $(2M+1) \times (2N+1)$ Fourier coefficients of $F$ s.t. $N^2 \leq M$
        and a sets of samples $X=\left\{ x_0,\ldots,x_{n} \right\}\subset\torus[1]$,
        $Y = \left\{ y_0,\ldots,y_{m} \right\}\subset\torus[1]$
        \For{$x\in X$}
            \For{$\oy \gets -N \to N$}
                \State $\widetilde{\psiOy}(x)\gets$ apply~\cref{alg:full-order-algorithm} using 
                $\left\{ c_{\ox}(\psiOy) \right\}_{\abs{\ox}\leq M}$ and known jump location $x_0=-\pi$
            \EndFor
            \For{$y\in Y$}
                \State $\widetilde{F_x}(y)\gets$ apply~\cref{alg:full-order-algorithm} using
                $\left\{ \widetilde{\psiOy}(x) \right\}_{|\oy|\leq N}$
            \EndFor
        \EndFor
    \end{algorithmic}\label{alg:2D reconstruction}
 \end{algorithm}

\section{Numerical Experiments}\label{sec:numerical-experiments}
In this section we provide simulations with the primary goal of validating the asymptotic accuracy predictions
developed in this work.
We provide the actual Fourier coefficients, which were calculated analytically, for the function
$F:\torus[2]\to\mathbb{R}$ which is defined as follows:
\begin{gather}
    U_n(y)\coloneqq
    \begin{cases}
        B_{n+1}(\frac{y+2\pi}{2\pi}),\quad y\in [-2\pi,0) \\
        B_n(\frac{y}{2\pi}),\quad y\in [0, 2\pi)
    \end{cases}
\end{gather}
as $B_n(y)$ is the Bernoulli function~\cite{sun2004identities}
\begin{align*}
    V_n(x,y) &\coloneqq -\frac{(2\pi)^n}{(n+1)!}U_n(y-\xi(x)) \\
    \Phi_d(x,y) &\coloneqq \sum_{l=0}^d A_{l}V_{l}(x,y)
\end{align*}
where $\xi(x)$ is the discontinuity curve and $A_l(x)$ is the jump magnitude at $x$.
Setting $\xix=x$ and $\Al = 1$ as $l=0,\ldots,d$ we simulate $F:\torus[2]\to\mathbb{R}$ which is defined by
\begin{gather}
    F(x,y)\coloneqq \Phi_{11}(x,y)
\end{gather}
In our simulation we fix a point $x\in\torus[1]$ to get the slice $F_x$. We set the reconstruction orders of $\psiOy$ and of $\Fx$ to 9, i.e. $d=9$. We further set number of the given Fourier coefficients for the unknown function $F:\torus[2]\to\mathbb{R}$ as different sets
of $\left( M, N \right)$ where $ \left( M, N \right) = \left( N^2, N \right)$. In the figures below we calculate the following approximation errors:
\begin{enumerate}
    \item The error in approximating $\Fx$ over a set $Y\subset\torus[1]$
    \begin{gather*}
        \triangle \Fx \coloneqq \max \left\{ \abs{\Fx(y) - \widetilde{\Fx}(y)}\colon y\in\torus[1]\right\},\quad x\in \torus[1]
    \end{gather*}
    which is shown in~\cref{fig:fx-tf-vs-moy}.
    \item The error in approximating the discontinuity curve $\xi(x)$
    \begin{gather*}
        \triangle\xi\coloneqq \max\limits_{x\in\torus [1]} \left\{ \abs{\xi(x) - \widetilde{\xi}(x)} \right\}
    \end{gather*}
    which is shown in~\cref{fig:xi-vs-moy}.
    \item The error in approximating the jump magnitudes:
    \begin{gather*}
        \triangle A_{l} \coloneqq \max\limits_{x\in\torus[1]} \left\{ \abs{A_l(x) - \widetilde{A_{l}}(x)}\right\},\: l=0,\ldots,d
    \end{gather*}
    which is shown in~\cref{fig:al-vs-moy}.
\end{enumerate}
The simulations were performed in a standard Python environment using the \href{https://mpmath.org}{mpmath} library
which in turn allowed us to perform our calculations with a varying precision up to 100 decimal places.
\begin{figure}[H]
    \centering
    \subfloat[\centering Exact $F$ and $\xi$\label{fig:exact-F}]{{\includegraphics[width=0.49\linewidth]{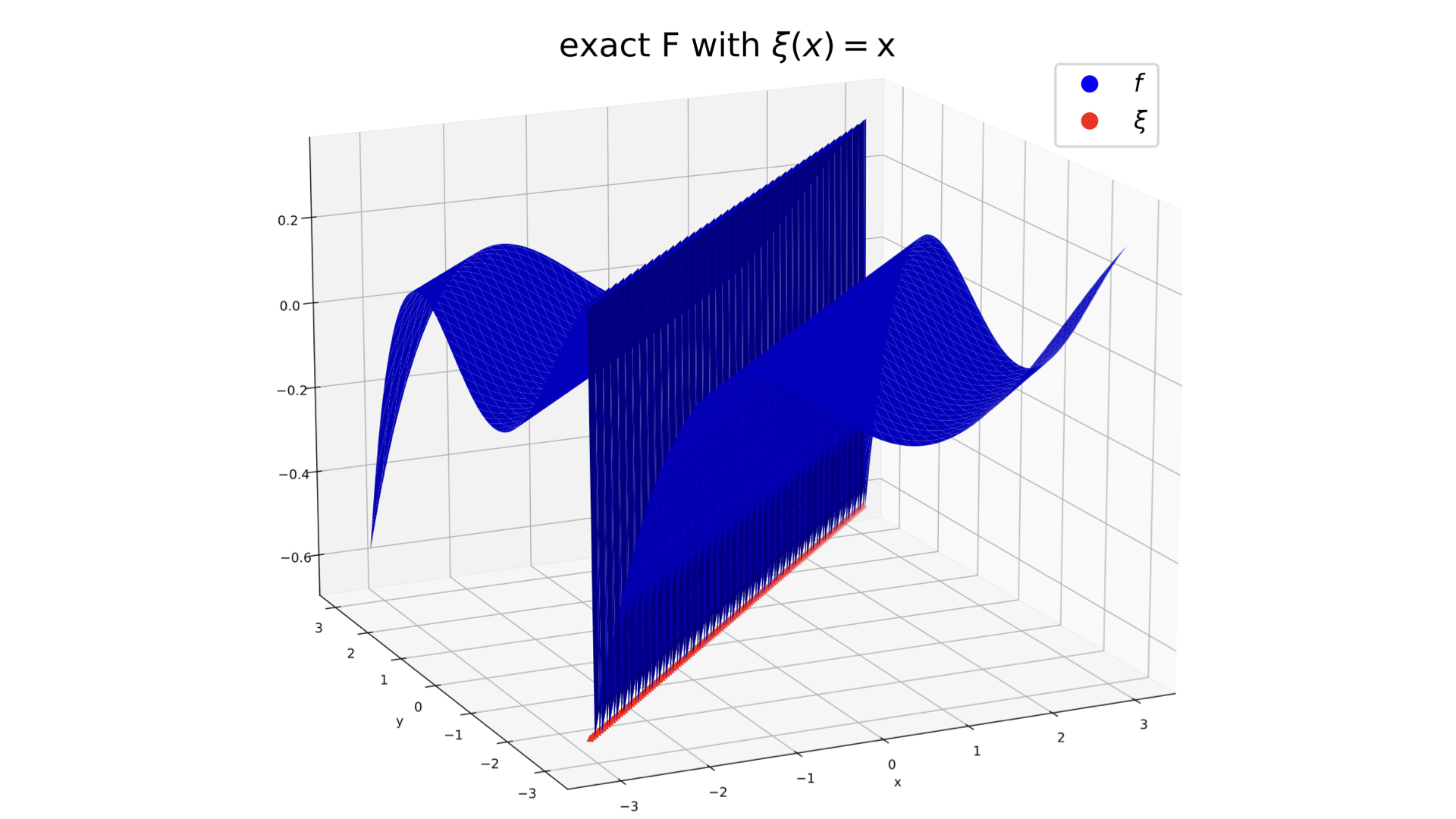}}}%
    \hfill
    \subfloat[\centering $\tilde{F}$ and $\tilde{\xi}$ \label{fig:approximate-F}]{{\includegraphics[width=0.49\linewidth]{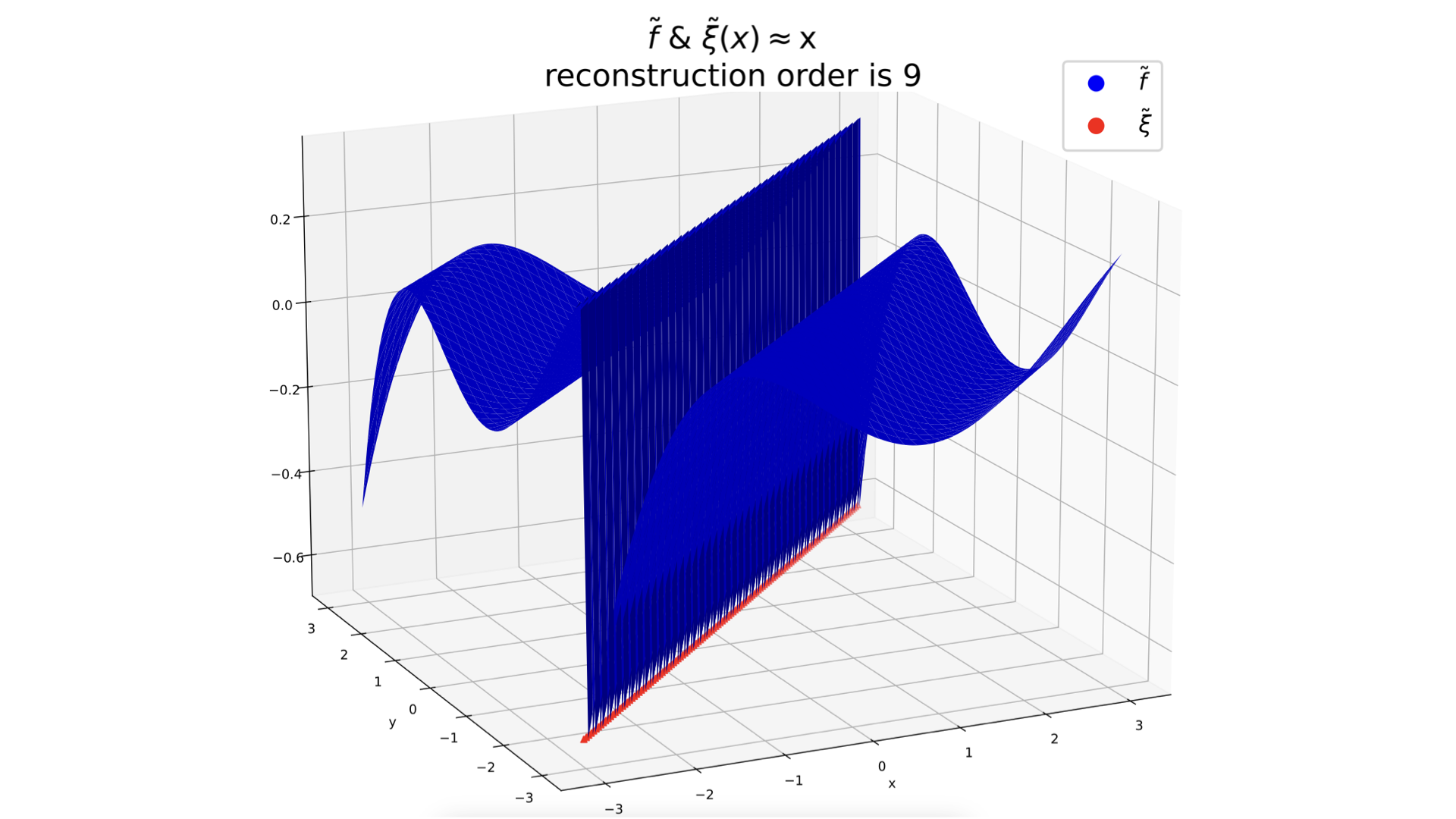}}}%
    \caption{Exact function with its discontinuity curve and approximated functions with its approximated discontinuity curve}
    \label{fig:f-and-xi}
\end{figure}

\begin{figure}[H]
    \centering
    \subfloat[\centering $\Delta \Fx$ vs $\Delta \mathcal{T}_x$ vs $N$ \label{fig:fx-tf-vs-moy}]{{
        \includegraphics[width=0.32\linewidth]{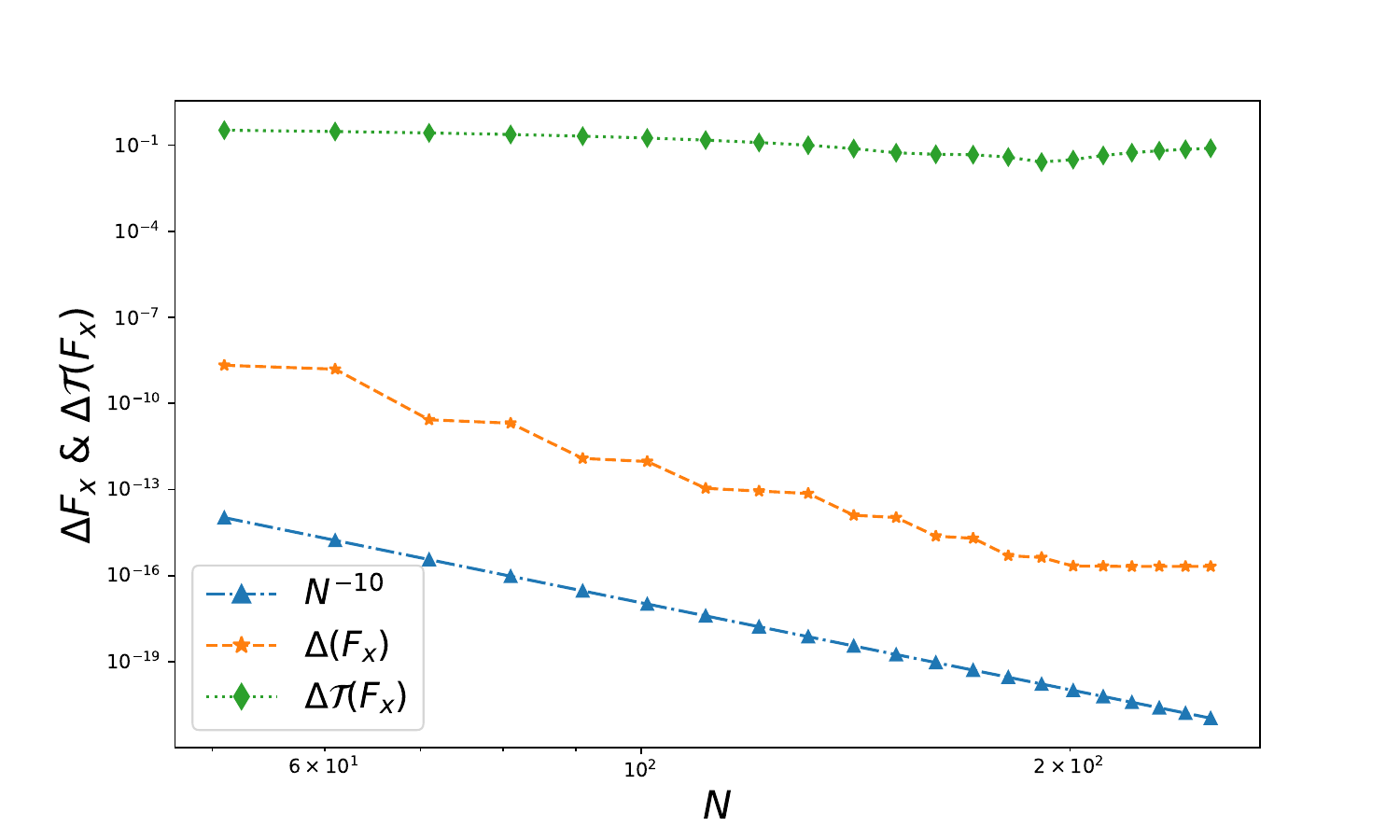}}}%
    \hfill
    \subfloat[\centering $\Delta \xix$ vs $N$ \label{fig:xi-vs-moy}]{{
        \includegraphics[width=0.32\linewidth]{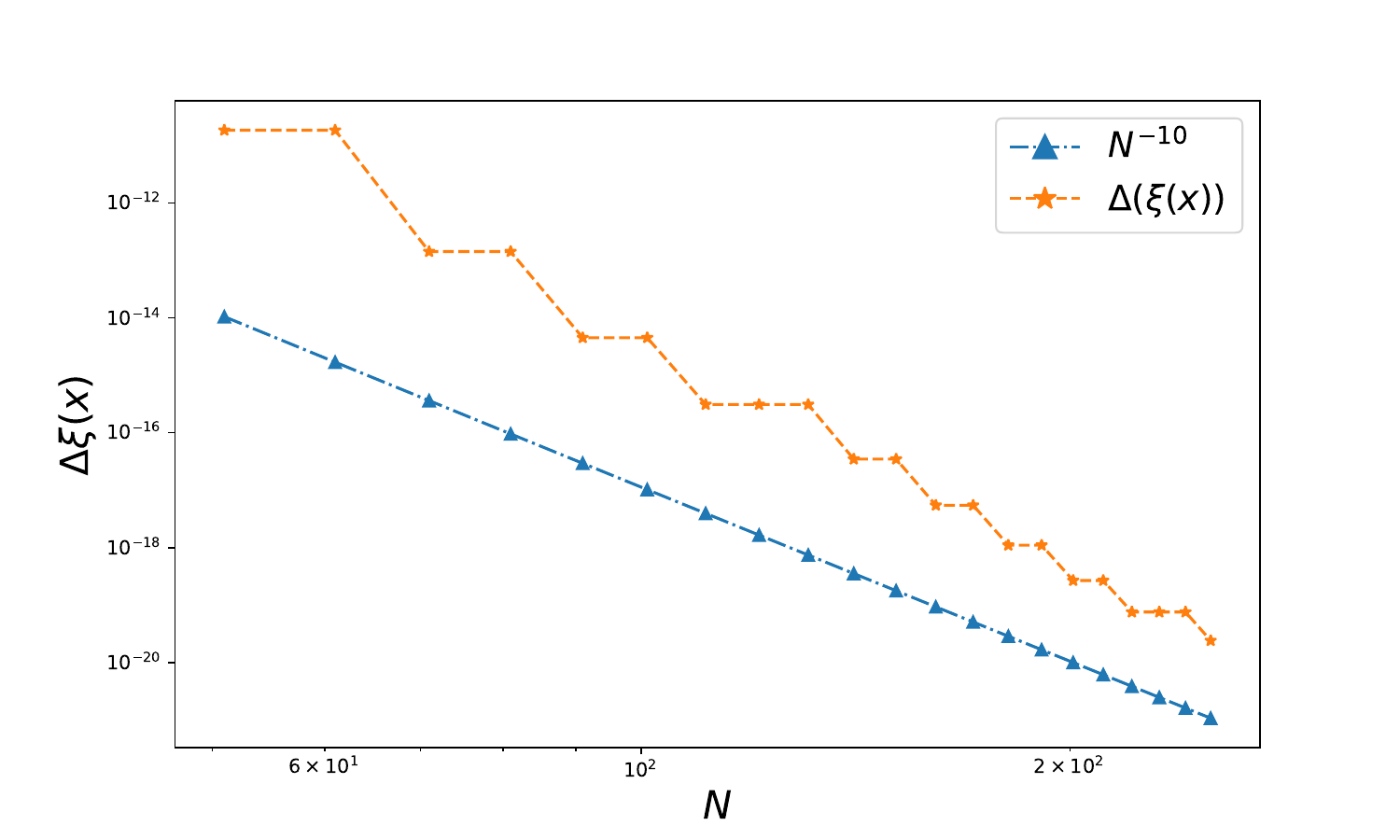}}}%
    \hfill
    \subfloat[\centering $\Delta A_l(x)$ vs $N$ for $l=0\ldots, 9$ \label{fig:al-vs-moy}]{{
        \includegraphics[width=0.32\linewidth]{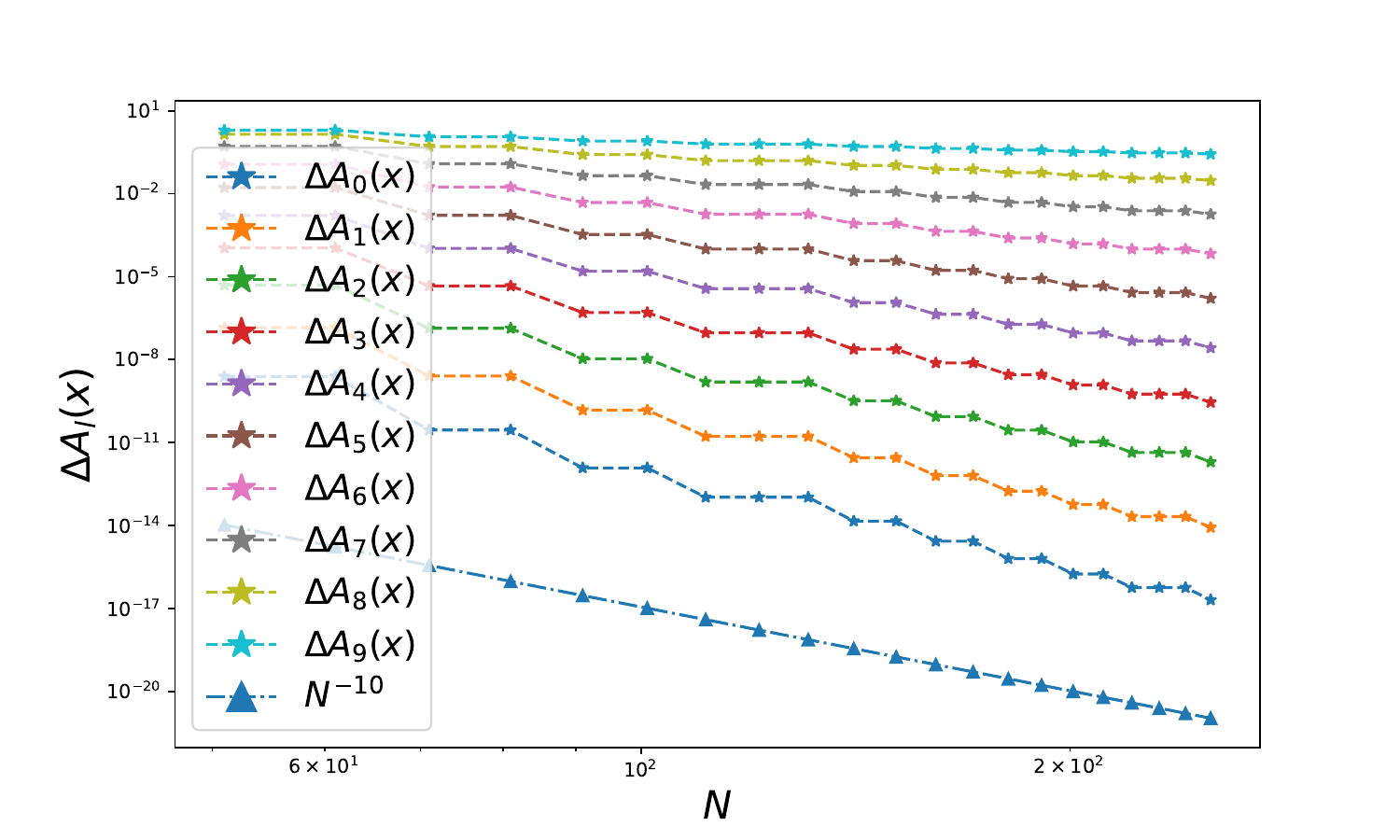}}}%
    \caption{Approximation errors computing by our algorithm for the function in \cref{fig:approximate-F}.}
    \label{fig:fx_xi_al-vs-moy}
\end{figure}

In~\cref{fig:fx-tf-vs-moy} we present the maximal error in approximating $F_x$ at $x=1.1$ using our method vs the maximal
error in approximating $F_x$ using a truncated Fourier sum.
In~\cref{fig:xi-vs-moy} we present the maximal error in approximating the jump location of $F_x$ at $x=1.1$ ($\tilde{\xi}(1.1)$)
using our method and in~\cref{fig:al-vs-moy} we present maximal error in approximating $\frac{d^i}{dx^i}(F_x)$ as $0\leq i \leq 9$
jump magnitudes at $x=1.1$.
All plots include a graph of $N^{-10}$ for comparison with the theoretical bounds presented in~\cref{thm:general-theorem-for-fx}.

The code implementing the algorithms described in~\cref{subsec:algorithm} and reproducing the results of this section is available at \url{https://github.com/mlevinov/algebraic-fourier-2d}.

\section{Discussion and future work}\label{sec:future-work}

In this paper we proposed a method for reconstructing a 2D function $F:\torus[2]\to\mathbb{R}$ from its Fourier coefficients. Our model of the function was restricted to only two continuity pieces. We have demonstrated algebraic convergence of the method, with rates commensurate with the smoothness of the pieces.  There exist several natural extensions of our work in multiple directions.

\begin{enumerate}
    \item In order to recover the entire function, one would need to perform interpolation from the approximated slices, while taking into account the discontinuity curve as well.
    \item Our method can be extended to reconstructing functions with multiple discontinuity pieces, supported on domains with smooth boundaries $\left\{ \Sigma_{1},\ldots,\Sigma
    _K \right\}$. First, the discontinuity structure can be approximated by a low-order method (as done in the one-dimensional case in~\cite{batenkov2012algebraic,batenkov2015complete}). Then, each piece can be further localized by applying a mollifier to create a region containing a single
    discontinuity curve. An advantage of this approach is that the boundary discontinuity will disappear, enabling to apply fast algorithms in the first stage.
    \item The algebraic structure of the method can be extended to handle other orthonormal basis, such as Chebyshev polynomials. Related work in this direction can be found in \cite{eckhoff1993accurate,eckhoff1995accurate}. This would allow us to handle non-periodic functions with variable discontinuities more efficiently. 
    \item The fundamental relationship between $\widehat{F}$ and $\psiOy$ can be naturally extended to multiple dimensions by recursion. While this introduces additional symbolic complexity, we believe these challenges can be efficiently handled by modern computer algebra and automatic differentiation techniques.  
\end{enumerate}

\printbibliography

\appendix

\section{APPENDIX}\label{A}

\subsection{Auxiliary results}\label{subsec:auxiliary-results}
%
We first provide several auxiliary results.
\begin{lemma}\label{lem:derivative-of-gc-gs}
    Let $\oyInZ$, and $x\in\torus[1]$, $\xix $ as defined in~\cref{description-of-F-and-xi} and $A_{m}(x)$ as defined in~\cref{Fx-jump-mag-def}.
    Then by using the following notations
    \begin{itemize}
        \item $\xi_{i}(x) \coloneq \ddxk[i]\xix$
        \item $f_{c}(x) \coloneq \cos(\oy\xix)$
        \item $f_{s}(x) \coloneq \sin(\oy\xix)$
        \item $g_{c}(x) \coloneq f_{c}(x)\cdot\xi_{1}(x)$
        \item $g_{s}(x) \coloneq f_{s}(x)\cdot\xi_{1}(x)$
    \end{itemize}
    we have:
    \begin{gather*}
        \ddxk[i] g_{c}(x) = \ddxk[i]\left( f_{c}(x)\cdot\xi_{1}(x) \right) = \sum_{j=0}^{i}{a_{j}\cdot f_{c}^{(j)}(x)\cdot\xi_{i+1-j}}(x)\\
        \ddxk[i] g_{s}(x) = \ddxk[i]\left( f_{s}(x)\cdot\xi_{1}(x) \right) = \sum_{j=0}^{i}{b_{j}\cdot f_{s}^{(j)}(x)\cdot\xi_{i+1-j}(x)}
    \end{gather*}
    where $a_{j}, b_{j} \in\mathbb{R}$ for each $1\leq j\leq i$.
    \end{lemma}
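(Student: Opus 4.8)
The plan is to recognise both identities as direct instances of the general Leibniz rule for the $i$-th derivative of a product. First I would write $g_c=f_c\cdot\xi_1$ and $g_s=f_s\cdot\xi_1$ and record that $\xi_1=\xi'$ has $m$-th derivative $\xi_1^{(m)}=\xi^{(m+1)}=\xi_{m+1}$ (legitimate for all $m$ occurring below since $\xi\in C^{d_\xi}_{(-\pi,\pi)}$ with $d_\xi\geq d$), while $f_c(x)=\cos(\oy\xix)$ and $f_s(x)=\sin(\oy\xix)$ are smooth as compositions of smooth maps. The Leibniz formula then gives immediately
\begin{equation*}
    \ddxk[i] g_c(x)=\ddxk[i]\bigl(f_c(x)\cdot\xi_1(x)\bigr)=\sum_{j=0}^{i}\binom{i}{j}f_c^{(j)}(x)\,\xi_1^{(i-j)}(x)=\sum_{j=0}^{i}\binom{i}{j}f_c^{(j)}(x)\,\xi_{i+1-j}(x),
\end{equation*}
so the first assertion holds with $a_j=\binom{i}{j}\in\mathbb{R}$, and the identical computation for $g_s$ yields the second with $b_j=\binom{i}{j}$.

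If one wishes to avoid quoting Leibniz, the alternative is a one-line induction on $i$: the base case $i=0$ is the definition of $g_c,g_s$ (with $a_0=b_0=1$), and for the step one differentiates the assumed expansion $\ddxk[i] g_c=\sum_{j=0}^i a_j f_c^{(j)}\xi_{i+1-j}$ once more, applies the ordinary product rule termwise, and reindexes, so that the updated coefficients obey the Pascal recursion $a_j'=a_{j-1}+a_j$ and in particular remain real; the sine case is verbatim. I would also point out that the statement keeps $a_j,b_j$ as abstract reals rather than explicit binomials because in the later use these expansions are combined with further expansions of the derivatives $f_c^{(j)}$ and $f_s^{(j)}$, after which only the qualitative structure is needed --- a finite real-linear combination of products of derivatives of $\xi$ with $\cos$ or $\sin$ of $\oy\xix$.

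There is essentially no genuine obstacle here; this lemma is pure bookkeeping. The only points warranting (minor) care are the index shift $\xi_1^{(i-j)}=\xi_{i+1-j}$ and ensuring that $\xi$ is differentiated at most $d_\xi$ times, which is guaranteed by the hypothesis $d_\xi\geq d$ together with the range of $i$ appearing when the lemma is invoked.
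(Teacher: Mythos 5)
Your proposal is correct and takes essentially the same route as the paper: the paper proves the identity by induction on $i$ using the product rule, arriving at exactly the Pascal recursion $c_j=a_{j-1}+a_j$ that you describe in your alternative argument, while your primary route simply quotes the general Leibniz formula (with the same index shift $\xi_1^{(i-j)}=\xi_{i+1-j}$) and even pins down the coefficients as $a_j=b_j=\binom{i}{j}$, which the paper leaves abstract. No gap; this is the intended bookkeeping argument.
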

    \begin{proof}
        We will provide a proof only for $f_{c}(x)$, the proof for $f_{s}(x)$ is identical.
        First, by using the general Leibnitz rule ~\cite[Section 3.3]{stewart2016calculus} we have
        \begin{align}
            \begin{split}\label{eq:my_lebnitz}
                \ddxk[l]\left( g_{c}(x)\cdot A_{m}(x) \right)&=\sum_{i=0}^{l}{a_{i}\cdot g_{c}^{(i)}(x)\cdot A_{m}^{(l-i)}(x)}\\
                \ddxk[l]\left( g_{s}(x)\cdot A_{m}(x) \right)&=\sum_{i=0}^{l}{a_{i}\cdot g_{s}^{(i)}(x)\cdot A_{m}^{(l-i)}(x)}
            \end{split}
        \end{align}
        where $a_{i}\in\mathbb{R}$ for each $1\leq i\leq l$.
        Now, for $i=1$ we have
        \begin{equation*}
            g_{c}^{(1)}(x) = \left( f_{c}(x) \cdot \xi_{1}(x) \right)^{(1)} = f_{c}^{(1)}(x)\cdot f_{c}(x)\cdot\xi_{2}(x) =
            \sum_{j=0}^{1}{f_{c}^{(j)}(x)\cdot\xi_{2-j}(x)}.
        \end{equation*}
        Assuming that
        \begin{equation*}
            \ddxk[i] g_{c}(x) = \ddxk[i]\left( f_{c}(x)\cdot\xi_{1}(x) \right) = \sum_{j=0}^{i}{a_{j}\cdot f_{c}^{(j)}(x)\cdot\xi_{i+1-j}(x)},
        \end{equation*}
        we continue to
        \begin{gather*}
            \ddxk[i+1] g_{c}(x) = \ddx\left( \ddxk[i] g_{c}(x) \right) = \ddx\left( \sum_{j=0}^{i}{a_{j}\cdot f_{c}^{(j)}(x)\cdot\xi_{i+1-j}(x)} \right) =\\
            \sum_{j=0}^{i}{a_{j}\cdot\ddx\left( f_{c}^{(j)}(x)\cdot\xi_{i+1-j}(x) \right)} =
            \sum_{j=0}^{i}{a_{j}\cdot f_{c}^{(j+1)}(x)\cdot\xi_{i+1-j}(x)} + \sum_{j=0}^{i}{a_{j}\cdot f_{c}^{(j)}(x)\cdot\xi_{i+2-j}(x)}.
        \end{gather*}
        For the first sum, we denote $k=j+1$ and get
        \begin{equation*}
            \sum_{j=0}^{i}{a_{j}\cdot f_{c}^{(j+1)}(x)\cdot\xi_{i+1-j}(x)} = \sum_{k=1}^{i+1}{a_{k-1}\cdot f_{c}^{(k)}(x)\cdot\xi_{i+2-k}(x)},
        \end{equation*}
        so reverting back to $j$ we get
        \begin{gather*}
            \sum_{j=0}^{i}{f_{c}^{(j+1)}(x)\cdot\xi_{i+1-j}(x)} + \sum_{j=0}^{i}{f_{c}^{(j)}(x)\cdot\xi_{i+2-j}(x)} = \\
            \sum_{j=1}^{i+1}{a_{j-1}\cdot f_{c}^{(j)}(x)\cdot\xi_{i+2-j}(x)} + \sum_{j=0}^{i}{a_{j}\cdot f_{c}^{(j)}(x)\cdot\xi_{i+2-j}(x)} = \\
            a_{0}\cdot f_{c}^{(0)}(x)\cdot\xi_{i+2}(x) + \sum_{j=1}^{i}{a_{j}\cdot f_{c}^{(j)}(x)\cdot\xi_{i+2-j}(x)} +
            \sum_{j=1}^{i}{a_{j-1}\cdot f_{c}^{(j)}(x) \cdot \xi_{i+2-j}(x)} + a_{i}\cdot f_{c}^{(i+1)}(x) \cdot \xi_{1}(x)\\
            = a_{0}\cdot f_{c}^{(0)}(x) \cdot \xi_{i+2}(x) + \sum_{j=1}^{i}{\left( a_{j-1} + a_{j} \right) \cdot f_{c}^{(j)}(x) \cdot \xi_{i+2-j}(x)} +
            a_{i}\cdot f_{c}^{(i+1)}(x) \cdot \xi_{1}(x).
        \end{gather*}
        Denoting $c_{j} = a_{j-1} + a_{j}$ we get $c_{j} \in \mathbb{R}$ and
        \begin{gather*}
            \ddxk[i+1] g_{c}(x) = \sum_{j=0}^{i+1}{c_{j}\cdot f_{c}^{(j)}(x) \cdot \xi_{i+2-j}(x)}.\qedhere
        \end{gather*}
    \end{proof}
    \begin{lemma}\label{lem:fcs-derivatives}
        Let $x\in\torus[1]$, $\oyInZ$ and let $\xi_{i}(x)$, $f_c(x)$, $f_s(x)$ be as defined in~\cref{lem:derivative-of-gc-gs}. Furthermore:
        \begin{itemize}
            \item $\oyapm \coloneqq -\oy^{a}\text{ or } \oy^{a}$, where $a\in\mathbb{N}$.
            \item $\fcsk[k] \coloneqq \ddxk[k] \cosoyxi$ or $\ddxk[k]\sinoyxi$.
            \item $\gkma{m_l}{a_l} = \prod\limits_{j=1}^{k}{a_{l,j}\cdot\xi_{j}^{m_{l,j}}}$, where
            $a_{l,j}\in\mathbb{R}$ and $m_{l,j}\in\mathbb{N}$ as $1\leq j\leq k$.
        \end{itemize}
        Then there exists $1\leq s\in\mathbb{N}$ s.t.
        \begin{equation*}
            \ddxk[k]\fcs = \sum\limits_{l=1}^{s}{\oypm^{m_{l,0}}\cdot \fcs \cdot \gkma[k]{m_l}{a_l}}
        \end{equation*}
        where $m_{1,0}=\max\limits_{1\leq l\leq s}\left\{ m_{l,0} \right\}=k$ and
        $m_{1,1}=\max\limits_{1\leq l\leq s}\left\{ m_{l,j}\;\big|\; 1\leq j\leq k \right\}=k$.
    \end{lemma}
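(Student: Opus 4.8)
The plan is to prove the identity by induction on $k$, repeatedly applying the chain rule and the general Leibniz rule \cite{stewart2016calculus} to the claimed expansion and tracking how the exponent of $\omega_y$ and the exponents of the derivatives $\xii[j](x)$ propagate under differentiation. The base case $k=1$ is immediate: the chain rule gives
\[
    \ddx\cosoyxi = -\oy\,\sinoyxi\cdot\xii[1](x), \qquad \ddx\sinoyxi = \oy\,\cosoyxi\cdot\xii[1](x),
\]
each being a single summand of the required shape with $s=1$, $m_{1,0}=1$ and $\gkma[1]{m_1}{a_1}=\xii[1](x)$, so both $\max_l m_{l,0}$ and $\max_{l,j} m_{l,j}$ equal $1=k$.

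For the inductive step, assume the formula holds for some $k\geq 1$ and differentiate a generic summand $\oyapm[m_{l,0}]\cdot\fcs\cdot\gkma[k]{m_l}{a_l}$. By the product rule this splits into two types of contributions. Differentiating the trigonometric factor replaces $\fcs$ by $\pm\oy\, f_{s\vert c}(x)\,\xii[1](x)$, thereby raising the $\omega_y$-exponent by one and raising the $\xii[1](x)$-exponent inside the monomial by one; differentiating $\gkma[k]{m_l}{a_l}=\prod_{j=1}^{k}a_{l,j}\,\xii[j](x)^{m_{l,j}}$ (again by the product rule) replaces some factor $\xii[j](x)^{m_{l,j}}$ by $m_{l,j}\,\xii[j](x)^{m_{l,j}-1}\xii[j+1](x)$, which leaves both the $\omega_y$-exponent and the trigonometric factor unchanged and can introduce a factor $\xii[k+1](x)$. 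Collecting all terms and renaming the real constants yields an expansion of $\ddxk[k+1]\cosoyxi$ (and of the sine) of exactly the stated form, now with the monomial $\gkma[k+1]{m_l}{a_l}$ ranging over $j=1,\dots,k+1$.

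It remains to identify the two maxima, and this is where the bookkeeping must be done with care. The key observation is a monotonicity invariant along the ``differentiation history'' of each summand: a step that hits the trigonometric factor increases both the $\omega_y$-exponent and the $\xii[1](x)$-exponent by exactly one, while a step that hits a factor $\xii[j](x)$ leaves the $\omega_y$-exponent fixed, never increases the $\xii[1](x)$-exponent, and keeps the total degree $\sum_j m_{l,j}$ unchanged; in particular the total degree increases by at most one per step. Since we start from $\cosoyxi=\oyapm[0]\cdot f_c(x)\cdot 1$, after $k+1$ differentiations every summand satisfies $m_{l,0}\leq k+1$ and $\sum_j m_{l,j}\leq k+1$; moreover $m_{l,0}=k+1$ forces every step to have differentiated the trigonometric factor, which pins that summand down uniquely as $\pm\oyapm[k+1]\cdot f_{c\vert s}(x)\cdot\xii[1](x)^{k+1}$, for which $m_{1,1}=k+1$. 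Indexing this extremal summand by $l=1$ then gives $m_{1,0}=\max_l m_{l,0}=k+1$ and $m_{1,1}=\max_{l,j} m_{l,j}=k+1$, completing the induction.

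The main obstacle — really the only nonroutine point — is the combinatorial accounting in the last paragraph: one has to check that differentiating the polynomial part $\gkma[k]{m_l}{a_l}$ never secretly raises the power of $\omega_y$, and that the term carrying the top power of $\omega_y$ cannot be cancelled by another summand. Both follow from the monotonicity invariant, since that term is the unique summand with its $\omega_y$-exponent; everything else is a mechanical application of the chain and product rules.
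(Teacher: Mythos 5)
Your proposal is correct and follows essentially the same route as the paper: induction on $k$, with the inductive step splitting the derivative of each summand into the contribution from differentiating the trigonometric factor (which raises both the $\omega_y$-exponent and the $\xi_1$-exponent by one) and the contribution from differentiating the monomial $\gkma[k]{m_l}{a_l}$ (which shifts a power from $\xii[j]$ to $\xii[j+1]$), then isolating the unique extremal summand to identify both maxima as $k$. Your explicit ``monotonicity invariant'' is a slightly cleaner way of packaging the bookkeeping the paper carries out by direct computation of the leading term at the end of its proof.
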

    \begin{proof}
        First we see that
        \begin{equation}\label{eq:basis}
            \ddx\fcs = \oypm\cdot\fcs\cdot\xi_{1}
        \end{equation}
        Now we assume that for $k\in\mathbb{N}$ s.t. $k\leq d$ we have
        \begin{equation}\label{eq:assumption}
            \ddxk[k]\fcs = \sum\limits_{l=1}^{s}{\oypm^{m_{l,0}}\cdot \fcs \cdot \gkma[k]{m_l}{a_l}},
        \end{equation}
        where $m_{1,0}=\max\limits_{1\leq l\leq s}\left\{ m_{l,0} \right\}=k$ and
        $m_{1,1}=\max\limits_{1\leq l\leq s}\left\{ m_{l,j}\;\big|\; 1\leq j\leq k \right\}=k$.  Presently we show that there exists
        $s_1\in\mathbb{N}$ s.t.
        $\ddxk[k+1]\fcs = \sum\limits_{l=1}^{s_1}{\oypm^{m_{l,0}}\cdot \fcs \cdot \gkma[k]{m_l}{a_l}}$:
        \begin{align*}
            \ddxk[k+1]\fcs &= \ddx\left( \ddxk[k]\fcs \right) \overset{\cref{eq:assumption}}{=}
            \ddx\left( \sum\limits_{l=1}^{s}{\oypm^{m_{l,0}}\cdot \fcs \cdot \gkma[k]{m_l}{a_l}} \right) \\
            &=\sum\limits_{l=1}^{s}{\oypm^{m_{l,0}}\cdot\left( \ddx\left( \fcs \right)\cdot\gkma[k]{m_l}{a_l} +
            \fcs\cdot\ddx\left( \gkma{m_l}{a_l} \right) \right)}\\
            &=\sum\limits_{l=1}^{s}{\oypm^{m_{l,0}}\cdot\ddx\left( \fcs \right)\cdot\gkma[k]{m_l}{a_l}} +
            \sum\limits_{l=1}^{s}{\oypm^{m_{l,0}}\cdot\fcs\cdot\ddx\left( \gkma{m_l}{a_l} \right)}.
        \end{align*}
        Now, by denoting
        \begin{gather}
            \label{sig-1-def}
            \Sigma_{1}(x) \coloneq \sum\limits_{l=1}^{s}{\oypm^{m_{l,0}}\cdot\ddx\left( \fcs \right)\cdot\gkma[k]{m_l}{a_l}}\\
            \label{sig-2-def}
            \Sigma_{2}(x) \coloneq \sum\limits_{l=1}^{s}{\oypm^{m_{l,0}}\cdot\fcs\cdot\ddx\left( \gkma{m_l}{a_l} \right)}
        \end{gather}
        we have that $\ddxk[k+1]\fcs = \Sigma_{1}(x)+\Sigma_{2}(x)$.
        
        Next we analyze $\Sigma_{1}(x)$ and $\Sigma_{2}(x)$. First, by~\cref{eq:basis} we see that
        \begin{equation*}
            \Sigma_{1}(x) = \sum\limits_{l=1}^{s}{\oypm^{m_{l,0}}\cdot\fcs\cdot\xi_{1}(x)\cdot\gkma[k]{m_l}{a_l}}
            = \sum\limits_{l=1}^{s}{\oypm^{m_{l,0}}\cdot\fcs\cdot\xi_{1}(x) \cdot\prod\limits_{j=1}^{k}{a_{l,j}\cdot\xi_{j}^{m_{l,j}}(x)}}
        \end{equation*}
        and if we denote $\tilde{m}_{l,1} = m_{l,1}+1$ and $\forall_{2\leq j\leq k} \; \tilde{m}_{l,j}=m_{l,j}$ we have
        \begin{equation}\label{eq:sig-1-result}
            \Sigma_{1}(x) = \sum\limits_{l=1}^{s}{\oypm^{m_{l,0}}\cdot\fcs\cdot\xi_{1}\cdot\prod\limits_{j=1}^{k}{a_{l,j}\cdot\xi_{j}^{\tilde{m}_{l,j}}(x)}}=
            \sum\limits_{l=1}^{s}{\oypm^{m_{l,0}}\cdot\fcs\cdot\xi_{1}(x)\cdot\gkma[k]{\tilde{m}_{l}}{a_l}}.
        \end{equation}
        
        Moving on to $\Sigma_{2}(x)$, we analyze $\ddx\left( \gkma{m_l}{a_l} \right)$ using General Leibniz rule~\cite{olver1993applications}:
        \begin{align*}
            \ddx\left( \gkma{m_l}{a_l} \right) &= \ddx\left( \prod\limits_{j=1}^{k}{a_{l,j}\cdot\xi_{j}^{m_{l,j}}(x)} \right)\\
            &=\sum\limits_{t_1+t_2+\cdots + t_k = 1}\left( \prod\limits_{1\leq s\leq k}{\ddxk[t_s]\left( a_{l,s}\cdot\xi_{s}^{m_{l,s}}(x) \right)} \right).
        \end{align*}
        Since $t_s\in\mathbb{N}$ for each $1\leq s\leq k$ the equation $t_1+t_2+\cdots + t_k = 1$ implies that there are only $k$ possibilities, where for some $i=1,\dots,k$ we have 
        $t_i=1$ and for $s\neq i$ we have $t_s=0$. This leads to
        \begin{gather*}
            \sum\limits_{t_1+t_2+\cdots + t_k = 1}\left( \prod\limits_{1\leq s\leq k}{\ddxk[t_s]\left( a_{l,s}\cdot\xi_{s}^{m_{l,s}}(x) \right)} \right)= \\
            m_{l,1}\cdot\xi_{1}^{-1}(x)\cdot\xi_{2}(x)\cdot \prod\limits_{j=3}^{k}{a_{l,j}\cdot\xi_{j}^{m_{l,j}}(x)} +
            m_{l,2}\cdot\xi_{2}^{-1}(x)\cdot\xi_{3}(x)\cdot \prod\limits_{\overset{j=1}{j\neq 2,3}}^{k}{a_{l,j}\cdot\xi_{j}^{m_{l,j}}(x)} +\cdots \\
            + m_{l,k-1}\cdot\xi_{k-1}^{-1}(x) \cdot \xi_{k}(x) \cdot \prod\limits_{\overset{j=1}{j\neq k, k+1}}^{k}{a_{l,j}\cdot\xi_{j}^{m_{l,j}}(x)} +
            m_{l,k}\cdot\xi_{k}^{-1}(x)\cdot\xi_{k+1}(x) \cdot \prod\limits_{j=1}^{k}{a_{l,j}\cdot \xi_{j}^{m_{l,j}}(x)}.
        \end{gather*}
        In each of the first $k-1$ elements in the sum above the power of $\xi_{i}$ is reduced by $1$ and the power of $\xi_{i+1}$ is increased by $1$
        where $1\leq i\leq k-1$ and in the last element the only power of $\xi_{k}$ is reduced by $1$ and there's a new element in the multiplication
        which is $\xi_{k+1}$ with power and coefficient being both $1$z, so to continue we will denote the powers of $\xi_{i}$ in each
        element $1\leq j\leq k$ by $\tilde{m}_{l_{j},i}$.
        We also have a similar change in the coefficients $a_{l,j}$, where each element $j$ of the sum above is multiplied by the corresponding power of $\xi_{i}$
        so we denote the new coefficients by $\tilde{a}_{l_{j},i}$ and write:
        \begin{gather*}
            m_{l,1}\cdot\xi_{1}^{-1}(x)\cdot\xi_{2}(x)\cdot \prod\limits_{j=3}^{k}{a_{l,j}\cdot\xi_{j}^{m_{l,j}}(x)} +
            m_{l,2}\cdot\xi_{2}^{-1}(x)\cdot\xi_{3}(x)\cdot \prod\limits_{\overset{j=1}{j\neq 2,3}}^{k}{a_{l,j}\cdot\xi_{j}^{m_{l,j}}(x)} +\cdots +\\
            m_{l,k-1}\cdot\xi_{k-1}^{-1}(x)\cdot\xi_{k}(x)\cdot \prod\limits_{\overset{j=1}{j\neq k, k+1}}^{k}{a_{l,j}\cdot\xi_{j}^{m_{l,j}}(x)} +
            m_{l,k}\cdot\xi_{k}^{-1}(x)\cdot\xi_{k}(x)\cdot \prod\limits_{j=1}^{k}{a_{l,j}\cdot\xi_{j}^{m_{l,j}}(x)}=\\
            \prod\limits_{j=1}^{k}{\tilde{a}_{l_{1},j}\cdot\xi_{j}^{\tilde{m}_{l_{1},j}}(x)} +
            \prod\limits_{j=1}^{k}{\tilde{a}_{l_{2},j}\cdot\xi_{j}^{\tilde{m}_{l_{2},j}}(x)} + \cdots +
            \prod\limits_{j=1}^{k}{\tilde{a}_{l_{k-1},j}\cdot\xi_{j}^{\tilde{m}_{l_{k-1},j}}(x)} +
            \prod\limits_{j=1}^{k+1}{\tilde{a}_{l_{k},j}\cdot\xi_{j}^{\tilde{m}_{l_{k},j}}(x)} = \\
            \gkma{\tilde{a}_{l_{1},j}}{\tilde{m}_{l_{1},j}} + \gkma{\tilde{a}_{l_{2},j}}{\tilde{m}_{l_{2},j}} +\cdots +
            \gkma{\tilde{a}_{l_{k-1},j}}{\tilde{m}_{l_{k-1},j}} + \gkma[k+1]{\tilde{a}_{l_{k},j}}{\tilde{m}_{l_{k},j}} =\\
            \sum\limits_{i=1}^{k-1}{\gkma[k]{\tilde{a}_{l_{i},j}}{\tilde{m}_{l_{i},j}}} + \gkma[k+1]{\tilde{a}_{l_{k},j}}{\tilde{m}_{l_{k},j}}.
        \end{gather*}
        Going back to~\cref{sig-2-def} we conclude that
        \begin{equation}\label{eq:sig-2-result}
            \begin{split}
                \Sigma_{2}(x) &= \sum\limits_{l=1}^{s}{\oypm^{m_{l,0}}\cdot\fcs\cdot
                \left( \sum\limits_{i=1}^{k-1}{\gkma[k]{\tilde{a}_{l_{i},j}}{\tilde{m}_{l_{i},j}}} + \gkma[k+1]{\tilde{a}_{l_{k},j}}{\tilde{m}_{l_{k},j}} \right)}=\\
                &\sum\limits_{l=1}^{s}{\oypm^{m_{l,0}}\cdot\fcs\cdot\left( \sum\limits_{i=1}^{k-1}{\gkma[k]{\tilde{a}_{l_{i},j}}{\tilde{m}_{l_{i},j}}} \right)} +
                \sum\limits_{l=1}^{s}{\oypm^{m_{l,0}}\cdot\fcs\cdot\gkma[k+1]{\tilde{a}_{l_{k},j}}{\tilde{m}_{l_{k},j}}}=\\
                &\sum\limits_{l=1}^{s}{\sum\limits_{i=1}^{k-1}{\oypm^{m_{l,0}}\cdot\fcs\cdot\gkma[k]{\tilde{a}_{l_{i},j}}{\tilde{m}_{l_{i},j}}}} +
                \sum\limits_{l=1}^{s}{\oypm^{m_{l,0}}\cdot\fcs\cdot\gkma[k+1]{\tilde{a}_{l_{k},j}}{\tilde{m}_{l_{k},j}}}..
            \end{split}
        \end{equation}
        Finally, using \cref{eq:sig-1-result,eq:sig-2-result} we have:
        \begin{align*}
            \ddxk[k+1]\fcs &= \Sigma_{1}(x) + \Sigma_{2}(x)\\
            &=\sum\limits_{l=1}^{s}{\oypm^{m_{l,0}}\cdot\fcs\cdot\xi_{1}\cdot\gkma[k]{\tilde{m}_{l}}{a_l}}\\
            &\quad + \sum\limits_{l=1}^{s}{\sum\limits_{i=1}^{k-1}{\oypm^{m_{l,0}}\cdot\fcs\cdot\gkma[k]{\tilde{a}_{l_{i},j}}{\tilde{m}_{l_{i},j}}}}\\
            &\quad +\sum\limits_{l=1}^{s}{\oypm^{m_{l,0}}\cdot\fcs\cdot\gkma[k+1]{\tilde{a}_{l_{k},j}}{\tilde{m}_{l_{k},j}}}.
        \end{align*}
        So we can conclude that there exists $s_{1}\in\mathbb{N}$ s.t.:
        \begin{gather*}
            \ddxk[k+1]\fcs = \sum\limits_{l=1}^{s_1}{\oypm^{m_{l,0}}\cdot \fcs \cdot \gkma[k]{m_l}{a_l}}.
        \end{gather*}
        Since
        \begin{gather*}
            \ddxk[k+1]\fcs=\ddx\left( \ddxk[k]\fcs \right) =
            \ddx\left( \sum\limits_{l=1}^{s}{\oypm^{m_{l,0}}\cdot \fcs \cdot \gkma[k]{m_l}{a_l}} \right) =\\
            \ddx\left( \oypm^{m_{1,0}}\cdot \fcs \cdot \gkma[k]{m_l}{a_l} + \sum\limits_{l=2}^{s}{\oypm^{m_{l,0}}\cdot \fcs \cdot \gkma[k]{m_l}{a_l}} \right) =\\
            \ddx\left( \oypm^{m_{1,0}}\cdot \fcs \cdot \gkma[k]{m_l}{a_l} \right) +
            \ddx\left( \sum\limits_{l=2}^{s}{\oypm^{m_{l,0}}\cdot \fcs \cdot \gkma[k]{m_l}{a_l}} \right)=\\
            \oypm^{m_{1,0}}\cdot \fcs^{(1)} \cdot \gkma[k]{m_l}{a_l} + \oypm^{m_{1,0}}\cdot \fcs \cdot \ddx\left( \gkma[k]{m_l}{a_l} \right) +
            \ddx\left( \sum\limits_{l=2}^{s}{\oypm^{m_{l,0}}\cdot \fcs \cdot \gkma[k]{m_l}{a_l}} \right)=\\
            \oypm^{m_{1,0}+1} \cdot a_{1,1}\cdot\xi_{1}^{m_{1,1}+1}(x)\cdot \prod\limits_{j=2}^{k}{a_{l,j}\cdot\xi_{j}^{m_{l,j}}(x)} +
            \oypm^{m_{1,0}}\cdot \fcs \cdot \ddx\left( \gkma[k]{m_l}{a_l} \right) + \\
            \ddx\left( \sum\limits_{l=2}^{s}{\oypm^{m_{l,0}}\cdot \fcs \cdot \gkma[k]{m_l}{a_l}} \right),
        \end{gather*}
        adding~\cref{eq:assumption} we have $m_{1,0}+1=k+1$ and $m_{1,1}+1 = k+1$ which implies that the highest power of $\oypm$ in
        $\ddxk[k+1]\fcs$ is $k+1$ and the highest power of $\xi_{i}$ in $\ddxk[k+1]\fcs$ is also $k+1$.
    \end{proof}

    \begin{proof}[Proof of~\cref{psi-oy-properties}]
        First, since $F_1,F_2,\xi$ are smooth in $x$, we can differentiate under the integral sign and conclude that $\psiOy$ is smooth
        in $x$, with possibly a jump discontinuity at the endpoint $x=-\pi$.
        For an example of a situation with nonzero jump magnitudes, see \cref{case-for-discontinuity-for-psi} below.
        \newpar
        Now let $k\in\mathbb{N}$ s.t. $1\leq k \leq d$ and by using~\cref{derivative-of-psi} we get:
        \begin{equation}\label{eq:doy-derivative-of-psi}
            \ddxk[k]\psiOy(x)  = \frac{1}{2\pi} \left( \ik[k] + \sum_{l=0}^{k-1}{\ddxk[l]\left( e^{-\imath\oy\xi(x)}\xi^{(1)}(x)\Al[k-1-l] \right)} \right)
        \end{equation}
        where (see~\cref{integral-notations})
        \begin{equation*}
            I_{\oy, k}(x) \coloneqq \int_{-\pi}^{\xi(x)} e^{-\imath \oy y} \frac{\partial^{k+1}}{\partial x^{k+1}} F(x,y) \, dy +
            \int_{\xi(x)}^{\pi} e^{-\imath \oy y} \frac{\partial^{k+1}}{\partial x^{k+1}} F(x,y) \, dy
        \end{equation*}
        and since for each $x\in\torus[1]$ we have $\Fx\in\PC$ (recall \cref{description-of-F-and-xi}), this implies that
        $I_{\oy, k}$ is continuous in $(-\pi,\pi)$ with one singularity at most at $x=-\pi$.
        \newpar
        Now we proceed to analyze the sum on the right-hand side of~\cref{eq:doy-derivative-of-psi}.
        First we begin by denoting $\xii(x)\coloneq\ddxk[i]\xix$, $f_{c}(x)\coloneq\cos(\oy\xix)$, $f_{s}(x)\coloneq\sin(\oy\xix)$,
        $g_{c}(x)\coloneq f_{c}(x)\xi_{1}(x)$ and $g_{s}(x)\coloneq f_{s}(x)\xi_{1}(x)$ so that we can claim that
        \begin{align*}
            \sum_{l=0}^{k-1}{\ddxk[l]\left( e^{-\imath\oy\xix}\xi_{1}(x) A_{k-1-l}(x) \right)} &=
            \sum_{l=0}^{k-1}{\ddxk[l]\left( \left( \cos(\oy\xix)-\imath\sin(\oy\xix) \right)\xi_{1}(x) A_{k-1-l}(x) \right)}\\
            &=\sum_{l=0}^{k-1}{\ddxk[l]\left(\cos(\oy\xix)\xi_{1}(x) A_{k-1-l}(x) \right)}\\
            &\qquad -\imath\sum_{l=0}^{k-1}{\ddxk[l]\left( \imath\sin(\oy\xix)\xi_{1}(x) A_{k-1-l}(x) \right)}\\
            &=\sum_{l=0}^{k-1}{\ddxk[l]\left(g_{c}(x) A_{k-1-l}(x) \right)}-
            \imath\sum_{l=0}^{k-1}{\ddxk[l]\left( g_{s}(x) A_{k-1-l}(x) \right)}.
        \end{align*}
        By~\cref{eq:my_lebnitz} we claim that
        \begin{gather*}
            \sum_{l=0}^{k-1}{\ddxk[l]\left(g_{c}(x) A_{k-1-l}(x) \right)}-
            \imath\sum_{l=0}^{k-1}{\ddxk[l]\left( g_{s}(x) A_{k-1-l}(x) \right)} = \\
            \sum_{l=0}^{k-1}{\sum_{i=0}^{l}{a_{i} \cdot g_{c}^{(i)}(x) \cdot A_{k-1-l}^{(l-i)}}(x)} -
            \imath\sum_{l=0}^{k-1}{\sum_{i=0}^{l}{b_{i} \cdot g_{s}^{(i)}(x) \cdot A_{k-1-l}^{(l-i)}}(x)} = \\
            \sum_{l=0}^{k-1}{\sum_{i=0}^{l}{\left( a_{i} \cdot g_{c}^{(i)}(x) - i \cdot b_{i} \cdot g_{s}^{(i)}(x) \right)\cdot A_{k-1-l}^{(l-i)}}(x)}
        \end{gather*}
        and furthermore by~\cref{lem:derivative-of-gc-gs}
        \begin{gather*}
            \sum_{l=0}^{k-1}{\sum_{i=0}^{l}{\left( a_{i} \cdot g_{c}(x)^{(i)} - i \cdot b_{i} \cdot g_{s}^{(i)}(x) \right)\cdot A_{k-1-l}^{(l-i)}(x)}}=\\
            \sum_{l=0}^{k-1}{\sum_{i=0}^{l}{\sum_{j=0}^{i}{\left( a_{i} \cdot f_{c}^{(j)}(x) - i \cdot b_{i} \cdot f_{s}^{(j)}(x) \right)\cdot \xi_{i+1-j}(x)
            \cdot A_{k-1-l}^{(l-i)}(x)}}}.
        \end{gather*}
        Adding~\cref{lem:fcs-derivatives} we conclude that for $k\in\mathbb{N}$ each of the following derivatives $\fcsk[k]$ is a sum of
        the functions of the form
        \begin{equation}\label{eq:sum-of-fcs}
            a_{i}\cdot\oyapm[i]\cdot\fcs\cdot\xi_{1}^{m_1}(x) \cdots \xi_{k}^{m_n}(x)
        \end{equation}
        where $a_{i}\in\mathbb{R}$ and $m_{1},\dots,m_{n}\in\mathbb{N}$.
        Then by combining all of our conclusions above, the following sum
        \begin{gather}
            \sum_{l=0}^{k-1}{\ddxk[l]\left( e^{-\imath\oy\xi(x)}\xi^{(1)}(x)\Al[k-1-l] \right)} = \\
            \sum_{l=0}^{k-1}{\sum_{i=0}^{l}{\sum_{j=0}^{i}{\left( a_{i} \cdot f_{c}^{(j)}(x) - i \cdot b_{i} \cdot f_{s}^{(j)}(x) \right) \cdot \xi_{i+1-j}(x) \cdot
            A_{k-1-l}^{(l-i)}(x)}}}
        \end{gather}
        is seen to be a sum of functions described in~\cref{eq:sum-of-fcs} where the derivative order of $\xix$ is at most $\doy$ which is
        continuous in $(-\pi, \pi)$ with one singularity at most at $x=-\pi$. Then in turn, we have
        \begin{equation*}
            \ddxk[k]\psiOy(x)  = \frac{1}{2\pi} \left( \ik[k] + \sum_{l=0}^{k-1}{\ddxk[l]\left( e^{-\imath\oy\xi(x)}\xi^{(1)}(x)\Al[k-1-l] \right)} \right)
        \end{equation*}
        as a sum of continuous functions in $(-\pi,\pi)$ with one singularity at most at $x=-\pi$ for each $1\leq k \leq d$.
    \end{proof}

    \begin{proof}[Proof of \cref{prop:phi-oy-coefficients}]\label{proof-phi-oy-coefficients}
        The Bernoulli polynomials satisfy~\cite{sun2004identities}:
        \begin{gather*}
            \begin{cases}
                B_{n}(0) = B_{n}(1) &\text{if } n \neq 1 \\
                B_{n}(0) = -B_{n}(1) = -\frac{1}{2} &\text{if } n = 1
            \end{cases}, \qquad \text{for } n\in\mathbb{N}
        \end{gather*}
        implying that:
        \begin{gather}
            \label{eq:2.17}
            \begin{cases}
                V_{\psiOy,l}(-\pi) - V_{\psiOy,l}(\pi) = 1 &\text{if } l=0 \\
                V_{\psiOy,l}(-\pi) - V_{\psiOy,l}(\pi) = 0 &\text{if } 1 \leq l \leq\doy
            \end{cases}.
        \end{gather}
        Adding~\cref{lem:phi-oy-derivatives} we have:
        \begin{gather}
            \label{eq:2.18}
            \ddxk[m]\phiOy(x)|_{x=-\pi} - \ddxk[m]\phiOy(x)|_{x=\pi} =
            \begin{cases}
                \Aoylarg[m] &\text{if } 0 \leq m \leq \doy \\
                0 &\text{if } m > \doy
            \end{cases}
        \end{gather}
        so now we go back to $c_{k}(\phiOy) \coloneqq \normalizer\int_{-\pi}^{\pi} e^{-\imath kx} \phiOy(x)\,dx$ and use integration by parts
        with~\eqref{eq:2.18}:
        \begin{align*}
            2\pi c_{k}(\phiOy) &= \left[ \frac{e^{-\imath kx} \phiOy(x)}{-ik} \right]_{x=-\pi}^{x=\pi}+\frac{1}{\imath k}
            \int_{-\pi}^{\pi} e^{-\imath kx} \phi_{\oy}^{(1)}(x)\,dx \\
            &= \frac{(-1)^k \left( \phiOy(-\pi) - \phiOy(\pi) \right)}{\imath k}+\frac{1}{\imath k}\int_{-\pi}^{\pi} e^{-\imath kx}
            \phi_{\oy}^{(1)}(x)\,dx \\
            &= \frac{(-1)^k \Aoylarg[0]}{\imath k} + \frac{1}{\imath k}\int_{-\pi}^{\pi} e^{-\imath kx} \phi_{\oy}^{(1)}(x)\,dx
            = \ldots\\
            &\ldots = \sum_{l=0}^{d} \frac{(-1)^k \Aoyl}{(\imath k)^{l+1}}.
        \end{align*}
        As we defined $c_{0}(\phiOy) \equiv 0$, we conclude:
        \begin{gather*}
            c_{k}(\phiOy) =
            \begin{cases}
                0 &\text{if } k=0 \\
                \frac{\left( -1 \right)^k}{2\pi} \sum_{l=0}^{d_{\omega_{y}}} \frac{\Aoyl}{\left( \imath k \right)
                ^{\left( l+1 \right)}} &\text{otherwise }
            \end{cases}. \qedhere
        \end{gather*}
    \end{proof}

    \begin{lemma}
        \label{lem:phi-oy-derivatives}
        Let $\oyInZ$ and let $\phiOy(x)$ be as defined in~\cref{phi-omega-y} a pieceiwse polynomial of degree $\doy$. Then:
        \begin{gather}
            \label{eq:2.13}
            \ddxk[m] \phiOy(x) =
            \begin{cases}
                -\frac{\Aoylarg[m-1]}{2\pi} + \sum_{l=m}^{\doy} A_{l}(\oy) V_{\psi. l-m}(x) &\text{if } 1 \leq m \leq \doy \\
                -\frac{\Aoylarg[\doy]}{2\pi} &\text{if } m = \doy + 1 \\
                0 &\text{if } m > \doy + 1
            \end{cases}
        \end{gather}
    \end{lemma}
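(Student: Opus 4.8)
The plan is a direct induction on $m$, whose only substantive ingredient is the differentiation rule for Bernoulli polynomials. First I would record the elementary identities for the primitives $\Vpsi$ from~\cref{v-psi-def}. Since $B_{n}'(t) = n B_{n-1}(t)$ (see~\cite{olver2010nist}), a one-line computation gives
\[
    \ddx V_{l}(x;-\pi) = V_{l-1}(x;-\pi),\qquad l\geq 1,
\]
while for $l=0$ we have $V_{0}(x;-\pi) = \tfrac12 - \tfrac{x+\pi}{2\pi}$ on each interval of smoothness, so $\ddx V_{0}(x;-\pi) = -\tfrac{1}{2\pi}$ there. (The periodic extension of $V_{0}$ has a jump at $x=-\pi$, but \cref{lem:phi-oy-derivatives} only asserts the identity on the open interval, so this causes no difficulty.)

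For the base case $m=1$, I would differentiate the representation of $\phiOy$ in~\cref{phi-omega-y} term by term using these two identities:
\[
    \ddx\phiOy(x) = \Aoylarg[0]\Bigl(-\tfrac{1}{2\pi}\Bigr) + \sum_{l=1}^{\doy}\Aoyl\, V_{l-1}(x;-\pi) = -\frac{\Aoylarg[0]}{2\pi} + \sum_{l=1}^{\doy}\Aoyl\, V_{\psi,l-1}(x),
\]
which is exactly~\eqref{eq:2.13} for $m=1$.

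For the inductive step, assume~\eqref{eq:2.13} holds for some $m$ with $1\leq m\leq\doy$. Differentiating, the constant term $-\Aoylarg[m-1]/(2\pi)$ vanishes; in the remaining sum the $l=m$ summand contributes $\Aoylarg[m]\,\ddx V_{0}(x;-\pi) = -\Aoylarg[m]/(2\pi)$, and each summand with $l>m$ contributes $\Aoyl\, V_{l-m-1}(x;-\pi)$. Hence
\[
    \ddxk[m+1]\phiOy(x) = -\frac{\Aoylarg[m]}{2\pi} + \sum_{l=m+1}^{\doy}\Aoyl\, V_{\psi,l-(m+1)}(x),
\]
which is~\eqref{eq:2.13} for $m+1$ when $m+1\leq\doy$, and which reduces to $-\Aoylarg[\doy]/(2\pi)$ when $m+1=\doy+1$ (the sum then being empty). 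One further differentiation of this constant gives $0$, handling all $m>\doy+1$ and closing the induction.

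I do not anticipate a genuine obstacle here: the argument is purely a bookkeeping induction once the Bernoulli differentiation rule $B_n' = n B_{n-1}$ is in hand. The only point deserving a word of care is the one already noted --- that $\ddx V_{0}$ equals $-\tfrac1{2\pi}$ only in the classical (pointwise) sense away from the jump of the periodized sawtooth --- which is consistent with \cref{lem:phi-oy-derivatives} being stated pointwise on the smooth piece.
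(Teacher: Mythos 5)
Your proof is correct and rests on the same ingredient as the paper's, namely the Bernoulli differentiation rule $B_n' = nB_{n-1}$ applied term by term to the representation $\phiOy = \sum_{l} \Aoyl \Vpsi$; the paper merely computes the $m$-th derivative of each $\Vpsi$ in closed form and reindexes, whereas you organize the same computation as an induction on $m$. Your remark that the identity $\ddx V_0 = -\tfrac{1}{2\pi}$ holds pointwise away from the jump of the periodized sawtooth is a fair point of care that the paper leaves implicit.
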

    \begin{proof}
        First we remark that for Bernoulli polynomials, $B_{n}(x)$, we have $\ddx B_{n}(x) = n\cdot B_{n-1}(x)$ which in turn for $m\leq n$
        gives us:
        \begin{gather*}
            \ddxk[m] B_{l+1}(x) = \prod_{k=0}^{m-1} (l-k+1) \cdot B_{n-m}(x).
        \end{gather*}
        Therefore we have:
        \begin{gather}
            \label{eq:2.14}
            \ddxk[m] B_{l+1}\left( \frac{x+\pi}{2\pi} \right) =
            \begin{cases}
                \frac{\prod_{k=0}^{m-1} \left( l-k+1 \right) }{(2\pi)^{m}} \cdot B_{l-m+1}\left( \frac{x +\pi}{2\pi} \right) &\text{if }
                m\leq l+1\\
                0 &\text{if } m > l + 1
            \end{cases}
        \end{gather}
        Now we have:
        \begin{gather*}
            \ddxk[m] \Vpsi(x) = \ddxk[m] \left( -\frac{\left(2\pi\right)^l}{\left( l+1 \right)!}B_{l+1}\left( \frac{x+\pi}{2\pi} \right)
            \right)
            = -\frac{\left(2\pi\right)^l}{\left( l+1 \right)!} \ddxk[m] \left( B_{l+1}\left( \frac{x+\pi}{2\pi} \right) \right) = \\
            = -\frac{\left(2\pi\right)^l}{\left( l+1 \right)!}
            \begin{cases}
                \frac{\prod_{k=0}^{m-1} \left( l-k+1 \right) }{(2\pi)^{m}} \cdot B_{l-m+1}\left( \frac{x +\pi}{2\pi} \right) &\text{if }
                m < l+1\\
                -\normalizer &\text{if } m = l+1 \\
                0 &\text{if } m > l+1
            \end{cases} \\
            =\begin{cases}
                 \frac{\left( 2\pi \right)^{l-m}}{\left( l-m+1 \right)!} \cdot B_{l-m+1}\left( \frac{x +\pi}{2\pi} \right) &\text{if }
                 m < l+1\\
                 -\normalizer &\text{if } m = l+1 \\
                 0 &\text{if } m > l+1
            \end{cases}
            =\begin{cases}
                 V_{\psi, l-m}(x) &\text{if } m < l+1 \\
                 -\normalizer &\text{if } m = l+1 \\
                 0 &\text{if } m > l+1
            \end{cases}
        \end{gather*}
        \indent and if we denote $k=l-m$ then
        \begin{gather}
            \label{eq:derivative-of-v}
            \ddxk[m] \Vpsi(x) =
            \begin{cases}
                0 &\text{if } k<-1 \\
                -\normalizer &\text{if } k = -1 \\
                V_{\psi, k}(x) &\text{if } k > -1
            \end{cases}.
        \end{gather}
        Going back to $\ddxk[m] \phiOy(x)$ using~\eqref{eq:derivative-of-v}  and keeping in mind that $m\in\mathbb{N}$ we
         get:
        \begin{align*}
            \ddxk[m]\phiOy(x) &= \ddxk[m]\left( \sum_{l=0}^{\doy} \Aoyl\Vpsi(x) \right)= \sum_{l=0}^{\doy} \Aoyl\ddxk[m]\Vpsi(x)= \\
            &=\sum_{l=0}^{\doy} \Aoyl
            \begin{cases}
                V_{\psi, l-m}(x) &\text{if } m < l+1 \\
                -\normalizer &\text{if } m = l+1 \\
                0 &\text{if } m > l+1
            \end{cases} \\
            &=\sum_{k=-m}^{\doy - m} \Aoylarg[k+m]
            \begin{cases}
                V_{\psi, k}(x) &\text{if } k > -1 \\
                -\normalizer &\text{if } k = -1 \\
                0 &\text{if } k < -1
            \end{cases} \\
            &=\sum_{k=-1}^{\doy - m} \Aoylarg[k+m]
            \begin{cases}
                V_{\psi, k}(x) &\text{if } k > -1 \\
                -\normalizer &\text{if } k = -1
            \end{cases} \\
            &= -\frac{\Aoylarg[m-1](\oy)}{2\pi}+\sum_{k=0}^{\doy - m} \Aoylarg[k+m] V_{\psi, k}(x)
        \end{align*}
        Swtiching back from $k=l-m$ to $l = m+k$ we conclude the proof:
        \begin{gather*}
            \ddxk[m] \phiOy(x) =
            \begin{cases}
                -\frac{\Aoylarg[m-1]}{2\pi} + \sum_{l=m}^{\doy} \Aoyl V_{\psi. l-m}(x) &\text{if } 1 \leq m \leq \doy \\
                -\frac{\Aoylarg[\doy]}{2\pi} &\text{if } m = \doy + 1 \\
                0 &\text{if } m > \doy + 1
            \end{cases}. \qedhere
        \end{gather*}
    \end{proof}

\subsection{A bound on the derivatives}\label{subsec:proof-of-prop-15}
%
Here we prove a key technical estimate, used in the proof of \cref{main-result-for-coefficients-of-gamma-omega-y}.

\begin{proposition} \label{bound-for-gamma-omega-y-derivatives}
    Assume $\oyInZ$ and let $\gammaOy$ be as in~\cref{eq:decomposition-psi-omega-y} so that $\gammaOy\in C_{\mathbb{T}}^{d+1}$,
$d + 1 \in\mathbb{N}$ and let $k\leq d$, $x\in\torus[1]$.
Then exists constants $B_{F},\;A_x \geq 0$ (see~\cref{definition-of-Fx-and-jump-location-and-jump-magnitudes-and-assumptions})
and $\Admxi$ as defined in~\cref{Admxi} such that:
\begin{gather*}
        \abs{\ddxk[k+1]\left( \gammaOy \right)} \leq \left(2d + 5\right) \left(B_{F} + \Admxi A_x \abs{\oy}^d \right).
    \end{gather*}
\end{proposition}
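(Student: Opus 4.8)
The plan is to work from the decomposition $\psiOy = \gammaOy + \phiOy$ of~\cref{eq:decomposition-psi-omega-y}, write $\ddxk[k+1]\gammaOy = \ddxk[k+1]\psiOy - \ddxk[k+1]\phiOy$, and bound the two pieces separately; the explicit constant $2d+5$ in the statement will be the outcome of counting the summands that arise. For the first piece I would use the Leibniz-type identity for the $x$-derivatives of $\psiOy$ established in the proof of~\cref{psi-oy-properties} — differentiating it once more at the top order if $k=d$ — which represents $\ddxk[k+1]\psiOy(x)$ as $\frac{1}{2\pi}$ times the sum of an integral term $\ik[k+1]$ and a ``boundary'' term $\sum_{l=0}^{k}\ddxk[l]\bigl(e^{-\imath\oy\xi(x)}\xi^{(1)}(x)\Al[k-l]\bigr)$. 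The integral term $\ik[k+1]$ is the integral of $e^{-\imath\oy y}$ against a pure $x$-derivative of $F$ whose order is low enough that it is bounded by $B_F$ for every $k\le d$; since $\abs{e^{-\imath\oy y}}=1$ and the total length of integration is $2\pi$, this contributes at most $\frac{1}{2\pi}\abs{\ik[k+1]}\le B_F$.

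To handle the boundary term I would expand each $\ddxk[l]\bigl(e^{-\imath\oy\xi(x)}\xi^{(1)}(x)\Al[k-l]\bigr)$ by the general Leibniz rule and substitute the structural descriptions of~\cref{lem:derivative-of-gc-gs,lem:fcs-derivatives}. Every summand is then a real constant depending only on $d$, times $\pm\oy^{p}$ with $0\le p\le l\le d$, times $\cos(\oy\xi(x))$ or $\sin(\oy\xi(x))$, times a monomial in the derivatives of $\xi$ of orders at most $d+1$, times a derivative of $\Al[k-l]$ of order at most $d$. Bounding the trigonometric factor by $1$, the power of $\oy$ by $\abs{\oy}^{d}$, the $\xi$-derivatives by $\mxi$, and the factor built from $\Al[k-l]$ and its derivatives by $A_x$ up to a constant governed by the smoothness of $F$, and noting that the number of summands depends only on $d$, gives a bound of the form $\Admxi A_x\abs{\oy}^{d}$ with $\Admxi=\Admxi(d,\mxi)$ — this is precisely the constant of~\cref{Admxi}. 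Thus $\abs{\ddxk[k+1]\psiOy(x)}\le B_F+\Admxi A_x\abs{\oy}^{d}$.

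For the second piece I would invoke~\cref{lem:phi-oy-derivatives}, which writes $\ddxk[k+1]\phiOy(x)$ as $-\frac{\Aoylarg[k]}{2\pi}$ plus a linear combination of the periodic Bernoulli polynomials $V_{\psi,l-k-1}(x)$ for $l=k+1,\dots,d$ (and just $-\frac{\Aoylarg[d]}{2\pi}$ when $k=d$), with coefficients $\Aoylarg[l]$. Since these Bernoulli polynomials are uniformly bounded on $\torus[1]$, it suffices to bound $\abs{\Aoylarg[l]}$ for $l=0,\dots,d$; but by~\cref{psi-oy-jump-mag-def} each equals the one-sided difference $\psiOy^{(l)}(-\pi)-\psiOy^{(l)}(\pi)$, so evaluating the same Leibniz-type identity for $\psiOy^{(l)}$ at the two endpoints and reusing the estimates above yields $\abs{\Aoylarg[l]}\le c\,\bigl(B_F+\Admxi A_x\abs{\oy}^{d}\bigr)$ for a constant $c=c(d)$, after enlarging $\Admxi$ if necessary. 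Adding the contribution $B_F+\Admxi A_x\abs{\oy}^{d}$ coming from $\ddxk[k+1]\psiOy$ to the at most $d+1$ Bernoulli terms of $\ddxk[k+1]\phiOy$, each carrying a jump magnitude of that same order, and summing the coefficients carefully, produces the factor $2d+5$ and closes the argument.

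The step I expect to be the main obstacle is the bookkeeping that makes the constant explicit: one must identify exactly which pure and — through the chain rule inside the $\Al[m]$ factors — mixed derivatives of $F$ actually occur and check that $B_F$, $A_x$ and $\mxi$ jointly control all of them for every $k\le d$ (in particular that the $\xi$-derivatives never exceed the available order), one must count the summands generated by the iterated Leibniz expansions since that count fixes $2d+5$, and one must treat the endpoint $x=\pm\pi$ with care, both because $\psiOy^{(l)}$ there must be read as a one-sided limit and because the contribution of the integral term $\ik[l]$ to the jump has to be bounded. None of this is conceptually deep, but since the statement commits to the precise constant $2d+5$, the calculation has to be carried through without slack.
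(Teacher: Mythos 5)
Your proposal is correct and follows essentially the same route as the paper: decompose $\ddxk[k+1]\gammaOy = \ddxk[k+1]\psiOy - \ddxk[k+1]\phiOy$, bound the first term by $B_F + \Admxi A_x\abs{\oy}^d$ via the Leibniz identity for $\ddxk[k+1]\psiOy$ (integral term bounded by $B_F$, boundary sum by $\Admxi A_x\abs{\oy}^d$), and bound the second via the explicit Bernoulli representation of $\ddxk[k+1]\phiOy$ together with the endpoint evaluation giving $\abs{\Aoyl}\le 2\bigl(B_F+\Admxi A_x\abs{\oy}^d\bigr)$, which is exactly the paper's chain of~\cref{bound-for-derivative-of-psi-omega-y}, \cref{bound-on-jump-mag-of-psi-for-omega-y} and~\cref{bound-for-phi-omega-y-derivatives}. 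The only bookkeeping you leave implicit is that the paper bounds the $\phi$-part by $2(d+2)\bigl(B_F+\Admxi A_x\abs{\oy}^d\bigr)$ and adds the single $\psi$-contribution to reach $1+2(d+2)=2d+5$.
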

Before we prove~\cref{bound-for-gamma-omega-y-derivatives} we develop some auxiliary statements.

\begin{lemma}
    \label{bound-on-composition-exp-xi}
    Let $\xix$ be as described in~\cref{description-of-F-and-xi,definition-of-Fx-and-jump-location-and-jump-magnitudes-and-assumptions}
    and let $\oyInZ$, and  $k\in\mathbb{N}$ such that $k\leq d$. Then there  exists
    $c_k\in\mathbb{N}\setminus\left\{0\right\}$ such that:
    \begin{gather}
        \abs{\ddxk[k]\left( e^{-\imath\oy\xi(x)} \right)} \leq c_k\abs{\oy}^{k}M^{k^2}_{\xi}.
    \end{gather}
\end{lemma}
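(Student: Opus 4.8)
The plan is to reduce the estimate to the explicit expansion of the derivatives of $\cos(\oy\xi(x))$ and $\sin(\oy\xi(x))$ already established in \cref{lem:fcs-derivatives}. First, if $\oy=0$ the left-hand side is identically $0$ for every $k\geq 1$ and there is nothing to prove, so assume $\abs{\oy}\geq 1$. We also assume $\mxi\geq 1$; this costs nothing downstream, since the bound is only ever used as an upper estimate and enlarging $\mxi$ to $\max\{1,\mxi\}$ preserves all the conclusions in which it appears.

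Write $e^{-\imath\oy\xi(x)}=\cos\!\big(\oy\xi(x)\big)-\imath\sin\!\big(\oy\xi(x)\big)=f_c(x)-\imath f_s(x)$ in the notation of \cref{lem:derivative-of-gc-gs,lem:fcs-derivatives}, so that $\ddxk e^{-\imath\oy\xi(x)}=\ddxk f_c(x)-\imath\,\ddxk f_s(x)$. By \cref{lem:fcs-derivatives}, each of $\ddxk f_c$ and $\ddxk f_s$ is a finite sum of the form $\sum_{l=1}^{s}\oypm^{m_{l,0}}\,f_{c\vert s}(x)\,g_{k,m_l,a_l}(x)$, where $g_{k,m_l,a_l}(x)=\prod_{j=1}^{k}a_{l,j}\,\xi_j^{m_{l,j}}(x)$, and where the number of terms $s$, the nonnegative integers $m_{l,0},m_{l,1},\dots,m_{l,k}$ and the real coefficients $a_{l,j}$ depend only on $k$ — they are generated by the recursion in the proof of that lemma, which involves neither $\oy$ nor $\xi$ — with $m_{l,0}\leq k$ and $m_{l,j}\leq k$ for all $l$ and all $1\leq j\leq k$.

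It then suffices to bound each factor. Since $\oy\xi(x)\in\mathbb{R}$ we have $\abs{f_{c\vert s}(x)}\leq 1$; since $\abs{\oy}\geq 1$ and $m_{l,0}\leq k$ we have $\abs{\oypm^{m_{l,0}}}=\abs{\oy}^{m_{l,0}}\leq\abs{\oy}^{k}$; and since $1\leq j\leq k\leq d$ each $\abs{\xi_j(x)}=\abs{\xi^{(j)}(x)}\leq\mxi$, so, using $m_{l,j}\leq k$ for each of the $k$ indices $j$,
\[
    \big|g_{k,m_l,a_l}(x)\big|\;\leq\;\Big(\prod_{j=1}^{k}\abs{a_{l,j}}\Big)\prod_{j=1}^{k}\mxi^{m_{l,j}}\;=\;\Big(\prod_{j=1}^{k}\abs{a_{l,j}}\Big)\mxi^{\sum_{j=1}^{k}m_{l,j}}\;\leq\;\Big(\prod_{j=1}^{k}\abs{a_{l,j}}\Big)\mxi^{k^{2}}.
\]
Summing the at most $s$ terms of each expansion and adding the two contributions yields $\big|\ddxk e^{-\imath\oy\xi(x)}\big|\leq c_k\abs{\oy}^{k}\mxi^{k^{2}}$ for any $c_k\in\mathbb{N}\setminus\{0\}$ dominating the (finite, $k$-dependent) sum of the constants $\prod_{j}\abs{a_{l,j}}$ that occur in the two expansions.

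The only delicate point is bookkeeping rather than analysis: one has to note that the combinatorial data $s$ and $a_{l,j}$ supplied by \cref{lem:fcs-derivatives} are genuinely independent of $\oy$ and $\xi$, so that the resulting $c_k$ depends on $k$ alone, and that the crude estimate $\sum_{j=1}^{k}m_{l,j}\leq k\cdot k=k^{2}$ — which is all that the \emph{statement} of \cref{lem:fcs-derivatives} gives (its proof in fact yields $\sum_{j}m_{l,j}\leq k$) — already produces the exponent $k^{2}$ on $\mxi$ claimed in the lemma.
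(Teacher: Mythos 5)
Your proof is correct and follows essentially the same route as the paper's: split $e^{-\imath\oy\xi(x)}$ into $f_c-\imath f_s$, invoke \cref{lem:fcs-derivatives} for the structure of $\ddxk[k]f_{c\vert s}$, and bound each factor by $\abs{f_{c\vert s}}\leq 1$, $\abs{\oy}^{m_{l,0}}\leq\abs{\oy}^{k}$ and $\prod_j\abs{\xi_j}^{m_{l,j}}\leq M_\xi^{k^2}$, absorbing the combinatorial constants into $c_k$. You are in fact slightly more careful than the paper in flagging the harmless normalizations $\abs{\oy}\geq 1$ and $M_\xi\geq 1$ needed for the monotone exponent estimates.
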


 \begin{proof}
    First we notice that
    \begin{gather}\label{eioyxi-cos-sin}
        \begin{split}
            \abs{\ddxk[k]\left( e^{-\imath\oy\xi(x)} \right)} &= \abs{\ddxk[k]\left( \cosoyxi-
            \imath\sinoyxi \right)} \\
            &\leq \abs{\ddxk[k]\cosoyxi}+\abs{\ddxk[k]\sinoyxi}
        \end{split}
    \end{gather}
    and now we will look for a bound over $\abs{\cos(\oy\xix)}$ and $\abs{\sin(\oy\xix)}$.
    Before we begin the proof let us note that since we are only interested in finding a bound over the two
    above functions, we will allow ourself to avoid finding the explicit formula for $\ddxk[k]\cos(\oy\xix)$ and $\ddxk[k]\sin(\oy\xix)$,
    and because of the relation between $\ddx\cos(x)$ and $\ddx\sin(x)$ we will be using a more loose notations
    and assume that the bound over $\abs{\cos(\oy\xix)}$ will apply to $\ddxk[k]\sin(\oy\xix)$ as well.
    \newpar
    We begin borrowing specific notations from~\cref{lem:derivative-of-gc-gs}:
    \begin{itemize}
        \item $\xii(x) \coloneqq \ddxk[i] \xix$
        \item $\oyapm \coloneqq -\oy^{a}\text{ or } \oy^{a}$, where $a\in\mathbb{N}$
        \item $\fcsk[k] \coloneqq \ddxk[k]\left( \cosoyxi \right)$ or $\ddxk[k]\left( \sinoyxi \right)$
    \end{itemize}
    Before adding another notations we observe the result of~\cref{lem:fcs-derivatives}
    and conclude that for each derivative $\ddxk[l]\fcs$ we will have a sum of equations such as
    $\oyapm[i]\fcs\left( \xii[1]^{m_1}(x)\cdots\xii[k]^{m_k}(x) \right)$,
    but if we are interested in finding a bound for $\abs{\ddxk[s]\left( e^{-\imath\oy\xi(x)} \right)}$ we will evaluate the
    absolute value of every equation of the type presented above.
    So together with the assumption that for every $k\leq d_{\xi}$ we have a constant $M_{\xi}$ such that
    $\abs{\xix^{k}}\leq M_{\xi}$ and $\abs{\fcs}\leq 1$ we can claim that we can ignore the derivative order of $\xix$ and
    focus only on its power.
    Now we denote $\xi_{*}(x)\coloneqq \xi_{k}(x)$ for every $0\leq k\leq s$ and observe that
    \begin{gather*}
        \begin{split}
            \abs{\ddx\left( \xi_{(i)}(x) \right)^a} &= \abs{\ddx\left( \xi_{*}^a(x) \right)} \\
            &= \abs{a\left( \xi_{(i-1)}(x) \right)^{a-1}\xi_{(1)}(x)} = \abs{a\xi_{*}^{(a-1)}(x) \xi_{*}(x)} \\
            &= a\abs{\xi_{*}(x)}^a \leq aM_{\xi}^a
        \end{split}
    \end{gather*}
    Using~\cref{lem:fcs-derivatives} we see that there exists $s\in \mathbb{N}$ s.t.
    \begin{gather*}
        \ddxk[k]\fcs = \sum\limits_{l}^{s}{\oypm^{m_{l,0}}\cdot \fcs \cdot \gkma[k]{m_l}{a_l}}.
    \end{gather*}
    Now we can proceed:
    \begin{align*}
        \abs{\ddxk[k]\fcs} &= \abs{\sum\limits_{l=1}^{s}{\oypm^{m_{l,0}}\cdot \fcs \cdot \gkma[k]{m_l}{a_l}}}\\
        &=\abs{\sum\limits_{l=1}^{s}{\oypm^{m_{l,0}}\cdot \fcs \cdot
        \left( \prod\limits_{j=1}^{k}{a_{l,j}\cdot\xi_{j}^{m_{l,j}}(x)} \right)}} \\
        &\leq\sum\limits_{l=1}^{s}{\abs{\oy}^{m_{l,0}} \cdot \abs{\fcs} \cdot
        \abs{\prod\limits_{j=1}^{k}{a_{l,j}\cdot\xi_{j}^{m_{l,j}}(x)}}}\\
        &\overset{\abs{\fcs}\leq 1}{\leq}\sum\limits_{l=1}^{s}{\abs{\oy}^{m_{l,0}} \cdot
        \prod\limits_{j=1}^{k}{\abs{a_{l,j}}\cdot \abs{\xi_{j}(x)}^{m_{l,j}}}}\\
        &\leq\abs{\oy}^{k}\cdot\sum\limits_{l=1}^{s}{\prod\limits_{j=1}^{k}{\abs{a_{l,j}}\cdot \abs{\xi_{*}(x)}^{m_{l,j}}}}\\
        &\leq\abs{\oy}^{k}\cdot\sum\limits_{l=1}^{s}{\prod\limits_{j=1}^{k}{\abs{a_{l,j}}\cdot \abs{\xi_{*}(x)}^{k}}}\\
        &\overset{\abs{\xi_{*}(x)}^{m_{l,j}} \leq M_{\xi}^{k}}{\leq}\abs{\oy}^{k}\cdot s \cdot M_{\xi}^{k^2}\cdot
        \sum\limits_{l=1}^{s}{\prod\limits_{j=1}^{k}{\abs{a_{l,j}}}}.
    \end{align*}

    Denoting $c_{k}\coloneq 2\cdot s\cdot\sum\limits_{l=1}^{s}{\prod\limits_{j=1}^{k}{\abs{a_{l,j}}}}$ together
    with~\cref{eioyxi-cos-sin} implies
    \begin{gather*}
        \abs{\ddxk[k]\left( e^{-\imath\oy\xi(x)} \right)} \leq 2\cdot \abs{\ddxk[k]\fcs}
        \leq c_{k}\cdot\abs{\oy}^{k}\cdot M_{\xi}^{k^2}
    \end{gather*}
\end{proof}

\begin{lemma}
    \label{bound-for-xi-times-A}
    Let x $\in\torus[1]$ and $s\in\mathbb{N}$, take $A_{l}(x)$ as defined
    in~\cref{definition-of-Fx-and-jump-location-and-jump-magnitudes-and-assumptions}, then
    \begin{equation} \label{eq:bound-over-derivatives-xi-Al}
        \abs{\ddxk[s] \left( \xi^{(1)}(x) A_{s-l}(x) \right)} \leq  2^{s} M_{\xi} A_{x}.
    \end{equation}
\end{lemma}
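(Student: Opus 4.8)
The plan is to reduce the estimate to a single application of the general Leibniz rule followed by the crude uniform bounds that are already available. First I would write
\[
    \ddxk[s]\left( \xi^{(1)}(x)\, A_{s-l}(x) \right) = \sum_{i=0}^{s} \binom{s}{i}\, \xi^{(i+1)}(x)\, A_{s-l}^{(s-i)}(x),
\]
which is just the Leibniz product rule applied $s$ times to the product $\xi^{(1)} \cdot A_{s-l}$.

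Next I would bound each summand separately. By the definition of $M_{\xi}$ in~\cref{definition-of-Fx-and-jump-location-and-jump-magnitudes-and-assumptions} one has $\abs{\xi^{(i+1)}(x)} \leq M_{\xi}$ uniformly in $x \in \torus[1]$, and by the analogous uniform control on the jump magnitudes (and their derivatives) one has $\abs{A_{s-l}^{(s-i)}(x)} \leq A_{x}$. Substituting these two estimates into the Leibniz expansion and applying the triangle inequality gives
\[
    \abs{\ddxk[s]\left( \xi^{(1)}(x)\, A_{s-l}(x) \right)} \leq M_{\xi} A_{x} \sum_{i=0}^{s} \binom{s}{i} = 2^{s} M_{\xi} A_{x},
\]
using the binomial identity $\sum_{i=0}^{s}\binom{s}{i} = 2^{s}$. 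This is exactly the claimed inequality~\eqref{eq:bound-over-derivatives-xi-Al}.

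I do not expect any genuine obstacle: the whole argument is one use of Leibniz's rule plus the triangle inequality. The single point that needs a moment's attention is the index bookkeeping — one should check that the derivative orders that actually occur, namely $i+1$ (for $\xi$) and $s-i$ (for $A_{s-l}$) with $0 \leq i \leq s$, all lie within the ranges for which the uniform bounds $M_{\xi}$ and $A_{x}$ are declared; this holds in every place where the lemma is invoked, where $s \leq d$.
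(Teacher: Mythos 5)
Your proof is correct and is essentially identical to the paper's: both apply the general Leibniz rule to the product $\xi^{(1)}\cdot A_{s-l}$, bound each factor uniformly by $M_{\xi}$ and $A_{x}$ respectively, and sum the binomial coefficients to obtain the factor $2^{s}$. The remark about checking that the derivative orders stay within the range covered by the uniform bounds is a sensible (and valid) piece of bookkeeping that the paper leaves implicit.
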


\begin{proof}
    Using General Leibniz rule~\cite{olver1993applications} we have:
    \begin{gather*}
        \abs{\ddxk[s]\left( \xi^{(1)}(x) A_{s-l}(x) \right)} = \abs{\sum_{m=0}^{s}{\binom{s}{m} \left( \xi^{(1)}(x)
        \right)^{(s-m)} \left( A_{s-l}(x) \right)^{(m)}}}\\
        \leq \sum_{m=0}^{s}{\binom{s}{m}\abs{\left( \xi(x) \right)^{(s-m+1)}} \abs{\left( A_{s-l}(x)\right)^{(m)}}}
        \leq M_{\xi} A_{x}\sum_{m=0}^{s}{\binom{s}{m}} \leq 2^{s} M_{\xi} A_{x}. \qedhere
    \end{gather*}
\end{proof}

\noindent Now we denote
\begin{gather} \label{integral-notations}
    I_{\oy, k}(x) \coloneqq \int_{-\pi}^{\xi(x)} e^{-\imath \oy y} \frac{\partial^{k+1}}{\partial x^{k+1}} F(x,y) \, dy +
    \int_{\xi(x)}^{\pi} e^{-\imath \oy y} \frac{\partial^{k+1}}{\partial x^{k+1}} F(x,y) \, dy
\end{gather}

\noindent and formulate a useful result by applying Leibniz integral rule on $\ik$:
\begin{gather} \label{result-from-Leibnitz-general-rule}
    \ddx\left( \ik \right) = e^{-\imath\oy\xi(x)}\xi(x)^{(1)}\Al[k] + \ik[k+1].
\end{gather}

\begin{proposition} \label{bound-for-derivative-of-psi-omega-y}
    Let $x\in\torus[1]$, $\oyInZ$ and let $\psiOy$ as described in~\cref{def:psi-oy}, then exists $B_{F}\geq 0$ and $\Admxi$ (as defined in~\cref{Admxi})
    s.t. for every $k=0,1,\dots,d$ we have
    \begin{gather}
        \abs{\ddxk[k+1]\psiOy(x)} \leq B_{F} + \Admxi \cdot A_x \cdot \abs{\oy}^{d}
    \end{gather}
\end{proposition}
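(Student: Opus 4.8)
The plan is to differentiate the integral representation of $\psiOy$ from \cref{def-of-psi-omega-y} exactly $k+1$ times in $x$ and estimate the resulting terms one by one. Repeated application of the Leibniz integral rule — exactly as carried out in the proof of \cref{psi-oy-properties}, see \cref{eq:doy-derivative-of-psi} together with the recursion \cref{result-from-Leibnitz-general-rule} — expresses $\ddxk[k+1]\psiOy(x)$ as $\tfrac{1}{2\pi}$ times the sum of an ``integral remainder'' $\ik[k+1]$ of the form $\int_{-\pi}^{\xi(x)}e^{-\imath\oy y}\frac{\partial^{k+2}}{\partial x^{k+2}}F(x,y)\,dy+\int_{\xi(x)}^{\pi}e^{-\imath\oy y}\frac{\partial^{k+2}}{\partial x^{k+2}}F(x,y)\,dy$ (in the notation of \cref{integral-notations}), plus a finite sum of at most $d+1$ boundary contributions, each of the shape $\ddxk[l]\bigl(e^{-\imath\oy\xi(x)}\xi^{(1)}(x)\Al[m]\bigr)$ with $0\le l\le k\le d$ and some index $0\le m\le d$.

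The integral remainder is immediate: since $\lvert e^{-\imath\oy y}\rvert=1$, since $k+2\le d+2$ so that $\bigl\lvert\frac{\partial^{k+2}}{\partial x^{k+2}}F(x,y)\bigr\rvert\le B_F$ by the definition of $B_F$ in \cref{definition-of-Fx-and-jump-location-and-jump-magnitudes-and-assumptions}, and since the two intervals together have length $2\pi$, we get $\lvert\ik[k+1]\rvert\le 2\pi B_F$, hence $\tfrac{1}{2\pi}\lvert\ik[k+1]\rvert\le B_F$. This is the term that produces the $B_F$ summand in the claimed bound.

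For each boundary term I would peel off the factor $e^{-\imath\oy\xi(x)}$ via the general Leibniz rule,
\[
\ddxk[l]\bigl(e^{-\imath\oy\xi(x)}\cdot\xi^{(1)}(x)\Al[m]\bigr)=\sum_{i=0}^{l}\binom{l}{i}\,\ddxk[i]\bigl(e^{-\imath\oy\xi(x)}\bigr)\cdot\ddxk[l-i]\bigl(\xi^{(1)}(x)\Al[m]\bigr),
\]
and then invoke the two auxiliary estimates already proved: \cref{bound-on-composition-exp-xi} gives $\bigl\lvert\ddxk[i]\bigl(e^{-\imath\oy\xi(x)}\bigr)\bigr\rvert\le c_i\lvert\oy\rvert^{i}\mxi^{i^2}$, and \cref{bound-for-xi-times-A} gives $\bigl\lvert\ddxk[l-i]\bigl(\xi^{(1)}(x)\Al[m]\bigr)\bigr\rvert\le 2^{l-i}\mxi A_x$. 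Since $0\le i\le l\le k\le d$, for $\lvert\oy\rvert\ge1$ we have $\lvert\oy\rvert^{i}\le\lvert\oy\rvert^{d}$ and $\mxi^{i^2}\le\max\{1,\mxi^{d^{2}}\}$, so each boundary term is bounded by $\bigl(\sum_{i=0}^{l}\binom{l}{i}c_i2^{l-i}\bigr)\max\{1,\mxi^{d^{2}}\}\,\mxi\,A_x\,\lvert\oy\rvert^{d}$. Summing the $\le d+1$ boundary terms and dividing by $2\pi$ produces a bound of the form $\Admxi A_x\lvert\oy\rvert^{d}$, where $\Admxi=\Admxi(d,\mxi)$ is the resulting constant (this is the quantity fixed in \cref{Admxi}); adding the $B_F$ contributed by the integral remainder yields $\bigl\lvert\ddxk[k+1]\psiOy(x)\bigr\rvert\le B_F+\Admxi A_x\lvert\oy\rvert^{d}$, as required.

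The only genuinely delicate point is the bookkeeping in the first step — checking that differentiating the two moving-endpoint integrals $k+1$ times really does leave only terms of the two advertised shapes, with the endpoint jumps of the various $x$-derivatives of $F$ reorganized into the quantities $\Al[m]$ (and their $x$-derivatives), which in turn relies on the regularity of $\xi$, $F_1$, $F_2$ and on the uniform bounds encoded in $B_F$, $A_x$, $\mxi$. This is exactly the structural computation already done in the proof of \cref{psi-oy-properties} through \cref{lem:derivative-of-gc-gs} and \cref{lem:fcs-derivatives}, so for the present proposition it suffices to quote that decomposition and feed the two auxiliary lemmas into it; no estimate beyond \cref{bound-on-composition-exp-xi} and \cref{bound-for-xi-times-A} is needed.
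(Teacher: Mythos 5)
Your proposal is correct and follows essentially the same route as the paper: repeated application of the Leibniz integral rule to obtain the decomposition $\ddxk[k+1]\psiOy = \frac{1}{2\pi}\bigl(\ik[k+1] + \sum_{l=0}^{k}\ddxk[l]\bigl(e^{-\imath\oy\xi(x)}\xi^{(1)}(x)A_{k-l}(x)\bigr)\bigr)$, the bound $\abs{\ik[k+1]}\leq 2\pi B_F$, and the product Leibniz rule combined with \cref{bound-on-composition-exp-xi} and \cref{bound-for-xi-times-A} for the boundary terms. The only (immaterial) difference is in the final constant bookkeeping — you absorb the powers of $\mxi$ into $\max\{1,\mxi^{d^2}\}$ while the paper tracks them via binomial identities to arrive at the specific value $\Admxi=\frac{4^d\mathcal{C}_d\mxi^{2d+2}}{\pi}$ — and both arguments share the same tacit restriction to $\abs{\oy}\geq 1$ in the step $\abs{\oy}^{i}\leq\abs{\oy}^{d}$.
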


\begin{proof}
    \begin{align*}
        2\pi\ddxk[k+1]\psiOy(x) &= \ddxk[k+1] \left( \ik[0]  \right) = \ddxk[k] \left( e^{-\imath\oy\xi(x)}\xi^{(1)}(x)\Al[0] + \ik[1] \right) \\
        &= \ddxk[k] \left( \ik[1] \right) + \ddxk[k] \left( e^{-\imath\oy\xi(x)}\xi^{(1)}(x)\Al[0] \right) \\
        &= \ddxk[k-1] \left(  e^{-\imath\oy\xi(x)}\xi^{(1)}(x)\Al[1] + \ik[2] \right) + \ddxk[k] \left( e^{-\imath\oy\xi(x)}\xi^{(1)}(x)\Al[0] \right) \\
        &= \ddxk[k-1] \left( \ik[2] \right) + \sum_{l=k-1}^{k}  \left( e^{-\imath\oy\xi(x)}\xi^{(1)}(x)\Al[k-l] \right) \\
        &= \ddxk[k-2] \left( \ik[3] \right) + \sum_{l=k-2}^{k}  \left( e^{-\imath\oy\xi(x)}\xi^{(1)}(x)\Al[k-l] \right) \\
        &= \ddxk[k-3]\left( \ik[4] \right) + \sum_{l=k-3}^{k}{\ddxk[l]\left( e^{-\imath\oy\xi(x)}\xi^{(1)}(x)\Al[k-l] \right)}=\ldots \\
        &= \ddxk[k-j]\left( \ik[j+1] \right) + \sum_{l=k-j}^{k}{\ddxk[l]\left( e^{-\imath\oy\xi(x)}\xi^{(1)}(x)\Al[k-l] \right)}=\ldots \\
        &= \ik[k+1] + \sum_{l=0}^{k}{\ddxk[l]\left( e^{-\imath\oy\xi(x)}\xi^{(1)}(x)\Al[k-l] \right)}.
    \end{align*}

    Therefore,
    \begin{gather}
        \label{derivative-of-psi}
        \ddxk[k+1]\psiOy(x)  = \frac{1}{2\pi} \left( \ik[k+1] + \sum_{l=0}^{k}{\ddxk[l]\left( e^{-\imath\oy\xi(x)}\xi^{(1)}(x)\Al[k-l] \right)} \right).
    \end{gather}

    Proceeding further, we have:
    \begin{align*}
        &\abs{\ddxk[k+1]  \left(\psiOy(x) \right)} = \frac{1}{2\pi} \abs{\left( \ik[k+1] + \sum_{l=0}^{k}{\ddxk[l]
        \left( e^{-\imath\oy\xi(x)}\xi^{(1)}(x)\Al[k-l] \right)} \right)} \\
        &\leq\normalizer\left( \abs{\ik[k+1]} + \abs{\sum_{l=0}^{k}{\ddxk[l]\left( e^{-\imath\oy\xi(x)}\xi^{(1)}(x)\Al[k-l] \right)}} \right) \\
        &\leq\normalizer\left( \abs{\ik[k+1]} + \sum_{l=0}^{k}{\abs{\ddxk[l]\left( e^{-\imath\oy\xi(x)}\xi^{(1)}(x)\Al[k-l] \right)}} \right).
    \end{align*}

    \noindent By applying Leibniz General rule~\cite{olver1993applications} on the sum above we further have
    \begin{gather*}
        \abs{\ddxk[k+1]  \left(\psiOy(x) \right)} \leq \normalizer\abs{\ik[k+1]} + \normalizer\sum_{l=0}^{k} \sum_{m=0}^{l} \binom{l}{m} \abs{\ddxk[l-m]\left( e^{-\imath\oy\xi(x)} \right)}
        \abs{\ddxk[m]\left( \xi^{(1)}(x) A_{k-l}(x) \right)}
    \end{gather*}
    \noindent and now using~\cref{bound-on-composition-exp-xi,bound-for-xi-times-A}
    and denoting $\mathcal{C}_{d}\coloneqq \max\limits_{0\leq k\leq d} \left\{ \abs{c_k} \right\}$, where $c_{k}$ is defined in~\cref{bound-on-composition-exp-xi}
    we get:
    \begin{align*}
        \abs{\ddxk[k+1]  \left(\psiOy(x) \right)}&\leq\normalizer\abs{\ik[k+1]} +
        \frac{A_{x}M_{\xi}}{2\pi}\sum_{l=0}^{k} \sum_{m=0}^{l} \binom{l}{m} \mathcal{C}_{d} 2^{m}\abs{\oy}^{l-m}M_{\xi}^{(l-m)^2}\\
        &\leq \normalizer\abs{\ik[k+1]} +
        \frac{A_{x} \mathcal{C}_{d} M_{\xi}}{2\pi} \sum_{l=0}^{k} 2^{l} M_{\xi}^{l} \sum_{m=0}^{l}
        \binom{l}{m} \left( \abs{\oy}M_{\xi} \right)^{l-m}
    \end{align*}

    Using the known identities
    \begin{gather*}
        \sum_{k=0}^{n}{\binom{n}{k}a^{n-k}} = \left( 1 + a \right)^{n},\quad a\in\mathbb{R}\setminus
        \left\{ 0 \right\} \\
        \sum_{k=0}^{n} 2^{k} \left( 1 + a \right)^{k} = \frac{2^{n+1} \left( 1 + a \right)^{n+1} - 1}{2a+1}
    \end{gather*}
    we proceed to obtain:
    \begin{align*}
        \abs{\ddxk[k+1]  \left(\psiOy(x) \right)}&\leq \normalizer\abs{\ik[k+1]} +
        \frac{A_{x} \mathcal{C}_{d} M_{\xi}}{2\pi} \sum_{l=0}^{k} \left( 2M_{\xi} \right)^{l} \left( 1+ \abs{\oy}M_{\xi} \right)^l\\
        &= \normalizer\abs{\ik[k+1]} +
        \frac{A_{x} \mathcal{C}_{d} M_{\xi}}{2\pi} \frac{\left( 2M_{\xi} \right)^{k+1}(1 + \abs{\oy}M_{\xi})^{k+1} - 1}{2\abs{\oy}M_{\xi} + 1}\\
        &\leq \normalizer\abs{\ik[k+1]} +
        \frac{A_{x} \mathcal{C}_{d} M_{\xi}^{k+2}}{2\pi} \frac{2^{k+1}(1 + \abs{\oy}M_{\xi})^{k+1}}{1+\abs{\oy}M_{\xi}}\\
        &= \normalizer\abs{\ik[k+1]} +
        \frac{A_{x} \mathcal{C}_{d} M_{\xi}^{k+2}}{\pi} 2^{k}(1 + \abs{\oy}M_{\xi})^{k}\\
        &\leq \normalizer\abs{\ik[k+1]} +
        \frac{A_{x} \mathcal{C}_{d} M_{\xi}^{k+2}}{\pi} 2^{k}(2 \abs{\oy}M_{\xi})^{k}\\
        &= \normalizer\abs{\ik[k+1]} +
        \frac{A_{x} \mathcal{C}_{d}}{\pi} 4^{k} M_{\xi}^{2k+2} \abs{\oy}^{k}.
    \end{align*}

    From~\cref{definition-of-Fx-and-jump-location-and-jump-magnitudes-and-assumptions} we have a constant $B_F$ s.t.:
    \begin{gather} \label{bf-bound}
        \abs{\frac{\partial^{k+1}}{\partial x^{k+1}}\left( F(x,y) \right)} \leq B_{F}
    \end{gather}
    which allows us to continue:
    \begin{gather*}
        \abs{I_{\oy, k+1}} \leq \int_{-\pi}^{\xi(x)} \abs{e^{-\imath y \oy}} \abs{\frac{\partial^{k+1}}{\partial x^{k+1}}F(x,y)} \, dy +
        \int_{\xi(x)}^{\pi} \abs{e^{-\imath y \oy}} \abs{\frac{\partial^{k+1}}{\partial x^{k+1}}F(x,y)} \, dy
    \end{gather*}
    \begin{gather*}
        \leq \int_{-\pi}^{\xi(x)} \abs{\frac{\partial^{k+1}}{\partial x^{k+1}}F(x,y)} \, dy +
        \int_{\xi(x)}^{\pi} \abs{\frac{\partial^{k+1}}{\partial x^{k+1}}F(x,y)} \, dy
        \leq \int_{-\pi}^{\xi(x)} B_{F} \, dy + \int_{\xi(x)}^{\pi} B_{F} \, dy = 2\pi B_{F}.
    \end{gather*}
    Denoting
    \begin{gather} \label{Admxi}
         \Admxi \coloneqq \frac{4^{d} \mathcal{C}_d M_{\xi}^{2d+2} }{\pi}
    \end{gather}
    we finally have
    \begin{align*}
        \abs{\ddxk[k+1]\left( \psiOy(x) \right)} &\leq \frac{2\pi B_{F}}{2\pi} +
        \frac{A_{x} \mathcal{C}_{d}}{\pi} 4^{k} M_{\xi}^{2k+2} \abs{\oy}^{k}\\
        &= B_{F} + \frac{A_{x} \mathcal{C}_{d}}{\pi} 4^{k} M_{\xi}^{2k+2} \abs{\oy}^{k} \\
        &\overset{k\leq d}{\leq} B_{F} + \frac{A_{x} \mathcal{C}_{d}}{\pi} 4^{d} M_{\xi}^{2d+2} \abs{\oy}^{d} \\
        &= B_{F} + \Admxi \cdot A_x \cdot \abs{\oy}^{d}.\qedhere
    \end{align*}
\end{proof}

Before presenting~\cref{bound-for-phi-omega-y-derivatives} we need an additional technical result.

\begin{lemma}\label{bound-on-jump-mag-of-psi-for-omega-y}
    Let $\oyInZ$ and $\psiOy$ as in~\cref{def:psi-oy}.
    Then there exist constants $B_F$ (as in~\cref{definition-of-Fx-and-jump-location-and-jump-magnitudes-and-assumptions}) and $\Admxi$ such that:
    \begin{gather}
        \abs{\Aoyl} \leq 2 B_F + 2\Admxi A_x \abs{\oy}^d.
    \end{gather}
\end{lemma}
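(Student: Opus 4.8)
The plan is to bound each jump magnitude $\Aoyl = \psiOy^{(l)}(-\pi)-\psiOy^{(l)}(\pi)$, for $l=0,\ldots,d$, by reducing to the pointwise derivative estimate already proved in~\cref{bound-for-derivative-of-psi-omega-y}. First I would invoke~\cref{psi-oy-properties}: since $\psiOy\in\PCoy$ and $\psiOy$ together with its first $d$ derivatives has a single jump at $x=-\pi$, each $\psiOy^{(l)}$ with $l\le d$ is continuous (indeed smooth) on the open interval $(-\pi,\pi)$, while $\psiOy^{(l+1)}$ is at least piecewise continuous there, hence integrable. Thus the one-sided limits $\psiOy^{(l)}(-\pi^{+})$ and $\psiOy^{(l)}(\pi^{-})$ exist, and the fundamental theorem of calculus gives
\begin{gather*}
    \Aoyl \;=\; \psiOy^{(l)}(-\pi^{+}) - \psiOy^{(l)}(\pi^{-}) \;=\; -\int_{-\pi}^{\pi}\psiOy^{(l+1)}(x)\,dx,\qquad l=0,\ldots,d .
\end{gather*}

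Taking absolute values yields $\abs{\Aoyl}\le 2\pi\,\sup_{x\in(-\pi,\pi)}\abs{\psiOy^{(l+1)}(x)}$, and then applying~\cref{bound-for-derivative-of-psi-omega-y} with index $k=l$ (admissible precisely because $0\le l\le d$) gives $\sup_x\abs{\psiOy^{(l+1)}(x)}\le B_F+\Admxi A_x\abs{\oy}^{d}$. Chaining the two estimates produces the asserted bound $\abs{\Aoyl}\le 2B_F+2\Admxi A_x\abs{\oy}^{d}$; the extra factor $\pi$ coming from the length of $(-\pi,\pi)$ is harmless and can be absorbed into $\Admxi$, or avoided for $l\ge 1$ by the cruder route $\abs{\Aoyl}\le\abs{\psiOy^{(l)}(-\pi^{+})}+\abs{\psiOy^{(l)}(\pi^{-})}\le 2\sup_x\abs{\psiOy^{(l)}(x)}$ combined with~\cref{bound-for-derivative-of-psi-omega-y} at index $k=l-1$.

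The only step needing slightly more care is the base case $l=0$, because~\cref{bound-for-derivative-of-psi-omega-y} controls only derivatives of order at least one. I would handle it either by keeping the integral identity above (which works verbatim and gives $\abs{A_{0}^{\psi}(\oy)}\le 2\pi\sup_x\abs{\psiOy'(x)}\le 2\pi(B_F+\Admxi A_x\abs{\oy}^{d})$), or by the elementary estimate $\abs{\psiOy(x)}=\bigl|\widehat{\Fx}(\oy)\bigr|\le\frac{1}{2\pi}\int_{-\pi}^{\pi}\abs{F(x,y)}\,dy\le\sup_{x,y}\abs{F(x,y)}\le B_F$, giving $\abs{A_{0}^{\psi}(\oy)}\le 2B_F$ directly. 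I do not anticipate any genuine obstacle: the whole argument rests on~\cref{psi-oy-properties} (so that the jump is literally a difference of one-sided limits of a function that is otherwise regular on $(-\pi,\pi)$) and on~\cref{bound-for-derivative-of-psi-omega-y} (the already-established pointwise derivative bound), with only routine bookkeeping of the constants $B_F$, $\Admxi$, $A_x$, which are exactly those appearing in~\cref{bound-for-derivative-of-psi-omega-y}.
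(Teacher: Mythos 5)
Your ``cruder route'' for $l\ge 1$ --- bounding $\abs{\Aoyl}$ by the sum of the absolute values of the two boundary values and invoking \cref{bound-for-derivative-of-psi-omega-y} at index $k=l-1$ --- is exactly the paper's proof, so the proposal is correct and takes essentially the same approach. Your extra care with the case $l=0$ is in fact warranted, since \cref{bound-for-derivative-of-psi-omega-y} only controls derivatives of order at least one and the paper's own proof silently applies it at $l=0$ as well; your FTC variant (at the cost of a factor $\pi$) patches this, but note that your alternative bound $\sup_{x,y}\abs{F(x,y)}\le B_F$ is not literally justified by the definition in \cref{definition-of-Fx-and-jump-location-and-jump-magnitudes-and-assumptions}, which bounds only $x$-derivatives of $F$ of order at least one.
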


\begin{proof}
    From~\cref{psi-oy-jump-mag-def} we get:
    \begin{gather*}
        \abs{\Aoyl} = \abs{\psiOy^{(l)}(x)\Big|_{x=-\pi} - \psiOy^{(l)}(x)\Big|_{x=\pi}} \leq \abs{\left( \psiOy^{(l)}(x)\Big|_{x=-\pi}
        \right)} + \abs{\left( \psiOy^{(l)}(x)\Big|_{x=\pi} \right)}.
    \end{gather*}

    Using~\cref{bound-for-derivative-of-psi-omega-y} and~\cref{Admxi}  we have:
    \begin{align*}
        \abs{\Aoyl} &\leq 2\Big( B_F + \frac{A_x\mathcal{C}_d}{\pi} 4^d M_{\xi}^{2d+2}\abs{\oy}^d \Big) \\
        &= 2\Big( B_{F} + \Admxi A_x \abs{\oy}^d \Big),
    \end{align*}
    completing the proof.
\end{proof}

\begin{proposition}
    \label{bound-for-phi-omega-y-derivatives}
    Let $\omega\in\mathbb{Z}$ and $\phiOy$ as defined in~\cref{phi-omega-y} and $k\in\mathbb{N}$, then
    \begin{gather}
        \abs{\dkdxk\phiOy(x)} \leq 2\left( d+2 \right) \Big( B_{F} + \Admxi A_x \abs{\oy}^d \Big),
    \end{gather}
    where $\Admxi$ is defined in~\cref{Admxi}
\end{proposition}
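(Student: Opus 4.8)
The plan is to differentiate $\phiOy$ in closed form using \cref{lem:phi-oy-derivatives}, to bound each resulting periodic-Bernoulli factor by $1$ uniformly in the derivative order, and then to control the jump magnitudes $\Aoyl$ via \cref{bound-on-jump-mag-of-psi-for-omega-y}.

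First I would record, from \cref{lem:phi-oy-derivatives} together with the defining identity $\phiOy=\sum_{l=0}^{d}\Aoyl\Vpsi$ (recall \cref{phi-omega-y,v-psi-def}), that for every $k$ the function $\dkdxk\phiOy(x)$ is a finite linear combination of the jump magnitudes $\{\Aoyl\}_{l=0}^{d}$: for $1\le k\le d$ it equals $-\tfrac1{2\pi}\Aoylarg[k-1]+\sum_{l=k}^{d}\Aoyl\, V_{\psi,l-k}(x)$; for $k=0$ it is $\sum_{l=0}^{d}\Aoyl\Vpsi(x)$; for $k=d+1$ it is $-\tfrac1{2\pi}\Aoylarg[d]$; and for $k>d+1$ it vanishes. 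In each case there are at most $d+2$ summands, and each coefficient is either $\tfrac1{2\pi}$ or a value $V_{\psi,j}(x)$ with $0\le j\le d$.

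Second --- the only genuinely non-routine point --- I would prove the uniform bound $\abs{V_{\psi,j}(x)}\le 1$ for all $j\ge 0$ and all $x$. For $j=0$ this is immediate, since $V_{\psi,0}(x)=-B_1\!\big(\tfrac{x+\pi}{2\pi}\big)$ and $\abs{B_1(t)}\le\tfrac12$ on $[0,1)$. For $j\ge 1$ a crude estimate fails because the prefactor $(2\pi)^j/(j+1)!$ already exceeds $1$ for several small $j$; instead I would use the Fourier series of the periodically extended Bernoulli function, $B_{j+1}(\{t\})=-\tfrac{(j+1)!}{(2\pi\imath)^{j+1}}\sum_{n\ne 0}n^{-(j+1)}e^{2\pi\imath n t}$, which yields $\abs{B_{j+1}(\{t\})}\le \tfrac{(j+1)!}{(2\pi)^{j+1}}\,2\zeta(j+1)$. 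Since $V_{\psi,j}(x)=-\tfrac{(2\pi)^j}{(j+1)!}B_{j+1}\!\big(\{\tfrac{x+\pi}{2\pi}\}\big)$, the factorials cancel and $\abs{V_{\psi,j}(x)}\le \tfrac{\zeta(j+1)}{\pi}\le \tfrac{\zeta(2)}{\pi}=\tfrac{\pi}{6}<1$. Together with $\tfrac1{2\pi}<1$, every coefficient appearing in the first step has modulus at most $1$.

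Third, I would invoke \cref{bound-on-jump-mag-of-psi-for-omega-y}, which gives $\abs{\Aoyl}\le 2B_F+2\Admxi A_x\abs{\oy}^d=2\big(B_F+\Admxi A_x\abs{\oy}^d\big)$ for each $l$, and combine: a sum of at most $d+2$ terms, each of modulus at most $2\big(B_F+\Admxi A_x\abs{\oy}^d\big)$, gives $\abs{\dkdxk\phiOy(x)}\le 2(d+2)\big(B_F+\Admxi A_x\abs{\oy}^d\big)$, which is the assertion. The main obstacle is the second step: absent the cancellation between $(2\pi)^j/(j+1)!$ and the size of the Bernoulli function one would pick up an extra $d$-dependent factor, so it is essential to estimate $V_{\psi,j}$ from its Fourier coefficients rather than by a triangle inequality on the polynomial coefficients.
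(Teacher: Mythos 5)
Your proof is correct and follows essentially the same route as the paper's: the closed form for $\dkdxk\phiOy$ from \cref{lem:phi-oy-derivatives}, the uniform bound $\abs{V_{\psi,j}(x)}\le 1$, and \cref{bound-on-jump-mag-of-psi-for-omega-y} to control the at most $d+2$ jump magnitudes. The only difference is that you actually justify $\abs{V_{\psi,j}(x)}\le 1$ via the Fourier expansion of the periodized Bernoulli polynomial (getting $\zeta(j+1)/\pi\le\pi/6$ for $j\ge 1$), a step the paper's proof uses silently when it drops the factor $\abs{V_{\psi,l}(x)}$ from the sum.
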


\begin{proof}
    \begin{equation*}
        \begin{split}
            \ddx\phiOy(x) &=\ddx\left( \sum_{l=0}^{d}{\Aoyl \Vpsi(x)} \right) = \sum_{l=0}^{d}{\ddx\left( \Aoyl \Vpsi(x) \right)} \\
            &= \sum_{l=0}^{d}{\Aoyl\ddx\left(\Vpsi(x) \right)} \\
            &= \sum_{l=0}^{d}{\Aoyl\ddx\left( -\frac{(2\pi)^l}{(l+1)!}B_{l+1} \left( \frac{x+\pi}{2\pi} \right) \right)} \\
            &= \sum_{l=0}^{d}{\Aoyl\left( -\frac{(2\pi)^{l-1}}{(l)!}B_{l} \left( \frac{x+\pi}{2\pi} \right) \right)} \\
            &= \sum_{l=0}^{d}{\Aoyl V_{\psi,l-1}(x)} = -\frac{\Aoylarg[0]}{2\pi} + \sum_{l=1}^{d}{\Aoyl V_{\psi,l-1}(x)} \\
            &= -\frac{\Aoylarg[0]}{2\pi} + \sum_{l=0}^{d-1}{\Aoylarg[l+1] V_{\psi,l}(x)}.
        \end{split}
    \end{equation*}

    Computing the second derivative we have:
    \begin{equation*}
        \begin{split}
            \frac{d^2}{dx^2}\phiOy(x) &=\ddx\left( -\frac{\Aoylarg[0]}{2\pi} + \sum_{l=0}^{d-1}{\Aoylarg[l+1] V_{\psi,l}(x)} \right) \\
            &=-\frac{\Aoylarg[1]}{2\pi} + \sum_{l=1}^{d-1}{\Aoylarg[l+1] V_{\psi,l-1}(x)} \\
            &= -\frac{\Aoylarg[1]}{2\pi} + \sum_{l=0}^{d-2}{\Aoylarg[l+2] V_{\psi,l}(x)}.
        \end{split}
    \end{equation*}

    By induction we can show that:

    \begin{equation}
        \label{eq:a.15}
        \dkdxk\phiOy(x) =
        \begin{dcases}
            -\frac{\Aoylarg[k-1]}{2\pi} + \sum_{l=0}^{d-k}{\Aoylarg[l+k] V_{\psi,l}(x)} ,&\text{if } k \leq d \\
            -\frac{\Aoylarg[d]}{2\pi} ,&\text{if } k = d +1 \\
            0,              &\text{otherwise}
        \end{dcases}
    \end{equation}

    With the above we now can estimate
    \begin{align*}
        \begin{split}
            \abs{\dkdxk\phiOy(x)} &\leq \abs{-\frac{\Aoylarg[k-1]}{2\pi} + \sum_{l=0}^{d-k}{\Aoylarg[l+k] V_{\psi,l}(x)}} \\
            &\leq \frac{\abs{\Aoylarg[k-1]}}{2\pi} + \sum_{l=0}^{d-k}{\abs{\Aoylarg[l+k]} \abs{V_{\psi,l}(x)}} \\
            &\leq \frac{\abs{\Aoylarg[k-1]}}{2\pi} + \sum_{l=0}^{d-k}{\abs{\Aoylarg[l+k]}} \\
            &\leq \abs{\Aoylarg[k-1]} + \sum_{l=0}^{d-k}{\abs{\Aoylarg[l+k]}} = \sum_{l=k-1}^{d}{\abs{\Aoylarg[l]}}.
        \end{split}
    \end{align*}
    Using~\cref{bound-on-jump-mag-of-psi-for-omega-y} we conclude the proof:
    \begin{gather*}
        \abs{\dkdxk\phiOy(x)} \leq 2\left( d+2 \right) \Big( B_{F} + \Admxi A_x \abs{\oy}^d \Big).\qedhere
    \end{gather*}
\end{proof}

\begin{proof}[Proof of ~\cref{bound-for-gamma-omega-y-derivatives}]
From~\cref{eq:decomposition-psi-omega-y} we have:
\begin{equation*}
    \ddxk[k+1]\left( \gamma_{\omega_y} \right) =\frac{d^{k+1}}{dx^{k+1}} \left( \psiOy \right) - \ddxk[k+1]\left( \phiOy \right)
\end{equation*}

From~\cref{bound-for-derivative-of-psi-omega-y,bound-for-phi-omega-y-derivatives} we have:
\begin{equation*}
    \begin{split}
    &\abs{\ddxk[k+1]\left( \gammaOy \right)} \leq \abs{\frac{d^{k+1}}{dx^{k+1}}\left( \psiOy \right)} + \abs{\frac{d^{k+1}}{dx^{k+1}}
    \left( \phiOy \right)}\leq\\
    &\leq B_{F} + \Admxi A_x \abs{\oy}^d + 2(d+2)\Big( B_{F} + \Admxi A_x \abs{\oy}^d \Big) \\
    &\leq \left(2d + 5\right) \left(B_{F} + \Admxi A_x \abs{\oy}^d \right)
    \end{split}.
\end{equation*}
Therefore,
\begin{equation}
    \label{bound-for-gamma-omega-y-for-psi}
    \abs{\ddxk[k+1]\left( \gamma_{\oy} \right)} \leq \left(2d + 5\right) \left(B_{F} + \Admxi A_x \abs{\oy}^d \right),
\end{equation}
completing the proof of ~\cref{bound-for-gamma-omega-y-derivatives}.
\end{proof}

\subsection{Condition for  discontinuous $\psiOy$}\label{subsec:proof-of-prop-18}
Now we demonstrate that $\psiOy$ may generically have a discontinuity at the domain boundary  $(x=\pm\pi)$. We focus on a simple example where the discontinuity curve is $\xi(x)=x$. Substitution into \cref{derivative-of-psi} entails:
\begin{gather*}
    \begin{split}
        2\pi \frac{d^{k+1}}{dx^{k+1}}\psiOy(x) &= \sum_{l=0}^{k}{\ddxk[l]\left( e^{-\imath\oy x} \Al[k-l] \right)} + \\
        &\int_{-\pi}^{x} e^{-\imath \oy y} \frac{\partial^{k+1}}{\partial x^{k+1}} F(x,y) \, dy +
        \int_{x}^{\pi} e^{-\imath \oy y} \frac{\partial^{k+1}}{\partial x^{k+1}} F(x,y) \, dy.
    \end{split}
\end{gather*}
Using Leibniz general rule~\cite{olver1993applications} and $e^{-\imath \oy \pi} = e^{\imath \oy \pi} = (-1)^{\oy},\; \oyInZ $ we have:
\begin{gather*}
    \begin{split}
        2\pi \frac{d^{k+1}}{dx^{k+1}}\psiOy(x) &= (-1)^{\oy} \sum_{l=0}^{k} \sum_{m=0}^{l} \binom{l}{m} (-\imath\oy)^{l-m}A_{k-l}^{(m)}(x)+ \\
        &\int_{-\pi}^{x} e^{-\imath \oy y} \frac{\partial^{k+1}}{\partial x^{k+1}} F(x,y) \, dy +
        \int_{x}^{\pi} e^{-\imath \oy y} \frac{\partial^{k+1}}{\partial x^{k+1}} F(x,y) \, dy.
    \end{split}
\end{gather*}

Using~\cref{psi-oy-jump-mag-def} we have:
\begin{gather*}
    \begin{split}
        2\pi A_{k+1}^{\psi}(\oy) &= (-1)^{\oy} \sum_{l=0}^{k} \sum_{m=0}^{l} \binom{l}{m} (-\imath\oy)^{l-m} A_{k-l}^{(m)}(-\pi) \\
        &\qquad\qquad + \int_{-\pi}^{-\pi} e^{-\imath \oy y} \frac{\partial^{k+1}}{\partial x^{k+1}} F(x,y)\Big\vert_{x=-\pi} \, dy \\
        &\qquad\qquad - (-1)^{\oy}\sum_{l=0}^{k} \sum_{m=0}^{l} \binom{l}{m} (-\imath\oy)^{l-m}A_{k-l}^{(m)}(\pi) \\
        &\qquad\qquad - \int_{-\pi}^{\pi} e^{-\imath \oy y} \frac{\partial^{k+1}}{\partial x^{k+1}} F(x,y)\Big\vert_{x=\pi} \, dy\\
        &=(-1)^{\oy} \sum_{l=0}^{k} \sum_{m=0}^{l} \binom{l}{m} (-\imath\oy)^{l-m}
        \left(A_{l}^{(m)}(-\pi) - A_{l}^{(m)}(\pi)\right)+ \\
        &\int_{-\pi}^{\pi} e^{-\imath \oy y} \left( \frac{\partial^{k+1}}{\partial x^{k+1}} F(x,y)\Big\vert_{x=-\pi} -
        \frac{\partial^{k+1}}{\partial x^{k+1}} F(x,y)\Big\vert_{x=\pi} \right) \, dy.
    \end{split}
\end{gather*}
Denoting:
\begin{gather*}
    \frac{\partial^{k+1}}{\partial x^{k+1}} F(x,y) \coloneqq F_{x}^{(k+1)}(x,y)
\end{gather*}
we get:
\begin{gather*}
    \begin{split}
        2\pi A_{k+1}^{\psi}(\oy) &= (-1)^{\oy} \sum_{l=0}^{k} \sum_{m=0}^{l} \binom{l}{m} (-\imath\oy)^{l-m}
        \left(A_{l}(-\pi) - A_{l}(\pi)\right)+ \\
        &\int_{-\pi}^{\pi} e^{-\imath \oy y} \left(F_{x}^{(k+1)}(-\pi,y) - F_{x}^{(k+1)}(\pi,y) \right) \, dy.
    \end{split}
\end{gather*}
Now if $\psiOy$ has $ \doy $ derivatives then in order to have $A_{k+1}^{\psi}(\omega_y)=0$ for each $0\leq k\leq \doy-1$ we must have:
\begin{gather*}
    \begin{split}
        (-1)^{\oy} \sum_{l=0}^{k} &\sum_{m=0}^{l} \binom{l}{m} (-\imath\oy)^{l-m} \left(A_{l}^{(m)}(-\pi) - A_{l}^{(m)}(\pi)\right)+ \\
        &\int_{-\pi}^{\pi} e^{-\imath \oy y} \left(F_{x}^{(k+1)}(-\pi,y) - F_{x}^{(k+1)}(\pi,y) \right) \, dy = 0.
    \end{split}
\end{gather*}
If we assume that all derivatives $F_x^{l}$, $l=0,\dots,l$ are periodic (in $x$) for all $y$, then we have:
\begin{gather*}
    \int_{-\pi}^{\pi} e^{-\imath \oy y} \left(F_{x}^{(k+1)}(-\pi,y) - F_{x}^{(k+1)}(\pi,y) \right) \, dy = 0
\end{gather*}
which leaves us with:
\begin{gather*}
   A_{k}^{\psi}(\oy) = 0 \Leftrightarrow  \sum_{l=0}^{k} \sum_{m=0}^{l} \binom{l}{m} (-\imath\oy)^{l-m} \left(A_{l}^{(m)}(-\pi) - A_{l}^{(m)}(\pi)\right) = 0.
\end{gather*}
Denoting:
\begin{gather*}
    \trilm[l]{m}\coloneqq A_{l}^{(m)}(-\pi)-A_{l}^{(m)}(\pi)
\end{gather*}
we claim:
\begin{gather*}        A_{k+1}^{\psi}(\oy) = 0 \Leftrightarrow \sum_{l=0}^{k} \sum_{m=0}^{l} \binom{l}{m} \trilm[l]{m} (-\oy)^{l-m} = 0.
\end{gather*}
Also notice that:
\begin{align*}
    A_{1}^{\psi}(\oy) &= \trilm[0]{0} \\
    A_{2}^{\psi}(\oy) &= \trilm[0]{0} + \trilm[1]{0}(-\imath\oy) + \trilm[1]{1} = A_{1}^{\psi}(\oy) + \trilm[1]{0}(-\imath\oy) + \trilm[1]{1}\\
    A_{3}^{\psi}\oy) &= \trilm[0]{0} + \trilm[1]{0}(-\imath\oy) + \trilm[1]{1} + \trilm[2]{0}(-\imath\oy)^{2} + \trilm[2]{1}(-\imath\oy) + \trilm[2]{2} \\
    &= A_{2}^{\psi}(\oy) + \trilm[2]{0}(-\imath\oy)^{2} + \trilm[2]{1}(-\imath\oy) + \trilm[2]{2} \\
    &\;\; \vdots \\
    A_{k+1}^{\psi}(\oy) &= A_{k}^{\psi}(\oy) + \sum_{m=0}^{k} \trilm[k]{m}\binom{k}{m}(-\imath\oy)^{k-m}
\end{align*}

So in order for $A_{k}^{\psi}(\oy) = 0$ to take place we would have to demand that $A_{k}^{\psi}(\oy) = -\sum_{m=0}^{k}
\trilm[k]{m}\binom{k}{m}(-\imath\oy)^{k-m} = \sum_{m=0}^{k} \trilm[k]{m}(-1)^{k-m+1}\binom{k}{m}(-\imath\oy)^{k-m}$ and by denoting:
\begin{align*}
    \trivec &\coloneqq \left( \trilm[k]{0},\trilm[k]{1},\ldots,\trilm[k]{k-1},\trilm[k]{k} \right) \\
    \omegavec &\coloneqq \left( (-1)^{k+1}\binom{k}{0}(\imath\oy)^{k}, (-1)^{k}\binom{k}{1}(\imath\oy)^{k-1}, \ldots,
    (-1)^{2}\binom{k}{k-1}(\imath\oy)^{k}, -1 \right)
\end{align*}
we finally write a necessary and sufficient condition for the absence of the boundary discontinuity of $\psiOy$:
\begin{gather*}
    A_{k+1}^{\psi}(\oy) = 0 \iff A_{k}^{\psi}(\oy) = \trivec[k] \cdot \omegavec[k].
\end{gather*}
Suppose $A_{0}^{\psi}(\oy) = \ldots = A_{k}^{\psi}(\oy) = 0$ then for $A_{k+1}^{\psi}(\oy) = 0$ we need
$\trivec[k] \perp \omegavec[k]$. In more detail $A_{0}^{\psi}(\oy) = 0$ implies
$A_{1}^{\psi}(\oy) = 0 \iff \trivec[0] \perp \omegavec[0]$ and then $A_{2}^{\psi}(\oy) = 0 \iff \trivec[1] \perp \omegavec[1]$ and so on.

The above is summarized in the following proposition:
\begin{proposition}
    \label{case-for-discontinuity-for-psi}
    Let $F(x,y)$ as in~\cref{description-of-F-and-xi} and $\psiOy$ as in~\cref{def-of-psi-omega-y,psi-oy-properties}.
    Assume that $\frac{d^{k}}{dx^{k}}F(x,y)\Big\vert_{x=x_0+2\pi n} = \frac{d^{k}}{dx^{k}}F(x,y)\Big\vert_{x=x_0},\quad n\in \mathbb{Z}$, and
    $A_{0}(\oy)=0$. Then:
    \begin{gather*}
        A_{k}^{\psi}(\oy) = 0 \iff \trivec[k-1] \perp \omegavec[k-1],\quad k = 1,\ldots, \doy.
    \end{gather*}
\end{proposition}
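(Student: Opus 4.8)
The plan is to specialize the general derivative formula \eqref{derivative-of-psi} to the model discontinuity curve $\xi(x)=x$ and then evaluate it at the two domain endpoints $x=\pm\pi$ to read off the jump magnitudes directly. First I would substitute $\xi(x)=x$, $\xi^{(1)}\equiv 1$ into \eqref{derivative-of-psi}, obtaining
\[
2\pi\,\psiOy^{(k+1)}(x)=I_{\oy,k+1}(x)+\sum_{l=0}^{k}\ddxk[l]\bigl(e^{-\imath\oy x}A_{k-l}(x)\bigr),
\]
and then expand each summand by the general Leibniz rule as $e^{-\imath\oy x}\sum_{m=0}^{l}\binom{l}{m}(-\imath\oy)^{l-m}A_{k-l}^{(m)}(x)$, pulling the common factor $e^{-\imath\oy x}$ out of the double sum.

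Next I would form the jump magnitude $\Aoylarg[k+1]=\psiOy^{(k+1)}(-\pi)-\psiOy^{(k+1)}(\pi)$ as in \eqref{psi-oy-jump-mag-def}. Since $e^{\imath\oy\pi}=e^{-\imath\oy\pi}=(-1)^{\oy}$ for $\oyInZ$, the exponential prefactor contributes the same scalar $(-1)^{\oy}$ at both endpoints, so the algebraic part of the difference collapses—after relabelling the outer index $k-l\mapsto l$—into a finite double sum of the quantities $A_l^{(m)}(-\pi)-A_l^{(m)}(\pi)$, while the contribution of $I_{\oy,k+1}$ becomes $\int_{-\pi}^{\pi}e^{-\imath\oy y}\bigl(F_x^{(k+1)}(-\pi,y)-F_x^{(k+1)}(\pi,y)\bigr)\,dy$. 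Here I would invoke the periodicity hypothesis $\frac{d^{k}}{dx^{k}}F(x,y)\Big\vert_{x=x_0+2\pi n}=\frac{d^{k}}{dx^{k}}F(x,y)\Big\vert_{x=x_0}$, which forces $F_x^{(k+1)}(-\pi,y)=F_x^{(k+1)}(\pi,y)$ pointwise in $y$ and hence annihilates this integral identically.

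With the integral gone, I would abbreviate $\trilm[l]{m}:=A_l^{(m)}(-\pi)-A_l^{(m)}(\pi)$ and peel off the top slice $l=k$ of the remaining double sum to obtain the recursion
\[
\Aoylarg[k+1]=\Aoylarg[k]+\sum_{m=0}^{k}\binom{k}{m}\trilm[k]{m}(-\imath\oy)^{k-m},
\]
after cancelling the common factor $(-1)^{\oy}$; the seed is the standing hypothesis $\Aoylarg[0]=0$. Absorbing the sign $(-1)^{k-m+1}$ coming from $-(-\imath\oy)^{k-m}$ into the definition of $\omegavec[k]$ and collecting the $\trilm[k]{m}$ into $\trivec[k]$, the recursion reads $\Aoylarg[k+1]=0\iff\Aoylarg[k]=\trivec[k]\cdot\omegavec[k]$. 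An induction on $k$ then finishes the proof: $\Aoylarg[0]=0$ gives $\Aoylarg[1]=0\iff\trivec[0]\perp\omegavec[0]$, and assuming $\Aoylarg[0]=\dots=\Aoylarg[k]=0$ the recursion gives $\Aoylarg[k+1]=0\iff\trivec[k]\perp\omegavec[k]$; relabelling $k\mapsto k-1$ is exactly the statement of \cref{case-for-discontinuity-for-psi}.

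The mathematical content is light—everything is an exact differentiation followed by endpoint evaluation—so the only real care needed is bookkeeping: tracking the Leibniz coefficients and powers of $-\imath\oy$, performing the index relabelling $k-l\mapsto l$ consistently, and absorbing the alternating signs into $\omegavec[k]$. The one genuinely substantive point, and the reason this example is worth recording, is that the periodicity assumption is \emph{precisely} what makes the boundary integral vanish; without it $\psiOy$ generically carries a nonzero jump at $x=\pm\pi$, in accordance with \cref{psi-oy-properties}.
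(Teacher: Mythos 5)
Your proposal follows essentially the same route as the paper: substitute $\xi(x)=x$ into \eqref{derivative-of-psi}, expand by the Leibniz rule, evaluate at $x=\pm\pi$ using $e^{\mp\imath\oy\pi}=(-1)^{\oy}$, kill the boundary integral via the periodicity hypothesis, and then read the resulting recursion $\Aoylarg[k+1]=\Aoylarg[k]+\sum_{m=0}^{k}\binom{k}{m}\trilm[k]{m}(-\imath\oy)^{k-m}$ as the orthogonality condition $\trivec[k]\perp\omegavec[k]$ by induction from $\Aoylarg[0]=0$. This matches the paper's argument step for step (including the same index relabelling of the double sum), so the proposal is correct as given.
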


\end{document}